\renewcommand{\d}{\ensuremath{\mathrm{d}}}
\newcommand{\R}{\ensuremath{\mathbb{R}}}
\newcommand{\N}{\ensuremath{\mathbb{N}}}
\newcommand{\TT}{\ensuremath{\mathcal T}}
\newcommand{\II}{\ensuremath{\mathcal I}}
\newcommand{\JJ}{\ensuremath{\mathcal J}}
\newcommand{\SSS}{\ensuremath{\mathcal S}}
\newcommand{\eps}{\ensuremath{\varepsilon}}
\newcommand{\verti}[1]{\ensuremath{\left\lvert #1 \right\rvert}}
\newcommand{\vertii}[1]{\ensuremath{\left\lVert #1 \right\rVert}}
\newcommand{\vertiii}[1]{{\left\lvert\kern-0.25ex\left\lvert\kern-0.25ex\left\lvert #1
    \right\rvert\kern-0.25ex\right\rvert\kern-0.25ex\right\rvert}}
\renewcommand{\d}{\ensuremath{{\rm d}}}
\newcommand{\ignore}[1]{}
\newtheorem{theorem}{Theorem}[section]
\newtheorem{proposition}[theorem]{Proposition}
\newtheorem{lemma}[theorem]{Lemma}
\newtheorem{corollary}[theorem]{Corollary}
\numberwithin{equation}{section}
\def\R{\mathbb{R}}
\def\N{\mathbb{N}}
\def\I{\infty}
\def\txtc{{\textnormal{c}}}
\def\txtd{{\textnormal{d}}}
\def\txte{{\textnormal{e}}}
\def\txts{{\textnormal{s}}}
\def\txtu{{\textnormal{u}}}
\def\txtD{{\textnormal{D}}}
\newcommand{\be}{\begin{equation}}
\newcommand{\ee}{\end{equation}}
\newcommand{\bea}{\begin{eqnarray}}
\newcommand{\eea}{\end{eqnarray}}
\newcommand{\beann}{\begin{eqnarray*}}
\newcommand{\eeann}{\end{eqnarray*}}
\newcommand{\benn}{\begin{equation*}}
\newcommand{\eenn}{\end{equation*}}
\def\ra{\rightarrow}
\def\I{\infty}
\newcommand{\cB}{{\mathcal B}}  
\newcommand{\cF}{{\mathcal F}}  
\newcommand{\cG}{{\mathcal G}}  
\newcommand{\cK}{{\mathcal K}}  
\newcommand{\cN}{{\mathcal N}}  
\newcommand{\cO}{{\mathcal O}}  
\newcommand{\cT}{{\mathcal T}}  
\begin{document}
%
\title[A dynamical systems approach for the contact-line singularity]{A dynamical 
systems approach for the contact-line singularity in thin-film flows}
\keywords{Thin film equation, self-similar solution, contact line, center manifolds, 
resonances, boundary-value problem.}
\subjclass[2010]{76A20, 37N10, 35K25, 35K65, 34B16.}
\thanks{The authors are grateful for discussions with Lorenzo Giacomelli, Hans Kn\"upfer, and Felix Otto. MVG acknowledges financial support for precedent research provided by the International Max Planck Research School (IMPRS) of the Max Planck Institute for Mathematics in the Sciences (MIS) in Leipzig. MVG thanks the University of Monastir and the Vienna University of Technology for the kind hospitality. MVG was partially supported by Fields Institute for Research in Mathematical Sciences in Toronto and the National Science Foundation under Grant No.~NSF DMS-1054115. CK would like to thank the Austrian Academy of Science (\"OAW) for support via an APART Fellowship and the EU/REA for support via a Marie-Curie Integration Re-Integration Grant.}
\date{\today}
\author{Fethi Ben~Belgacem}
\address{UR Analyse Non-Lin\'eare et G\'eometrie (UR13ES32), Universit\'e de Monastir, Institut\linebreak
Sup\'erieur d'Informatique et de Math\'ematiques de Monastir, Avenue de la Corniche, B.P.~223, Monastir 5000, Tunisie}
\email{fethi.benbelgacem@fst.rnu.tn, belgacem.fethi@gmail.com}
\author{Manuel V. Gnann}
\address{University of Michigan, Department of Mathematics, 2074 East Hall, 530 Church Street, 
Ann Arbor, MI 48109-1043, United States}
\email{mvgnann@umich.edu}
\author{Christian Kuehn}
\address{Vienna University of Technology, Institute for Analysis and Scientific Computing, 
Wiedner Hauptstr. 8-10, 1040 Vienna, Austria}
\email{ck274@cornell.edu}
\begin{abstract}
We are interested in a complete characterization of the contact-line singularity of thin-film flows for zero and nonzero contact angles. By treating the model problem of source-type self-similar solutions, we demonstrate that this singularity can be understood by the study of invariant manifolds of a suitable dynamical system. In particular, we prove regularity results for singular expansions near the contact line for a wide class of mobility exponents and for zero and nonzero dynamic contact angles. Key points are the reduction to center manifolds and identifying resonance conditions at equilibrium points. The results are extended to radially-symmetric source-type solutions in higher dimensions. Furthermore, we give dynamical systems proofs for the existence and uniqueness of self-similar droplet solutions in the nonzero dynamic contact-angle case. 
\end{abstract}
\maketitle
\tableofcontents
%

\section{Introduction}
In this paper, we study the thin-film equation~\cite{Bertozzi}
\begin{subequations}\label{source_problem}
\begin{equation}\label{tfe}
\partial_t h + \partial_z \left(h^n \partial_z^3 h\right) = 0 \quad 
\mbox{for } \, (t,z) \in \{h > 0\}
\end{equation}
subject to the initial condition
\begin{equation}\label{initial}
\lim_{t \searrow 0} h = M \delta_0 \quad \mbox{in } \, \mathcal D^\prime(\R),
\end{equation}
\end{subequations}
where $M > 0$ is a constant (the mass of the droplet), $\mathcal{D}^\prime(\R)$ is the space of distributions, i.e., the dual space to the space of compactly supported test functions, and $\delta_0$ the Dirac distribution in $z = 0$. The function $h = h(t,z)$ describes the height of a viscous thin film as a function of time $t > 0$ and position $z \in \R$~\cite{beimr.2009,odb.1997}. The parameter $n$, the \emph{mobility exponent}, may be chosen as $n \in (0,3)$, as for $n \le 0$ the speed of propagation is infinite and non-negativity of $h$ is not ensured, whereas for $n \ge 3$ the boundary of the film cannot move (\emph{no-slip paradox})~\cite{dd.1974,hs.1971,m.1964}. Indeed, for $n \in \left(0,3\right)$, non-negativity of $h$ is preserved (cf.~\cite{bf.1990}) and fluid films move with finite and in general non-vanishing speed; see also Figure~\ref{fig01}.
\begin{figure}[htbp]
\psfrag{h}{$h(t,z)$}
\psfrag{x}{$z$}
\psfrag{t0}{$0<t\ll1$}
\psfrag{t1}{$t\gg1$}
	\centering
		\includegraphics[width=0.5\textwidth]{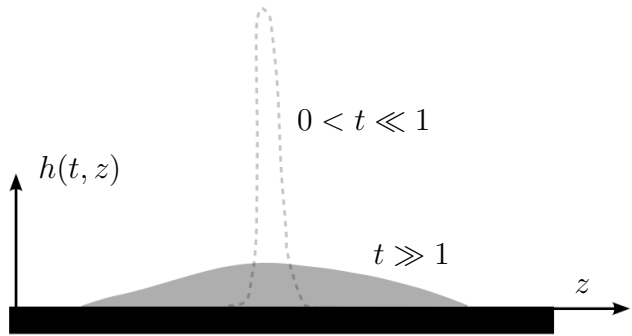}
		\caption{Sketch of the main geometry for a thin film (grey) on a solid (black). The source-type ($\delta$-distribution) initial condition is indicated after a short positive time evolution (dashed grey) while a droplet-type solution is expected to be reached in many cases after longer times (solid grey). We remark that the mass and center of mass are conserved.}
		\label{fig01}
\end{figure}  

Formally, one may view the thin-film equation \eqref{tfe} as an energy-driven flow
\begin{equation}\label{tfe_energy}
\partial_t h = \partial_z\left(M(h)\partial_z\left[\textnormal{D}_hE(h)\right]\right)
\end{equation}
where $E$ denotes the energy, $\textnormal{D}_h$ is the variational derivative, and $M(h)$ a general mobility. For viscous flow purely driven by \emph{surface tension} the basic energy functional is the arc length $\int \sqrt{1+(\partial_z h)^2}~\textnormal{d}z$, where integration is understood over the support of $h$. As the film is \emph{thin}, one uses an approximation of the energy functional and sets $E=\frac12\int (\partial_z h)^2~\textnormal{d}z$. Assuming $M(h)=h^n$ and using $E$ implies that~\eqref{tfe_energy} reduces to~\eqref{tfe}. More precisely, one may derive~\eqref{tfe} by means of a \emph{lubrication approximation}~\cite{go.2002,km.2013,km.2015} directly from the Navier-Stokes equations. Physical situations of interest, where thin-film flows occur are Darcy's flow in the Hele-Shaw cell with mobility exponent $n = 1$ or the Navier-Stokes equations with (non-)linear slip for general $n \in (0,3)$~(see {e.g.}~\cite{GoldsteinPesciShelley,AlmgrenBertozziBrenner}).

\medskip 

Problem~\eqref{source_problem} has been studied by Bernis, Peletier, and Williams in \cite{bpw.1992} under the assumption that solutions have the (mass-conserving) self-similar form
\begin{equation}\label{self-similar}
h(t,z) = t^{-\frac{1}{n+4}} H(Z) \quad \mbox{with } \, Z = t^{-\frac{1}{n+4}} z,
\end{equation}
such that $H \in C^1(\R) \cap C^3\left(\{H > 0\}\right) \cap L^1(\R)$ and $H^n \frac{\d^3 H}{\d Z^3} \in C^1\left(\{H > 0\}\right)$. Solutions to~\eqref{tfe} in general have finite speed of propagation~\cite{b.1996,b.1996.2} and this is in particular true for self-similar solutions of the form~\eqref{self-similar}. Then the solution has a free boundary, the contact line, where liquid, gas, and solid meet; see Figure~\ref{fig02}. The assumption $H \in C^1(\R)$ implies that the contact angle at this triple junction is zero, which is also known as \emph{complete wetting regime}, as the droplet generically wets the whole surface.
\begin{figure}[htbp]
\psfrag{air}{\scriptsize{gas}}
\psfrag{solid}{\scriptsize{solid}}
\psfrag{liquid}{\scriptsize{liquid}}
\psfrag{theta}{$\theta_0$}
	\centering
		\includegraphics[width=0.25\textwidth]{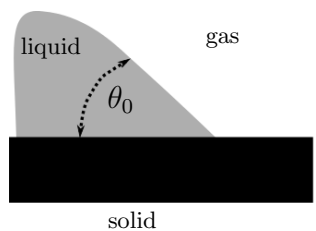}
		\caption{Sketch of the geometry near the contact line for a fixed time. The interface between solid (black), liquid (grey), and gas (white) is indicated. The contact angle $\theta_0$ is shown in this case for the situation with $\theta_0>0$. This figure can be viewed as a zoom near the right contact point for the droplet solution in Figure~\ref{fig01}.}
		\label{fig02}
\end{figure} 

We will assume a relaxed condition, i.e., 
\begin{equation}
H \in C^0(\R) \cap C^3\left(\{H > 0\}\right) \cap L^1(\R) \quad \text{and}\quad H^n \frac{\d^3 H}{\d Z^3} \in C^1\left(\{H > 0\}\right),
\end{equation}
that is, we also allow for nonzero contact angles. Note that this still corresponds to the complete wetting regime: The \emph{equilibrium microscopic contact angle} remains zero, but the \emph{dynamic macroscopic contact angle} is nonzero and relaxes in time; we refer to Hocking~\cite{h.1992} for a discussion of the difference between microscopic and macroscopic contact angles, and to more recent accounts in~\cite{re.2007,rhe.2010,re.2011,beimr.2009}. Nevertheless, the singularity of the solution at the boundary $\partial\{h > 0\}$ has very similar features as for general solutions with nonzero static microscopic contact angles (the \emph{partial wetting case}).
\medskip

The interest in the study of solutions of the form~\eqref{self-similar} arises from the insight that the long-time dynamics of compactly supported solutions to~\eqref{tfe} are commonly believed to be governed by the source-type self-similar profile. For $n = 1$, this was proven rigorously in~\cite{ct.2002} and refined results are contained for instance in~\cite{cu.2007,mms.2009,cu.2014,g.2015}. In particular in~\cite{g.2015}, one of the authors of this paper was able to prove that (in suitably chosen moving coordinates) arbitrarily many derivatives of $h$ converge to the source-type self-similar solution, which has the explicit form $H(Z) = C_1 \left(C_2 - Z^2\right)^2$ for $\verti{Z} \le C_2$ with $C_1, C_2 > 0$, first found by Smyth and Hill in~\cite{sh.1988}. For general mobility exponents, convergence was numerically observed by Bernoff and Witelski in~\cite{bw.2002}. A more recent numerical study was carried out by Peschka in~\cite{p.2015} with more precise control of the solution at the free boundary; in fact, there is a vast literature on numerical approaches to thin-film equations (see {e.g.}~\cite{BertozziMuenchFantonCazabat,DiezKondic,GruenRumpf,HerdeThieleHerminghausBrinkmann}).

\medskip

It is natural to consider the study of source-type self-similar solutions as a model problem for the treatment of general solutions to~\eqref{tfe}. In particular, the investigation of the aforementioned \emph{contact-line singularity} is considerably simpler than in the case of general solutions as techniques from ordinary differential equations (ODEs) are more directly applicable. The aim of this work is to demonstrate that we can treat this singularity by studying the invariant manifolds of suitable dynamical systems, as proposed and outlined in~\cite{ggo.2013,ggo.2015} by Giacomelli, Otto, and one of the authors of this paper, where the case $n \in \left(\frac 3 2, 3\right)$ for zero static contact angles was treated.

\medskip

The study of the regularity of solutions to~\eqref{tfe} at the free boundary has attracted increasing interest in the last years~\cite{ggko.2014,gk.2010,gko.2008,g.2015,j.2013,km.2015,k.2011,k.2015,km.2013}. Physically, the interest lies in a detailed understanding of the regularizing effect of various (nonlinear) slip conditions at the liquid-solid interface for the underlying fluid models. Mathematically, the boundary regularity is of interest as~\eqref{tfe} degenerates at $\partial\{h > 0\}$, i.e., uniform parabolicity is \emph{not} given. In fact, there are many related equations where uniform parabolicity is lost. This includes very recent topics, e.g., cross-diffusion models~\cite{JuengelKuehnTrussardi}, gradient flow approaches~\cite{DiFrancescoMatthes}, or approximations of the Boltzmann-Nordheim equation~\cite{JuengelWinkler}. The most classical case related to the thin-film equation is the \emph{second order porous-medium equation}, given by
\[
\partial_t h- \partial_z^2 (h^m) = 0 \quad \mbox{in } \, \{h > 0\},
\]
where $m > 1$ corresponds to $n-1$ in~\eqref{tfe}. However, the lack of a comparison principle for the \emph{fourth-order} thin-film equation makes the respective analysis more delicate. Compared to the porous-medium case, where solutions are smooth up to the boundary (cf.~Angenent~\cite{a.1988}), in the thin-film case -- unless $n = 1$ -- this is in general not true as proven for source-type solutions in~\cite{ggko.2014}, and for general solutions in a neighborhood of $n = 2$ in the complete wetting regime by Giacomelli, Kn\"upfer, Otto, and one of the authors of this paper in~\cite{ggko.2014,g.2015}. The partial wetting case was discussed by Kn\"upfer, and Kn\"upfer and Masmoudi, respectively, in~\cite{km.2015,k.2011,k.2015,km.2013,k.err.2016} covering the intervals $n \in \left(0,14/5\right) \setminus \{5/2, 8/3, 11/4\}$.

\medskip

The main results of this paper can be stated in a non-technical form as follows:
\begin{itemize}
 \item Consider the zero contact-angle case and mobility exponents $n\in(0,\frac32)$. Then the solution near the contact line can be expressed as a leading-order part with a remainder being real-analytic in suitable powers of the spatial variable (see Theorem~\ref{th:0n32_0}). The main insight of the proof is the existence of a degenerate real-analytic center manifold in suitable coordinates.
 \item For nonzero dynamic contact angles, we prove that for `most' mobility exponents in $n\in(0,3)$, the solution can again be expressed locally by a leading-order term with a real-analytic remainder in powers of the spatial variable (see Theorem~\ref{th:0n3_ne2}). 
 \item For certain mobility exponents ($n=3-1/m$ with $m\in\N$), we show that the real-analytic remainder contains also logarithmic terms (see Theorem~\ref{th:n2}). The reason is that resonances can occur for the flow on the center manifold.
 \item Furthermore, we prove existence results of certain global self-similar profiles using a shooting argument within a boundary manifold formulation and use a direct argument to establish uniqueness (cf.~\S\ref{sec:global} for details).
 \item We conclude the paper with a discussion of how our analysis transfers to radially-symmetric source-type self-similar solutions with zero contact angle in higher dimensions (cf.~App.~\ref{app:higher}).
\end{itemize}
In summary, we improve the understanding of contact-line singularities by identifying suitable degenerate and non-degenerate invariant manifolds and flows on these manifolds. One may conjecture that a similar program to study regularity near contact-type singularities may also work for other classes of non-uniformly parabolic equations, as well as other classes of free-boundary problems.

\medskip

The rest of the paper is structured as follows: In Section~\ref{sec:transformations} we state several useful preliminary transformations, which bring the thin-film equation into a more convenient form. Section~\ref{sec:results} contains the technical statements of our main results. The proofs for the local analysis near the contact line are given in Section~\ref{sec:local} while the global existence and uniqueness proofs are provided in Section~\ref{sec:global}.

\section{Transformations}\label{sec:transformations}
We use ansatz~\eqref{self-similar} in~\eqref{tfe}, which converts the partial differential equation (PDE), into a  fourth-order ODE
\[
\frac{\d}{\d Z} \left(H^n \frac{\d^3 H}{\d Z^3}\right) = \frac{1}{n+4} \frac{\d}{\d Z} \left(Z H\right) \quad \mbox{in } \, \{H > 0\}.
\]
This equation can be integrated once and we obtain
\begin{subequations}\label{problem_z}
\begin{equation}\label{ode_third}
H^{n-1} \frac{\d^3 H}{\d Z^3} = \frac{Z}{n+4} \quad \mbox{in } \, \{H > 0\}.
\end{equation}
The integration constant vanishes as we know that there exists a free boundary. This is proven in~\cite[\S 3]{bpw.1992} for zero contact angles and also transfers in an apparent way to the case of nonzero contact angles. In~\cite[Th.~1.2]{bpw.1992} it was shown that for $n \in (0,3)$ and zero contact angle, there exists an even non-negative solution to~\eqref{ode_third} with compact support and furthermore, being even is a necessary condition for $0 < n \le 2$. We will assume that $H = H(Z)$ is even from the outset. The free boundary is located at positions $\pm Z_0$ with $Z_0 > 0$. Then necessarily it follows that
\begin{equation}\label{bcz1}
H = 0 \quad \mbox{at } \, Z = \pm Z_0,
\end{equation}
which merely defines $Z_0$. The second condition determines the contact angle, i.e.,
\begin{equation}\label{bcz2}
\frac{\d H}{\d Z} = \mp \Theta \quad \mbox{at } \, Z = \pm Z_0,
\end{equation}
\end{subequations}
where $\Theta \ge 0$ is a constant. This fixes the time evolution of the slope of $h$ at the free boundary to be $\mp t^{-\frac{2}{n+4}} \Theta$.

\medskip

Problem~\eqref{problem_z} contains four boundary conditions~\eqref{bcz1} and~\eqref{bcz2} for the third-order ODE \eqref{ode_third}, thus seemingly being over-determined. Yet, by our symmetry assumption it suffices to impose $\frac{\d H}{\d Z} = 0$ at $Z = 0$ and by reflection to reduce problem~\eqref{problem_z} to solving
\begin{subequations}\label{problem_z_alt}
\begin{align}
H^{n-1} \frac{\d^3 H}{\d Z^3} &= \frac{Z}{n+4} \quad \mbox{for } \, Z \in \left(-Z_0,0\right),\label{ode_third_alt}\\
\left(H,\frac{\d H}{\d Z}\right) &= \left(0,\Theta\right) 
\quad \mbox{at } \, Z = - Z_0,\\
\frac{\d H}{\d Z} &= 0 \quad \mbox{at } \, Z = 0.
\end{align}
\end{subequations}
By rescaling $Z$ and $H$, we may assume that the pre-factor $(n+4)^{-1}$ in~\eqref{ode_third_alt} can be removed and the free boundary is located at $- Z_0 = -1$. Using the shifted variable $x := Z + 1$, we are thus lead to consider the problem
\begin{subequations}\label{problem_x}
\begin{align}
H^{n-1} \frac{\d^3 H}{\d x^3} &= -1+x \quad \mbox{for } \, x \in \left(0,1\right),\label{ode_third2}\\
\left(H,\frac{\d H}{\d x}\right) &= \left(0,\theta\right) \quad 
\mbox{at } \, x = 0,\label{bcx1}\\
\frac{\d H}{\d x} &= 0 \quad \mbox{at } \, x = 1,\label{bcx2}
\end{align}
\end{subequations}
with a constant $\theta \ge 0$. The rest of this paper will concentrate on the discussion of~\eqref{problem_x}.

\section{Main Results and Discussion}\label{sec:results}

\subsection{Notation}
For real-valued functions $f, g$ we write $f \lesssim g$ or $g \gtrsim f$, whenever a constant $C > 0$, only depending on the mobility exponent $n$, exists with $f \le C g$. A dependence of $C$ on other parameters $\omega$ is usually indicated by a subscript, i.e., we write $f \lesssim_\omega g$ or $g \gtrsim_\omega f$. We also say that a property $P$ is true for all $x \gg 1$ or $x \ll 1$, whenever a constant $C > 0$ exists such that $P$ is true for all $x \ge C$ or $x \le C$, respectively. A dependence on parameters is specified analogously. The symbol $\N = \{1,2,3,\ldots\}$ denotes the natural numbers and we write $\N_0 := \N \cup \{0\}$.

\subsection{Main Results}
Existence and uniqueness of solutions to \eqref{problem_x} for $\theta = 0$ in the class $H \in C^1([0,1]) \cap C^3((0,1))$ were proven in~\cite{bpw.1992} and the boundary regularity for $n \in (3/2,3)$ was addressed and discussed exhaustively in \cite{ggo.2013}. Indeed, there it was shown that
\[
H = \mu x^{\frac 3 n} \left(1 + v\left(x,x^\beta\right)\right) \quad \mbox{for } \, 0 \le x \ll 1,
\]
where $\mu > 0$ is an $n$-dependent constant, $\beta \in (0,1)$ is an in general irrational $n$-dependent number, and $v = v(x_1,x_2)$ is an analytic function in a neighborhood of $(x_1,x_2) = (0,0)$. The leading-order asymptotic $H = \mu x^{\frac 3 n} (1 + o(1))$ as $x \searrow 0$ was already proven in~\cite{bpw.1992}. For $n \in (0,3/2)$ this asymptotic is given by
\begin{equation}\label{bpw_0n32}
H = \kappa x^2 (1 + o(1)) \quad \mbox{as } \, x \searrow 0,
\end{equation}
with another $n$-dependent constant $\kappa > 0$. Here, we are able to show:
\begin{theorem}\label{th:0n32_0}
Suppose $n \in (0,3/2)$, $\theta = 0$, and $H>0$ for $x\in(0,1]$. Then the unique solution $H$ of problem~\eqref{problem_x} fulfills the asymptotic
\begin{equation}\label{power_0n32}
H = \kappa x^2 \left(1 + v\left(\kappa^{-n} x^{\beta+1},\kappa^{-n}x^\beta\right)\right) \quad \mbox{for } \, 0 \le x \ll 1,
\end{equation}
where $\kappa > 0$ is an $n$-dependent constant, $\beta = 3 - 2 n$, and $v = v(x_1,x_2)$ is an analytic and $\kappa$-independent function in a neighborhood of $(x_1,x_2) = (0,0)$, $v(0,0)=0$ but $v\not\equiv 0$ with an in general non-trivial dependence on both $x_1$ and $x_2$.
\end{theorem}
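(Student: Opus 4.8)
The plan is to cast problem~\eqref{problem_x} near $x=0$ as an autonomous dynamical system and to locate the solution on an invariant manifold through the degenerate equilibrium corresponding to~\eqref{bpw_0n32}. First I would introduce a logarithmic independent variable $s = \log x$ (so that $x = \txte^s \to 0$ as $s \to -\infty$) together with scaling-invariant dependent variables adapted to the expected leading order $H \sim \kappa x^2$; natural choices are quantities such as $u_1 = \frac{x H'}{H}$, $u_2 = \frac{x^2 H''}{H}$, $u_3 = \frac{x^3 H'''}{H}$, which render~\eqref{ode_third2} as a first-order system $\frac{\d u}{\d s} = F(u, \txte^{\gamma s})$ after solving for $H^{n-1}$; the inhomogeneity $-1+x$ forces the explicit $x$-dependence, but the term $x$ is a higher-order perturbation and the leading balance is $H^{n-1} H''' = -1$. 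The equilibrium of the unperturbed system corresponding to $H = \kappa x^2$ has $u = (2, 2, 0)$ (since $H' = 2\kappa x$, $H'' = 2\kappa$, $H''' = 0$), and the constraint $H^{n-1}H''' = -1$ together with $H\sim\kappa x^2$ fixes $\kappa$ in terms of $n$; here one uses $n < 3/2$ precisely so that $x^2$ (rather than $x^{3/n}$) is the dominant power and so that the relevant eigenvalue structure is the one below.

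Next I would linearize at this equilibrium. I expect the linearization to have eigenvalues whose real parts determine a splitting into stable/center/unstable directions as $s\to-\infty$; the exponent $\beta = 3 - 2n$ should emerge as (proportional to) the ratio governing the slowest nontrivial mode, i.e. the gap between the $x^2$ behaviour and the first correction $x^{2+\beta} = x^{3-n+\cdots}$. Because $n\in(0,3/2)$, $\beta\in(0,3)$, and I anticipate a one-dimensional center (or slow) direction associated with $\beta$ plus the explicit-time direction with rate tied to the perturbation $x$, while the remaining directions are strongly contracting toward the equilibrium. The key structural point, matching the paper's stated ``degenerate real-analytic center manifold,'' is that after blowing up / desingularizing one obtains an equilibrium at which the center manifold theory of Sijbrand or the analytic center-manifold results apply, yielding a \emph{real-analytic} local center manifold on which the flow is governed by the two resonance-monomials $x^{\beta+1}$ and $x^{\beta}$ (explaining the two arguments of $v$ in~\eqref{power_0n32}: the $x^\beta$ from the slow eigenvalue and the $x\cdot x^\beta = x^{\beta+1}$ from the coupling with the $-1+x$ perturbation). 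One then checks that the unique solution $H$ of~\eqref{problem_x} — whose existence, uniqueness and leading asymptotic~\eqref{bpw_0n32} are quoted from~\cite{bpw.1992} — corresponds, via its known behaviour as $x\searrow0$, to a trajectory that must lie on this center manifold (all faster modes are excluded by the asymptotic $H = \kappa x^2(1+o(1))$). Finally I would translate the convergent power-series parametrization of the center manifold back through the substitutions to recover $H = \kappa x^2(1 + v(\kappa^{-n}x^{\beta+1}, \kappa^{-n}x^\beta))$, read off the $\kappa$-scaling of the arguments from how $\kappa$ enters the change of variables, and verify $v(0,0)=0$, $v\not\equiv0$, with genuine dependence on both arguments by computing the first nonzero Taylor coefficients (the coefficient of $x^\beta$ is nonzero generically because the corresponding center eigenvector is excited, and the coefficient of $x^{\beta+1}$, or of $x$, is nonzero because the perturbation $-1+x$ is nontrivial).

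The main obstacle I expect is twofold. First, setting up coordinates in which the equilibrium is \emph{hyperbolic in the contracting directions with a well-isolated slow/center direction} and in which the right-hand side is \emph{analytic} (not merely smooth) — the degeneracy $H\to0$ makes the naive system singular, and one needs a careful blow-up or a clever choice of invariants (possibly using $x$ itself as one of the phase variables, as the paper hints with $v(x, x^\beta)$-type arguments) so that the analytic center-manifold theorem is applicable and so that $\beta = 3-2n$ is exactly the eigenvalue gap. Second, handling the resonance bookkeeping: one must verify that for $n\in(0,3/2)$ \emph{no} resonances occur of the type that would force logarithms (in contrast with the case $n = 3-1/m$ treated later in the paper), i.e. that $\beta$ and $1$ are such that the formal series in $x^\beta$ and $x^{\beta+1}$ never hits an obstruction; this is where the hypothesis $n<3/2$, rather than just $n<3$, is genuinely used, and it is the delicate arithmetic step that the center-manifold reduction is designed to make transparent.
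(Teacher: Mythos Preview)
Your overall strategy---reduce to an autonomous system in $s=\log x$ and locate the solution on an invariant manifold through the equilibrium---is right, and matches the paper. But several key identifications are off, and these are not cosmetic: they would prevent the argument from closing.

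\textbf{The leading balance and the role of $\kappa$.} You write that ``the constraint $H^{n-1}H'''=-1$ together with $H\sim\kappa x^2$ fixes $\kappa$ in terms of $n$.'' This is wrong: if $H=\kappa x^2$ then $H'''\equiv 0$, so there is no local constraint on $\kappa$. The constant $\kappa$ is a \emph{free parameter}, determined only by the global boundary condition at $x=1$. In the paper's coordinates $(x_1,x_2,F,F',F'')$ with $H=x^2F$, $x_1=e^{(4-2n)s}$, $x_2=e^{(3-2n)s}$, this manifests as a whole \emph{line of equilibria} $p_\kappa=(0,0,\kappa,0,0)$, $\kappa>0$. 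This line is the center manifold (eigenvalue $0$), and it is trivially analytic because it consists entirely of steady states. Your scaling-invariant coordinates $u_j=x^jH^{(j)}/H$ would quotient out $\kappa$ and obscure this structure.

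\textbf{Unstable, not center, manifold.} You say the solution ``must lie on this center manifold.'' It does not: the center manifold is the line of equilibria and carries no dynamics. Since the trajectory converges to $p_\kappa$ as $s\to-\infty$, it lies on the \emph{unstable} manifold $W^\txtu(p_\kappa)$. The linearization at $p_\kappa$ has eigenvalues $4-2n,\ 3-2n,\ 0,\ -1,\ -2$; for $n\in(0,3/2)$ the first two are positive, giving $\dim W^\txtu(p_\kappa)=2$. That is exactly where $n<3/2$ enters: it makes $3-2n>0$ an unstable eigenvalue. The exponents $\beta=3-2n$ and $\beta+1=4-2n$ in the theorem are simply the two unstable eigenvalues, not a ``slow/center direction associated with $\beta$'' coupled to a perturbation.

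\textbf{No resonance check is needed here.} Because $W^\txtu(p_\kappa)$ is the unstable manifold of an analytic vector field at a point with $F=\kappa>0$, it is automatically real-analytic. Its tangent space projects nondegenerately onto the $(x_1,x_2)$-plane (one checks this from the eigenvectors for $4-2n$ and $3-2n$), so $W^\txtu(p_\kappa)$ is an analytic graph $F=\kappa(1+v_\kappa(x_1,x_2))$. The flow restricted to $(x_1,x_2)$ is the decoupled linear system $\dot x_1=(4-2n)x_1$, $\dot x_2=(3-2n)x_2$, so the expansion in $(x_1,x_2)$ is immediate---there is no nonlinear normal-form step and hence no resonance obstruction to analyticity. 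Your anticipated ``delicate arithmetic step'' checking that no logarithms appear is simply absent in this case; the resonance analysis in the paper belongs to the nonzero-contact-angle problem, where the unstable manifold is three-dimensional and the reduced flow is genuinely nonlinear.

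\textbf{The $\kappa$-dependence.} The $\kappa^{-n}$ in the arguments of $v$ comes from the scaling invariance $(x_1,x_2,F,F',F'')\mapsto(\kappa^n x_1,\kappa^n x_2,\kappa F,\kappa F',\kappa F'')$ of the five-dimensional system, which maps $W^\txtu(p_1)$ to $W^\txtu(p_\kappa)$ and makes $v$ itself $\kappa$-independent.
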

For $n \in \left(1,3/2\right)$ this result was suggested by Giacomelli, Otto, and the second of the authors of this paper in~\cite{ggo.2013} and proven by the second of the authors of this paper in an appendix of his PhD thesis. The methods used here simplify this reasoning through the study of invariant manifolds of a suitable dynamical system as detailed below. We further mention that for $n = 1$ the ODE \eqref{ode_third2} is the linear one $\frac{\d^3 H}{\d x^3} = -1 + x$ for $x \in (0,1)$ and the solution of problem~\eqref{problem_x} can be explicitly computed to be $ H = \frac{1}{24} x^2 (2-x)^2$ for $x \in [0,1]$, the \emph{Smyth-Hill profile}~\cite{sh.1988}. For $n \in \left(0,3/2\right) \setminus \{1\}$, we may insert the expansion \eqref{power_0n32} into \eqref{ode_third2} and obtain
\begin{align*}
\partial_{x_1} v(0,0) &= - \left((5-2n) (4-2n) (3-2n)\right)^{-1}, \\
\partial_{x_2} v(0,0) &= \left((6-2n) (5-2n) (4-2n)\right)^{-1},
\end{align*}
proving the non-trivial dependence on both $x_1$ and $x_2$.

\medskip

For $\theta > 0$, necessarily $H = \theta x (1+o(1))$ as $x \searrow 0$ due to the boundary condition \eqref{bcx1}. For the correction we are able to prove:
\begin{theorem}\label{th:0n3_ne2}
For given $n \in \left(0,3\right) \setminus \{3-\frac1m:m\in\N\}$ and $\theta > 0$ there exists a unique solution $H \in C^1([0,1]) \cap C^3((0,1))$ of problem~\eqref{problem_x} with the asymptotic
\begin{equation}\label{power_0n3}
H = \theta x \left(1 + v\left(x,x^\beta\right)\right) \quad \mbox{for } \, 0 \le x \ll 1,
\end{equation}
where $\beta = 3 - n$ and $v = v(x_1,x_2)$ is an analytic function in a neighborhood of $(x_1,x_2) = (0,0)$, $v(0,0)=0$ but $v\not\equiv 0$  with an in general non-trivial dependence on both $x_1$ and $x_2$.
\end{theorem}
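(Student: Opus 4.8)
The plan is to translate the singular boundary-value problem \eqref{problem_x} into an autonomous dynamical system, read off the behaviour $H\sim\theta x$ as a hyperbolic equilibrium, and obtain the expansion \eqref{power_0n3} from a real-analytic invariant manifold through that equilibrium. Concretely, I would introduce the logarithmic variable $\tau=\log x$ (so that $x\searrow 0$ corresponds to $\tau\to-\infty$) and the scale-invariant unknowns $v_1=xH'/H$, $v_2=x^2H''/H$, $q=x^3H^{-n}$. A direct computation using \eqref{ode_third2} (which gives $H'''=(x-1)H^{1-n}$) shows that $(x,v_1,v_2,q)$ satisfies the polynomial system
\[
\dot x=x,\qquad \dot v_1=v_1+v_2-v_1^2,\qquad \dot v_2=2v_2-v_1v_2+(x-1)q,\qquad \dot q=(3-nv_1)q,
\]
where $\dot{}=\mathrm{d}/\mathrm{d}\tau$, and a solution of \eqref{problem_x} with $H=\theta x(1+o(1))$ as $x\searrow 0$ corresponds to an orbit converging to the equilibrium $P=(0,1,0,0)$ as $\tau\to-\infty$ (one checks $v_1\to1$, $v_2\to0$, $q\sim\theta^{-n}x^{3-n}\to0$).

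The linearization at $P$ is block-triangular with spectrum $\{-1,1,1,3-n\}$; since $n\in(0,3)$ forces $3-n>0$, $P$ is hyperbolic, with a one-dimensional stable manifold and a three-dimensional unstable manifold $W^{\mathrm{u}}(P)$ (in the $\tau\to+\infty$ sense). The eigenvalue $1$ is diagonalizable of multiplicity two precisely because $n\neq2$, the two eigendirections being "$x$ itself" and an internal direction in $(v_1,v_2)$. The orbit we want lies on $W^{\mathrm{u}}(P)$.

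Because the vector field is real-analytic, $W^{\mathrm{u}}(P)$ is a real-analytic three-manifold, and its eigenvalues $\{1,1,3-n\}$ are all positive, hence lie in a Poincaré domain. The arithmetic heart of the proof is that the non-resonance condition $\lambda\neq\sum_i m_i\lambda_i$ for $\lambda,\lambda_i\in\{1,1,3-n\}$, $m_i\in\N_0$, $\sum_i m_i\ge2$, holds if and only if $m(3-n)\neq1$ for all $m\in\N$, i.e. $n\notin\{3-\tfrac1m:m\in\N\}$ — with the single further exceptional value $n=1$, where the resonance $3-n=1+1$ occurs but \eqref{ode_third2} is the linear equation $H'''=-1+x$ with the explicit solution $H=\theta x+(\tfrac16-\tfrac\theta2)x^2-\tfrac{x^3}{6}+\tfrac{x^4}{24}$, already of the asserted form. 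For the remaining $n$, Poincaré's analytic linearization theorem applies to the flow on $W^{\mathrm{u}}(P)$; choosing analytic linearizing coordinates $(\chi_1,\chi_2,\chi_3)$ with $\chi_1=x|_{W^{\mathrm{u}}}$ (admissible since $x$ restricted to $W^{\mathrm{u}}$ is itself an eigenvalue-$1$ coordinate), the orbit becomes $(\chi_1,\chi_2,\chi_3)=(x,\,a_2x,\,a_3x^{3-n})$, so $v_1=\Phi(x,a_2x,a_3x^{3-n})$ with $\Phi$ analytic at the origin and $\Phi(0)=1$. Integrating $\tfrac{\mathrm{d}}{\mathrm{d}\tau}\log(H/x)=v_1-1$ from $\tau=-\infty$ is legitimate term by term because every exponent $j+k(3-n)$ with $(j,k)\in\N_0^2\setminus\{(0,0)\}$ is strictly positive (as $3-n>0$), and the resulting series converges in the original polydisc since these exponents are bounded below by $\min(1,3-n)$; exponentiating yields \eqref{power_0n3} with $\beta=3-n$ and $v$ analytic, $v(0,0)=0$. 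Substituting \eqref{power_0n3} back into \eqref{ode_third2} and matching the lowest powers gives $\partial_{x_2}v(0,0)=-\bigl[\theta^n(4-n)(3-n)(2-n)\bigr]^{-1}\neq0$, so the $x_2$-dependence is genuine, while the coefficient of $x_1$ is a free parameter of the local germ (it is fixed only by the far boundary), nonzero for generic $\theta$.

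For existence and uniqueness of the global profile $H\in C^1([0,1])\cap C^3((0,1))$, one uses that the orbits on $W^{\mathrm{u}}(P)$ with $\lim_{x\searrow0}H/x=\theta$ form a one-parameter family (parametrized by the free $x_1$-coefficient above), continues each forward to $x=1$, and imposes the remaining boundary condition \eqref{bcx2} together with positivity $H>0$ on $(0,1)$: the sign $H'''=(x-1)H^{1-n}<0$ on $(0,1)$ makes $H'$ concave with $H'(0)=\theta>0$, so $H'(1)=0$ forces $H'\ge\theta(1-x)>0$ and hence $H>0$ on $(0,1)$ automatically, and it renders the shooting map monotone, so precisely one orbit qualifies; the details are the boundary-manifold shooting of Section~\ref{sec:global} (for $\theta=0$ the analogous facts are in \cite{bpw.1992}). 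The main obstacle is the step on $W^{\mathrm{u}}(P)$: establishing that the non-resonance condition for analytic linearization is exactly $n\notin\{3-1/m\}$, handling the double eigenvalue $1$ (harmless but requiring $n\neq2$) and the marginal case $n=1$, and — crucially — choosing the linearizing coordinate compatibly with $x$ so that the reconstructed remainder is a genuine convergent power series in $x$ and $x^{3-n}$ carrying no logarithmic terms; the appearance of precisely such logarithms when $m(3-n)=1$ is what forces the excluded set and is treated separately in Theorem~\ref{th:n2}. A secondary difficulty is making the shooting rigorous, in particular excluding finite-$\tau$ blow-up and loss of positivity.
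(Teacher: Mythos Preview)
Your approach is correct and genuinely different from the paper's. The paper factors $H=xF$ and works with the five-dimensional system in $(x_1,x_2,F,F',F'')$, where $x_1=x$, $x_2=x^{3-n}$; the equilibrium $p_\theta=(0,0,\theta,0,0)$ is \emph{non-hyperbolic} with a one-dimensional center manifold consisting of the line of equilibria $\{(0,0,\theta,0,0):\theta>0\}$. The paper then writes the three-dimensional unstable manifold as a graph $F'=\tilde g_1(x_1,x_2,F)$, obtains a scalar non-autonomous ODE for $u=F-\theta$, unfolds it in $(x_1,x_2)$, and solves the resulting linear-plus-nonlinear problem by an explicit contraction-mapping argument (Proposition~\ref{prop:nonres}) rather than by an abstract linearization theorem. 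Your ratio coordinates $(v_1,v_2,q)$ quotient out precisely that line of equilibria, so your four-dimensional equilibrium $P$ is hyperbolic and you can invoke Poincar\'e's theorem on the three-dimensional $W^{\mathrm u}(P)$; the eigenvalue triple $\{1,1,3-n\}$ is the same in both pictures, and both routes identify the critical set $\{3-1/m\}$ from the same arithmetic. What your approach buys is conceptual economy (no center manifold, no hand-built fixed-point operator); what it costs is the extraneous exceptional value $n=1$, where the resonance $2=1+1$ blocks Poincar\'e linearization and you must patch with the explicit polynomial solution, whereas the paper's tailored operator $\mathcal T$ in \eqref{def_t} has no vanishing denominator at $n=1$ and treats it uniformly. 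Your step ``take $\chi_1=x$'' is legitimate but deserves one more line: since $\dot x=x$ holds exactly on $W^{\mathrm u}$, in any linearizing chart $x$ is an analytic eigenfunction of eigenvalue $1$, and non-resonance forces it to be a \emph{linear} combination of the two eigenvalue-$1$ coordinates, so a basis change suffices. Finally, your global sketch is essentially right for $n>1$ (concavity of $H'$ does give a monotone shooting map via a comparison of two solutions), but for $n\in(0,1)$ the sign of $H^{1-n}$ flips and monotonicity is not immediate; the paper handles uniqueness there by a separate rescaling argument (\S\ref{ssec:global_unique}), and existence by explicit under/overshoot estimates (Lemma~\ref{lem:poslittle}), which you would still need.
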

Again, for $n = 1$ we can calculate the explicit solution
\begin{equation}\label{n1_explicit}
H = \frac{1}{24} x(x-2)(x(x-2)-12 \theta ) \quad \mbox{for} \quad x \in [0,1].
\end{equation}
For general  $n \in \left(0,3\right) \setminus \{3-\frac1m:m\in\N\}$, we may use the expansion \eqref{power_0n3} in \eqref{ode_third2} and obtain explicitly
\begin{align*}
\partial_{x_1} v(0,0) &= b,\\
\partial_{x_2} v(0,0) &= \left((4-n) (3-n) (2-n)\right)^{-1} \theta^{-n},\\
\partial_{x_1} \partial_{x_2} v(0,0) &= \left(1 + (1-n) b\right) \left((5-n) (4-n) (3-n)\right)^{-1} \theta^{-n}
\end{align*}
with $b \in \R$, demonstrating that the expansion in both $x_1$ and $x_2$ is non-trivial.

\medskip

The case $n = 3 - \frac 1m$, where $m \in \N$, turns out to be particular as here we can show:
\begin{theorem}\label{th:n2}
For $n = 3 - \frac 1m$, where $m \in \N$, and $\theta > 0$ there exists a unique solution $H \in C^1([0,1]) \cap C^3((0,1))$ of problem~\eqref{problem_x} with the asymptotic
\begin{equation}\label{power_n2}
H = \theta x \left(1 + v\left(x\log x,x^\beta\right)\right) 
\quad \mbox{for } \, 0 \le x \ll 1,
\end{equation}
where $\beta = 3 - n = \frac 1 m$ and $v = v(x_1,x_2)$ is an analytic function in a neighborhood of $(x_1,x_2) = (0,0)$, $v(0,0)=0$ but $v\not\equiv 0$ with an in general non-trivial dependence on both $x_1$ and $x_2$.
\end{theorem}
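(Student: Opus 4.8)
The plan is to defer existence and uniqueness to \S\ref{sec:global} — the shooting argument in the boundary-manifold formulation, together with the direct uniqueness argument, is insensitive to whether $n=3-\tfrac1m$ — and to concentrate on the local expansion \eqref{power_n2}, following the dynamical-systems scheme of \S\ref{sec:local} and of \cite{ggo.2013}. Since \eqref{bcx1} forces $H=\theta x(1+o(1))$ as $x\searrow0$, I would first pass to the logarithmic time $t:=-\log x\to+\infty$ and the rescaled unknowns $a:=H/(\theta x)$, $b:=\theta^{-1}\frac{\d H}{\d x}$, $c:=\theta^{-1}x\frac{\d^2H}{\d x^2}$, together with the auxiliary variable $q:=x^{\beta}=x^{1/m}$ (so that $x=q^{m}$) introduced precisely to make the system autonomous. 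Writing $\dot{}=\frac{\d}{\d t}$, equation \eqref{ode_third2} becomes the autonomous system $\dot a=a-b$, $\dot b=-c$, $\dot c=-c+\theta^{-n}a^{1-n}q(1-q^{m})$, $\dot q=-\tfrac1m q$, a real-analytic vector field near the state $P_\star:=(1,1,0,0)$, which lies on a line of equilibria $\{(a,a,0,0)\}$; the boundary condition \eqref{bcx1} singles out $P_\star$, and the contact-line solution is the trajectory that converges to $P_\star$ as $t\to+\infty$ inside $\{q>0\}$.

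Next I would linearise at $P_\star$ and read off the spectrum $\{1,\,0,\,-1,\,-\tfrac1m\}$. The eigenvalue $1$ is irrelevant for $t\to+\infty$; the eigenvalue $0$ is tangent to the line of equilibria and is not excited once $\theta$ is prescribed; and the slow stable mode (eigenvalue $-\tfrac1m$) \emph{is} excited, since the leading correction to $H=\theta x$ is of order $x^{4-n}=x^{1+1/m}$ (times $\log x$ when $m=1$), hence not analytic in $x$. So the trajectory lies on the two-dimensional, real-analytic stable manifold $W^{\mathrm s}(P_\star)$, tangent at $P_\star$ to the eigenspaces for $-1$ and $-\tfrac1m$; in analytic coordinates $(\xi,\eta)$ adapted to these eigendirections the flow on $W^{\mathrm s}(P_\star)$ is $\dot\xi=-\tfrac1m\xi+\cdots$, $\dot\eta=-\eta+\cdots$ with $\xi\sim x^{\beta}$ and $\eta\sim x$, and the decisive fact is the resonance $-1=m\cdot(-\tfrac1m)$ between the two eigenvalues.

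Because $\{-1,-\tfrac1m\}$ lies in the Poincar\'e domain, the analytic Poincar\'e--Dulac theorem provides an analytic change of variables taking the flow on $W^{\mathrm s}(P_\star)$ to its polynomial normal form; for $m\ge2$ the only resonant monomial is $\xi^{m}$ in the $\eta$-equation, so the normal form is exactly $\dot\xi=-\tfrac1m\xi$, $\dot\eta=-\eta+c\,\xi^{m}$ for some $c\in\R$ (and for $m=1$, i.e.\ $n=2$, the two eigenvalues coincide, the relevant block has a Jordan cell, and the same secular term arises). Integrating, $\xi(t)=\xi_0e^{-t/m}$ and $\eta(t)=\eta_0e^{-t}+c\,\xi_0^m\,t\,e^{-t}$; translating back through $e^{-t}=x$, $e^{-t/m}=x^{\beta}$ and $t e^{-t}=-x\log x$ gives $\xi=\xi_0 x^{\beta}$, $\eta=\eta_0 x-c\,\xi_0^m\,x\log x$. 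As $a=H/(\theta x)$ is an analytic function of $(\xi,\eta)$ on $W^{\mathrm s}(P_\star)$ and $x=(x^{\beta})^{m}$ is itself analytic in $x^{\beta}$, this yields $H=\theta x\bigl(1+v(x\log x,\,x^{\beta})\bigr)$ with $v$ analytic near $(0,0)$ and $v(0,0)=0$, i.e.\ \eqref{power_n2} with $\beta=\tfrac1m$. That $v\not\equiv0$ is immediate ($H\equiv\theta x$ would violate \eqref{ode_third2}); the non-trivial dependence on $x_2=x^{\beta}$ follows from $\xi_0\neq0$, and the non-trivial dependence on $x_1=x\log x$ reduces to showing $c\neq0$.

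The hard part is precisely $c\neq0$, i.e.\ that the logarithm is really present. I would compute $c$ by propagating the $\xi^{m}$-monomial through the reduction onto $W^{\mathrm s}(P_\star)$ and the finitely many normalising changes of variables. On $W^{\mathrm s}(P_\star)$ both $q$ and $a-1$ are $O(\xi)$, so at the relevant order the only source of a $\xi^{m}$-monomial in the $c$-equation is the term $\binom{-(n-1)}{m-1}(a-1)^{m-1}q$ coming from the Taylor expansion of $a^{1-n}$ about $a=1$; with $n-1=2-\tfrac1m$ this binomial coefficient is the product of the $m-1$ strictly negative, nonvanishing factors $\tfrac1m-2-j$, $j=0,\dots,m-2$, hence nonzero. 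To exclude a conspiratorial cancellation against the lower-order normalising transformations I would argue by continuity in $n$: for $n$ slightly off $3-\tfrac1m$ the system is non-resonant and the normalising transformation removes this monomial only at the price of the small denominator $m(3-n)-1$, whose vanishing at $n=3-\tfrac1m$ is exactly what promotes the (nonzero) numerator to the resonant coefficient $c$. Either way $c\neq0$, which completes the proof of \eqref{power_n2}.
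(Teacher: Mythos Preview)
Your argument is correct and follows the same broad dynamical-systems philosophy as the paper, but the technical execution is genuinely different. The paper works in the five-dimensional system~\eqref{dyn_0n3} with coordinates $(x_1,x_2,F,F',F'')$, reduces to the three-dimensional analytic unstable manifold (Lemma~\ref{lem:umfldequat}), and then---explicitly declining to rely on Poincar\'e--Dulac because ``the Poincar\'e--Dulac Theorem does not provide an analytic conjugacy in general''---constructs the solution by a bespoke fixed-point argument in the variables $(y_1,y_2,y_3)=(y^m\log y,\,y,\,b y^m)$ (Proposition~\ref{prop:res}). You instead drop the redundant coordinate $x_1=x_2^m$ from the outset (legitimate precisely because $n=3-\tfrac1m$), obtain a four-dimensional system whose two-dimensional stable manifold carries eigenvalues $\{-1,-\tfrac1m\}$, observe that these lie in the Poincar\'e domain so that the \emph{analytic} Poincar\'e--Dulac theorem does apply, and read off the normal form $\dot\xi=-\tfrac1m\xi$, $\dot\eta=-\eta+c\,\xi^m$ directly. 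Your route is cleaner here: it avoids the spurious (R2)-resonances of Lemma~\ref{lem:reson} (which come from the doubled eigenvalue $1$ in the paper's three-dimensional reduction) and replaces the ad-hoc operator $\TT$ of~\eqref{def_tb} by a single appeal to classical normal-form theory. The paper's route buys self-containedness and, more importantly, the explicit unfolding in the shooting parameter $b$ (via $y_3$) needed for Corollary~\ref{cor:exp_hb} and hence for the overshoot/undershoot estimates in Lemma~\ref{lem:poslittle}(b); if you defer existence to \S\ref{sec:global} as written, you should note that those estimates still require control of $H,H',H''$ in terms of your free parameter $\eta_0$, which your normal form does provide but which you have not made explicit.

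Two small points. First, your justification that the slow mode is excited (``since the leading correction is of order $x^{1+1/m}$, hence not analytic'') is circular; the clean argument is that on $W^{\mathrm s}(P_\star)$ the $q$-coordinate is to leading order proportional to $\xi$ (the $-1$ eigenvector has vanishing $q$-component), so $q=x^{1/m}>0$ forces $\xi_0\neq0$. Second, your argument for $c\neq0$ is heuristic; the small-denominator continuity sketch is suggestive but not a proof. Note, however, that the theorem only asserts ``in general non-trivial dependence'', and the paper itself establishes this only by the explicit computation for $n=5/2$ displayed after Theorem~\ref{th:n2}---so a single such computation suffices here too.
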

Inserting the expansion \eqref{power_n2} into \eqref{ode_third2} for $n = 5/2$, we obtain the explicit expansion
\[
H = \theta x \left(1 + \frac{8}{3} \theta^{- \frac 5 2} x^{\frac 1 2} - 2 \theta^{-5} x \log x + \cO(x)\right) \quad \mbox{as} \quad x \searrow 0,
\]
that is, logarithmic terms do appear in general.

\subsection{Discussion}
The rest of the paper will focus on the proofs of Theorems~\ref{th:0n32_0}, \ref{th:0n3_ne2}, and \ref{th:n2}. The key ingredient is the reduction of the dynamics close to the contact line (that is, $x = 0$) to the study of invariant manifolds of suitable dynamical systems and the investigation of the eigenvalues of a linearization. This determines the exponents in the power series~\eqref{power_0n32} and~\eqref{power_0n3}. For example, for $n = 2$ and $\theta > 0$ two eigenvalues of the linearization coalesce leading to a nontrivial Jordan block. This effect leads to the logarithm in~\eqref{power_n2} at the first resonance. The existence and uniqueness part in Theorems~\ref{th:0n3_ne2} and~\ref{th:n2} is then obtained by means of a shooting argument, matching the symmetry condition \eqref{bcx2} at $x = 1$.

\medskip

We note that the expansion~\eqref{power_0n3} has been found by Kn\"upfer in~\cite{k.2015,k.err.2016} for $n \in (0,14/5) \setminus \left\{2,\frac 5 2, \frac 8 3, \frac{11}{4}\right\}$ and the expansion~\eqref{power_n2} for $n = 2$ by the same author in~\cite{k.2011} for general solutions close to a stationary profile with nonzero static contact angle, relying on a considerably greater technical effort than in this note. However, Kn\"upfer's result \cite{k.2015,k.err.2016} does not capture possible resonances at $n = 5/2$, $n = 8/3$, and $n = 11/4$. It should also be pointed out that the resonances at certain values of $n$ appear as points of special technical difficulty in~\cite{k.2015}, namely as a validity boundary ($n=14/5$ cf.~\cite[Thm.~4.1]{k.2015}) and in the construction of certain weighted Sobolev spaces ($n=5/2$ cf.~\cite[eq.~(45)]{k.2015} and $n=11/4$ cf.~\cite[eq.~(47)]{k.2015}), as well as other constraints stated in \cite{k.err.2016}.

\medskip

Our motivation for partially re-visiting these results lies primarily in the simplicity of our arguments and in a thorough understanding of the sequence of resonances on approaching the no-slip case $n = 3$, stated in Theorem~\ref{th:n2}. It appears that local analysis via dynamical systems methods shows more clearly where certain expansion powers and coefficients arise, why logarithmic terms must appear, and why global existence and uniqueness follow from certain nonlinear sign/monotonicity properties of the equation.

\section{Local analysis at the contact line}\label{sec:local}
\subsection{Zero contact angles}\label{ssec:cw}

Here we consider the zero-contact angle case with $n \in (0,3/2)$. The goal is to prove the local statements near the contact line stated in Theorem~\ref{th:0n32_0}. Due to the result of~\cite{bpw.1992} ({cf.}~\eqref{bpw_0n32}), we may set
\begin{equation}\label{trafo_0n32}
H =: x^2 F \quad \mbox{where } \, 0 \le x \ll 1.
\end{equation}
Using~\eqref{trafo_0n32}, equation~\eqref{ode_third} transforms into
\begin{equation}\label{eq_0n32}
F^{n-1} x^{2 n - 3} q\left(x \frac{\d}{\d x}\right) F = -1 + x 
\quad \mbox{for } \, 0 < x \ll 1,
\end{equation}
where
\begin{equation}\label{def_q}
q\left(x \frac{\d}{\d x}\right) = x \frac{\d^3}{\d x^3} x^2 = \left(x \frac{\d}{\d x}\right)^3 + 3 \left(x \frac{\d}{\d x}\right)^2 + 2 x \frac{\d}{\d x} = x \frac{\d}{\d x} \left(x \frac{\d}{\d x}+1\right) \left(x \frac{\d}{\d x}+2\right)
\end{equation}
is a polynomial operator of order 3 in $x \frac{\d}{\d x}$. It is also convenient to introduce $x_1 := x^{3 - 2 n}$, $x_2 := x^{4-2n}$, and to pass to (natural) $\log$-coordinates
\begin{equation}\label{log_coord}
s := \log x,
\end{equation}
as this results in $\frac{\d}{\d s} = x \frac{\d}{\d x}$, whence~\eqref{eq_0n32} turns into
\begin{equation}\label{eq_0n32_s}
q\left(\frac{\d}{\d s}\right) F = \frac{- \txte^{(3-2n)s} + \txte^{(4-2n)s}}{F^{n-1}} \quad \mbox{for } \, - \infty < s \ll -1.
\end{equation}
Using the notation
\begin{equation}\label{not_0n32}
x_1 := \txte^{(4-2n) s}, \quad x_2 := \txte^{(3-2n) s}, \quad F^\prime := \frac{\d F}{\d s}, \quad \mbox{and } \, F^{\prime\prime} := \frac{\d^2 F}{\d s^2},
\end{equation}
we can turn \eqref{eq_0n32_s} into an autonomous five-dimensional dynamical system
\begin{equation}\label{dyn_0n32}
\frac{\d}{\d s} \begin{pmatrix} x_1 \\ x_2 \\ F \\ F^\prime \\ F^{\prime\prime} \end{pmatrix} = \mathcal F\left(x_1,x_2,F,F^\prime,F^{\prime\prime}\right) := \begin{pmatrix} (4-2n) x_1 \\ (3-2n) x_2 \\ F^\prime \\ F^{\prime\prime} \\ \frac{x_1 - x_2}{F^{n-1}} - 3 F^{\prime\prime} - 2 F^\prime \end{pmatrix} \quad \mbox{for } \, - \infty < s \ll 1.
\end{equation}
The next step is to describe the leading asymptotics as $s \to - \infty$ as this corresponds to understanding the boundary value problem near the contact line for $0<x\ll1$.
\begin{lemma}\label{lem:conv_0n32}
Consider the ODE~\eqref{dyn_0n32}. Then trajectories, which satisfy the thin-film equation~\eqref{problem_x} after transformation, must satisfy
\begin{equation}\label{as_f_summary}
\left(x_1, x_2, F, F^\prime, F^{\prime\prime}\right) \to p_\kappa := 
\left(0,0,\kappa,0,0\right) \quad \mbox{as } \, s \to - \infty
\end{equation}
for some constant $\kappa>0$ given in~\eqref{bpw_0n32}.
\end{lemma}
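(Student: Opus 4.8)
The plan is to extract the asymptotic behavior as $s \to -\infty$ directly from the structure of the dynamical system \eqref{dyn_0n32}, starting from the known leading-order asymptotic \eqref{bpw_0n32}, which in the variable $F = x^{-2}H$ reads $F \to \kappa$ as $x \searrow 0$, i.e.\ as $s \to -\infty$. First I would record that, since $3 - 2n > 0$ and $4 - 2n > 0$ on the range $n \in (0,3/2)$, the two linear equations $x_1' = (4-2n)x_1$ and $x_2' = (3-2n)x_2$ force $x_1 = \mathrm{e}^{(4-2n)s} \to 0$ and $x_2 = \mathrm{e}^{(3-2n)s} \to 0$ as $s \to -\infty$; these two components are explicit and require no argument beyond the sign of the exponents. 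So the only real content is showing $F \to \kappa$, $F' \to 0$, and $F'' \to 0$.

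The key step is to control $F'$ and $F''$. From \eqref{bpw_0n32} we already have $F \to \kappa > 0$, so $F$ is bounded and bounded away from $0$ for $s \ll -1$, hence $F^{n-1}$ is bounded and bounded below; combined with $x_1 - x_2 \to 0$, the right-hand side of the $F''$-equation in \eqref{dyn_0n32} shows that $F'' + 3F'' + 2F' \to 0$ is not quite immediate — rather, I would read \eqref{eq_0n32_s} as the linear constant-coefficient ODE $q(\mathrm{d}/\mathrm{d}s)F = g(s)$ with forcing $g(s) := (x_1 - x_2)/F^{n-1} \to 0$ exponentially. Since $q(\zeta) = \zeta(\zeta+1)(\zeta+2)$ has roots $0, -1, -2$, the homogeneous solutions are $1, \mathrm{e}^{-s}, \mathrm{e}^{-2s}$, which blow up as $s \to -\infty$. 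The trajectories coming from a genuine thin-film solution are precisely those bounded as $s \to -\infty$ (we know $F \to \kappa$ is finite), so the $\mathrm{e}^{-s}$ and $\mathrm{e}^{-2s}$ modes must be absent; I would make this rigorous by a variation-of-parameters / Duhamel representation of $F$ backward in $s$, writing $F(s) = \kappa + \int_{-\infty}^{s} K(s-\sigma) g(\sigma)\,\mathrm{d}\sigma$ with an explicit kernel $K$ built from $1, \mathrm{e}^{-s}, \mathrm{e}^{-2s}$, using the boundedness of $F$ to kill the growing homogeneous pieces and to fix the constant as the limit $\kappa$. Differentiating this representation once and twice and using the exponential decay of $g$ then gives $F' \to 0$ and $F'' \to 0$, completing the convergence $(x_1,x_2,F,F',F'') \to (0,0,\kappa,0,0) = p_\kappa$.

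The main obstacle I anticipate is justifying that no exponentially growing mode is present, i.e.\ turning the heuristic ``bounded trajectories have no $\mathrm{e}^{-s}, \mathrm{e}^{-2s}$ component'' into a clean argument: one has to be careful that $g$ itself depends on $F$ through $F^{n-1}$, so the Duhamel identity is really a fixed-point / bootstrap statement rather than a closed formula, and one must check that the a priori bound $F \to \kappa$ from \eqref{bpw_0n32} is strong enough to start the bootstrap (it is, since $\kappa > 0$ makes $F^{1-n}$ smooth and bounded near the limit). A secondary point to handle with care is that \eqref{bpw_0n32} is stated only as $H = \kappa x^2(1+o(1))$, i.e.\ it gives $F \to \kappa$ but not yet any rate; the Duhamel representation upgrades this to $F - \kappa = O(\mathrm{e}^{(3-2n)s})$ a posteriori, which is exactly what later steps (the center-manifold reduction near $p_\kappa$) will need, so it is worth phrasing the lemma's proof so that this quantitative decay falls out for free.
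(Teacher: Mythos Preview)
Your argument is correct and takes a genuinely different route from the paper's proof. The paper does not stay in the $s$-variable: instead it follows \cite{bpw.1992} and introduces $\psi := (\d H/\d x)^2$ as a function of $H$, transforming \eqref{ode_third} into a second-order equation for $\psi$, then invokes Taliaferro's asymptotic theorem \cite{t.1979} to obtain $\psi = 4\kappa H(1+o(1))$, upgrades this by one integration to get the asymptotics of $\d\psi/\d H$ and $\d^2\psi/\d H^2$, and finally translates back to $H, \d H/\d x, \d^2 H/\d x^2$ and hence to $F', F''$. Your approach is more self-contained: you treat $q(\d/\d s)F = g(s)$ as a linear constant-coefficient problem with forcing $g = (x_1-x_2)F^{1-n}$, observe that $F\to\kappa>0$ already makes $g = \cO(\txte^{(3-2n)s})$ without any bootstrap (your worry here is slightly overstated --- no fixed-point iteration is needed, only the pointwise bound on $F^{1-n}$), and then use boundedness of $F$ to exclude the homogeneous modes $\txte^{-s}, \txte^{-2s}$. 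What your approach buys is independence from the external reference and, as you note, the quantitative rate $F-\kappa = \cO(\txte^{(3-2n)s})$ for free; what the paper's approach buys is a direct link to the machinery already set up in \cite{bpw.1992}, so that the lemma becomes essentially a citation plus a coordinate change.
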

\begin{proof}
As $s \to - \infty$ necessarily $F \to \kappa$ with $\kappa > 0$ by using~\eqref{bpw_0n32} and the definition~\eqref{trafo_0n32}. By~\eqref{not_0n32} it follows that $x_1 \to 0$ as well as $x_2 \to 0$. We claim that $F^\prime \to 0$ and $F^{\prime\prime} \to 0$ as well. Therefore, we repeat the arguments in~\cite[\S 5]{bpw.1992} and consider $\psi := \left(\frac{\d H}{\d x}\right)^2$ as a function of $H$ (invertibility is ensured for $0 \le x \ll 1$, respectively $0 \le H \ll 1$, by~\eqref{bpw_0n32}). Thus~\eqref{ode_third} transforms into
\begin{equation}\label{eq_psi}
\frac{\d^2 \psi}{\d H^2} = 2 \psi^{- \frac 1 2} H^{1-n} \left(-1+x\right) 
\quad \mbox{as } \, H,x \searrow 0.
\end{equation}
Since $x = x(H) \to 0$ as $H \searrow 0$, Taliaferro's result~\cite{t.1979} implies $\psi = 4 \kappa H (1 + o(1))$ as $H \searrow 0$, which, by using equation~\eqref{eq_psi} and integrating once, upgrades to
\begin{equation}\label{as_psi}
\psi = 4 \kappa H (1 + o(1)), \quad \frac{\d \psi}{\d H} = 4 \kappa (1 + o(1)), 
\quad \mbox{and } \, \frac{\d^2 \psi}{\d H^2} = 
\kappa^{-\frac 1 2} H^{\frac 1 2 - n} (1 + o(1)) \quad \mbox{as } \, H \searrow 0.
\end{equation}
The asymptotics~\eqref{as_psi} can be transformed into asymptotics for $H$, which read
\begin{equation}\label{as_H}
H = \kappa x^2 (1 + o(1)), \quad \frac{\d H}{\d x} = 2 \kappa x (1 + o(1)), 
\quad \mbox{and } \, \frac{\d^2 H}{\d x^2} = 2 \kappa(1 + o(1)) \quad 
\mbox{as } \, x \searrow 0.
\end{equation}
Using~\eqref{trafo_0n32} and~\eqref{not_0n32}, it is straight-forward to see that this implies $F^\prime \to 0$ and $F^{\prime\prime} \to 0$ as $s \to - \infty$.
\end{proof}
The point $p_\kappa$ is a steady state of the dynamical system \eqref{dyn_0n32} as clearly $\mathcal F (p_\kappa) = 0$. Since we are interested in the asymptotic behavior and regularity of trajectories of~\eqref{dyn_0n32}, which converge to $p_\kappa$ as $s\ra -\I$, we must check the structure and regularity of invariant unstable and/or center-unstable manifolds; for background on stable and unstable invariant manifold theory for dynamical systems we refer to~\cite{GH,Kelley1} and for center manifolds to~\cite{Carr,Kelley1,Sijbrand}. Furthermore, we remark that we are only going to be interested in certain manifolds locally near a steady state, so formally we should write, e.g., a stable manifold as $W^\txts_{\textnormal{loc}}(\cdot)$. However, we shall omit the subscript in this case for notational simplicity.
\begin{proposition}\label{prop:steady_0n32}
For each fixed $\kappa>0$, the steady state $p_\kappa$ is non-hyperbolic with two-dimensional stable, one-dimensional center, and two-dimensional unstable manifolds. Any trajectory converging to $p_\kappa$ as $s\ra -\I$ lies on the unstable manifold and satisfies
\begin{equation}\label{eq:asymp1}
F = \kappa \left(1+v\left(\kappa^{-n} x_1, \kappa^{-n} x_2\right)\right)
\end{equation}
for some real-analytic and $\kappa$-independent function $v$ and $-\I<s\ll_{\kappa} -1$.
\end{proposition}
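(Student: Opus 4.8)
The plan is to linearise the dynamical system \eqref{dyn_0n32} at $p_\kappa$, read off its spectrum, invoke the standard (analytic) invariant-manifold theorems, and finally exploit a scaling symmetry of \eqref{dyn_0n32} to pin down the $\kappa$-dependence asserted in \eqref{eq:asymp1}.

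First I would compute the Jacobian $D\mathcal F(p_\kappa)$. Since $\kappa>0$, the map $F\mapsto F^{1-n}$ is real-analytic near $F=\kappa$, so $\mathcal F$ is real-analytic in a neighbourhood of $p_\kappa$. The matrix $D\mathcal F(p_\kappa)$ is block lower-triangular: the $x_1$- and $x_2$-equations are linear and decoupled, contributing the eigenvalues $4-2n$ and $3-2n$, while the $(F,F^\prime,F^{\prime\prime})$-block has characteristic polynomial $q(\lambda)=\lambda(\lambda+1)(\lambda+2)$ (cf.~\eqref{def_q}), hence eigenvalues $0,-1,-2$; the zero eigenvalue reflects the line of steady states $\mathcal L:=\{(0,0,\kappa^\prime,0,0):\kappa^\prime>0\}$. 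For $n\in(0,3/2)$ one has $4-2n>1$ and $3-2n>0$, so the spectrum splits as $\{3-2n,4-2n\}\sqcup\{0\}\sqcup\{-1,-2\}$, giving the claimed two-dimensional unstable, one-dimensional centre, and two-dimensional stable subspaces; by the stable/unstable and centre manifold theorems \cite{GH,Kelley1,Carr,Sijbrand} the corresponding local invariant manifolds exist, and since $\mathcal F$ is real-analytic the unstable manifold $W^\txtu(p_\kappa)$ is real-analytic.

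Next I would show that any trajectory $\gamma$ with $\gamma(s)\to p_\kappa$ as $s\to-\infty$ lies on $W^\txtu(p_\kappa)$. Being backward-bounded and staying in a small neighbourhood of $p_\kappa$ for $s$ sufficiently negative (the stable component would grow in backward time), such a $\gamma$ lies on the three-dimensional centre-unstable manifold $W^{\txtc\txtu}(p_\kappa)$. The key point is that the local centre manifold coincides with $\mathcal L$: on the invariant set $\{x_1=x_2=0\}$ system \eqref{dyn_0n32} reduces to $q(\d/\d s)F=0$, whose only solutions bounded as $s\to-\infty$ are the constants. Hence $W^{\txtc\txtu}(p_\kappa)$ carries an invariant strong-unstable fibration over $\mathcal L$ whose fibres are precisely the manifolds $W^\txtu(p_{\kappa^\prime})$, so $\gamma$ lies in one such fibre; as $\gamma(s)\to p_\kappa$, the fibre must be $W^\txtu(p_\kappa)$. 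I expect this step---identifying the centre manifold with $\mathcal L$ and setting up the fibration---to be the main obstacle, although it is made tractable by the centre direction being as degenerate as possible (a line of equilibria).

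Finally I would parametrise and rescale. The two unstable eigenvectors project isomorphically onto the $(x_1,x_2)$-plane exactly because $q(4-2n)$ and $q(3-2n)$ are nonzero for $n\in(0,3/2)$ (all three factors of $q$ are positive there), so near $p_\kappa$ the manifold $W^\txtu(p_\kappa)$ is a graph $(x_1,x_2)\mapsto(F,F^\prime,F^{\prime\prime})$ with $F$ real-analytic and $F(0,0)=\kappa$; along a genuine solution $x_1=x^{4-2n}$ and $x_2=x^{3-2n}$, which gives the form in \eqref{eq:asymp1} with $\beta=3-2n$. To obtain the $\kappa$-independence of the profile, I would use the transformation $T_\mu:(x_1,x_2,F,F^\prime,F^{\prime\prime})\mapsto(\mu^n x_1,\mu^n x_2,\mu F,\mu F^\prime,\mu F^{\prime\prime})$, which a direct check shows maps solutions of \eqref{dyn_0n32} to solutions (without rescaling $s$) and sends $p_\kappa$ to $p_{\mu\kappa}$; hence $T_\mu$ carries $W^\txtu(p_\kappa)$ onto $W^\txtu(p_{\mu\kappa})$, which forces $F(x_1,x_2)=\kappa\,\Phi(\kappa^{-n}x_1,\kappa^{-n}x_2)$ where $\Phi$ is the single analytic function obtained at $\kappa=1$. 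Setting $v:=\Phi-1$ yields \eqref{eq:asymp1} with $v(0,0)=0$ and with $v$ independent of $\kappa$.
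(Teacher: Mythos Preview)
Your proposal is correct and follows essentially the same approach as the paper: linearise, read off the spectrum $\{4-2n,\,3-2n,\,0,\,-1,\,-2\}$, identify the centre manifold with the line of equilibria $\mathcal L$, conclude that a backward-convergent trajectory lies on $W^\txtu(p_\kappa)$, parametrise $W^\txtu(p_\kappa)$ as an analytic graph over $(x_1,x_2)$, and finally use the scaling symmetry $(x_1,x_2,F,F',F'')\mapsto(\mu^n x_1,\mu^n x_2,\mu F,\mu F',\mu F'')$ to factor out $\kappa$. The only cosmetic differences are that the paper computes the unstable eigenvectors explicitly (rather than invoking $q(4-2n),q(3-2n)\neq 0$ to see they project onto the $(x_1,x_2)$-plane) and phrases the foliation argument directly in terms of a tubular neighbourhood of $W^\txtc(p_\kappa)$ rather than via $W^{\txtc\txtu}$ and its strong-unstable fibration.
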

\begin{proof}
The linearization of $\mathcal F$ at the steady state is given by
\[
\txtD \mathcal F(p_\kappa) = \begin{pmatrix} 4-2n & 0 & 0 & 0 & 0 \\ 0 & 3-2n & 0 & 0 & 0 \\ 0 & 0 & 0 & 1 & 0 \\ 0 & 0 & 0 & 0 & 1 \\ \kappa^{1-n} & - \kappa^{1-n} & 0 & -2 & -3 \end{pmatrix}
\]
and the characteristic polynomial can be easily computed to be
\begin{equation}\label{char_0n32}
\zeta\mapsto \left(\zeta - (4-2n)\right) \left(\zeta - (3-2n)\right) q(\zeta),
\end{equation}
where $q(\zeta) = \zeta (\zeta + 1) (\zeta + 2)$ is the same as in~\eqref{def_q}. Since we are dealing with the case $n\in(0,3/2)$, there are two positive eigenvalues, $4-2n$ and $3-2n$, and two negative ones, namely $-1$ and $-2$. However, as the single eigenvalue $0$ appears, we are \emph{not} dealing with a hyperbolic steady state. Nonetheless, note that the eigenvector associated to $0$ is given by $(0,0,1,0,0)^\top$ so that the linear center eigenspace is
\[
E^\txtc(p_\kappa)=\{x_1=0,x_2=0,F^\prime=0,F^{\prime\prime}=0\}.
\]
Furthermore, $E^\txtc(p_\kappa)$ is an invariant subspace for the full nonlinear ODE~\eqref{dyn_0n32} as it consists entirely of equilibrium points. Therefore, for fixed $\kappa>0$, the center manifold $W^\txtc(p_\kappa)$ coincides with $E^\txtc(p_\kappa)$. In fact, $W^\txtc(p_\kappa)$ is obviously analytic as it is just a coordinate axis. We remark that this is a special case as \emph{general} center manifolds are not always analytic~\cite{Sijbrand} but the analyticity is still covered by general theory on certain analytic classes of center manifolds as discussed in~\cite{Aulbach}. Since $W^\txtc(p_\kappa)$ consists entirely of equilibria, any sufficiently small tubular neighborhood $\cT$ of it is foliated by the two-dimensional stable and unstable manifolds $W^\txts(p_\kappa)$ and $W^\txtu(p_\kappa)$. Since $0\leq x\ll1$, we only have to consider a solution $\gamma(s)$ as $s\ra -\infty$ which has to converge to a single equilibrium $p_\kappa$. Due to the local foliation around $W^\txtc(p_\kappa)$, this implies that $\gamma(s) \in W^\txtu(p_\kappa)$ for all $s\in(-\I,-s_0]$ for some sufficiently large $s_0$. Therefore, the regularity and asymptotic expansion of $\gamma$ can be studied considering the regularity and dynamics on $W^\txtu(p_\kappa)$. Since $\kappa>0$, we know that $\cF$ is analytic near $p_\kappa$ and this implies analyticity of $W^\txtu(p_\kappa)$~\cite[p.~330]{CoddingtonLevinson}. Analyticity implies
\[
W^\txtu(p_\kappa)=\{(a,b)\in\R^2\times \R^3:b=g(a)\}\cap \cT
\]
where $a,b$ are suitable coordinates and $g:\R^2\ra \R^3$ is an analytic mapping locally inside $\cT$. Denote the unstable eigenvalues by $\lambda_1=4-2n$ and $\lambda_2=3-2n$. The associated eigenvectors are easily computed as 
\begin{align}
V_1 &= \left(- (n-2) (2 n - 5) (n-3) \kappa ^{n-1},0,\frac{1}{4},\frac{2-n}{2},(n-2)^2\right)^\top\label{eq:evec2}, \\
V_2 &= \left(0,2 (2 n-3) (n-2) (2n-5) \kappa^{n-1},1,3-2n,(2n-3)^2\right)^\top.
\label{eq:evec1}
\end{align}
Since $\left(V_1, V_2\right)$ span a plane non-orthogonal to the $(x_1,x_2)$-plane, the tangent space of $\cF$ in $p_\kappa$ can be parametrized by the coordinates $a = (x_1,x_2)$ with the remaining coordinates $b=(F,F',F'')$. Thus we can write the unstable manifold $W^\txtu(p_\kappa)$ as an analytic graph of $(x_1,x_2)$ in a neighborhood of $p_\kappa$. In particular, the first component of this graph is an analytic function $g_1 = g_1(x_1,x_2) = \kappa\left(1 + v_\kappa(x_1,x_2)\right)$. In order to eliminate the $\kappa$-dependence of $v_\kappa$, we notice that \eqref{dyn_0n32} is invariant with respect to the scaling
\[
\left(x_1, x_2, F, F^\prime, F^{\prime\prime}\right) \mapsto \left(\kappa^n x_1, \kappa^n x_2, \kappa F, \kappa F^\prime, \kappa F^{\prime\prime}\right) \quad \mbox{for any} \quad \kappa > 0,
\]
so~\eqref{eq:asymp1} follows.
\end{proof}
Just transforming back to the coordinate $x$ via~\eqref{not_0n32} yields
\[
F = \kappa(1+v(\kappa^{-n} x_1, \kappa^{-n} x_2))=\kappa(1+v(\kappa^{-n} x^{\beta+1},\kappa^{-n} x^\beta)), 
\]
where $v$ is an analytic function and the local result~\eqref{power_0n32} in Theorem~\ref{th:0n32_0} follows. Furthermore, one may be interested in whether the mapping $v$ is non-trivial. Obviously, $v(0,0)=0$ since the third coordinate of the steady state is $\kappa$. 
\begin{lemma}
\label{lem:ntcw}
The function $v$ is non-trivial. In particular, we have
\begin{equation}
\frac{\partial v}{\partial x_1}(0,0)=\frac{(V_1)_3}{(V_1)_2} \ne 0\quad \text{and} \quad \frac{\partial v}{\partial x_2}(0,0)=\frac{(V_2)_3}{(V_2)_1} \ne 0.
\end{equation}
where $(V_i)_j$ is the $j$-th component of the $i$-th eigenvector from~\eqref{eq:evec1}.
\end{lemma}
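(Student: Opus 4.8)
The plan is to read off the linear part of $v$ at the origin directly from the linearization $\txtD\cF(p_\kappa)$ and the eigenvectors $V_1,V_2$ already produced in Proposition~\ref{prop:steady_0n32}. By that proposition the trajectory in question lies on the real-analytic unstable manifold $W^\txtu(p_\kappa)$, written there as a graph $(F,F^\prime,F^{\prime\prime})=g(x_1,x_2)$ over the $(x_1,x_2)$-plane with $g(0,0)=(\kappa,0,0)$, and $v$ is obtained from the first component $g_1=\kappa\bigl(1+v_\kappa(x_1,x_2)\bigr)$ by the rescaling used at the end of that proof. In particular $v(0,0)=0$, so what remains is to show that neither first-order partial derivative of $v$ at the origin vanishes. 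Since~\eqref{dyn_0n32} is invariant under the stated $\kappa$-scaling and $v$ is $\kappa$-independent, we may normalize $\kappa=1$ throughout, in which case $v=g_1-1$ and the two arguments of $v$ are exactly the dynamical variables $(x_1,x_2)$.

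The key step is that $\txtD g(0,0):\R^2\to\R^3$ is the unique linear map whose graph equals the unstable eigenspace $E^\txtu(p_\kappa)=\spanned\{V_1,V_2\}$, because $W^\txtu(p_\kappa)$ is tangent to $E^\txtu(p_\kappa)$ at $p_\kappa$ and is represented there as a graph over the $(x_1,x_2)$-plane. Here the structure of the eigenvectors~\eqref{eq:evec2}--\eqref{eq:evec1} makes the computation immediate: since $(V_1)_2=0$ and $(V_2)_1=0$, the projections of $V_1$ and $V_2$ onto the $(x_1,x_2)$-plane are $(V_1)_1\,e_1$ and $(V_2)_2\,e_2$, which for $n\in(0,3/2)$ are nonzero (the factors $(2-n)$, $(2n-5)$, $(n-3)$, $(2n-3)$ are all strictly nonzero) and lie along the coordinate axes. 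Hence the element of $E^\txtu(p_\kappa)$ projecting onto $e_1$ is $(V_1)_1^{-1}V_1$ and the one projecting onto $e_2$ is $(V_2)_2^{-1}V_2$, so that
\begin{equation*}
\txtD g(0,0)\,e_1=\frac{1}{(V_1)_1}\bigl((V_1)_3,(V_1)_4,(V_1)_5\bigr),\qquad \txtD g(0,0)\,e_2=\frac{1}{(V_2)_2}\bigl((V_2)_3,(V_2)_4,(V_2)_5\bigr).
\end{equation*}
Reading off the first entries yields $\partial v/\partial x_1(0,0)=(V_1)_3/(V_1)_1$ and $\partial v/\partial x_2(0,0)=(V_2)_3/(V_2)_2$, and by the sign count just made both are nonzero, so $v\not\equiv0$ and depends non-trivially on both arguments.

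As an independent check (and an alternative route not using the tangent-space bookkeeping), one can match lowest-order terms in~\eqref{eq_0n32_s} directly: writing $F=\kappa(1+v)$ and using $q(0)=0$ from~\eqref{def_q} turns the equation on the unstable manifold into $q(L)v=(y_1-y_2)(1+v)^{1-n}$, where $y_1,y_2$ are the (rescaled) arguments of $v$, $L:=(4-2n)y_1\partial_{y_1}+(3-2n)y_2\partial_{y_2}$, and $q$ is the operator polynomial of~\eqref{def_q}; since $q(L)\bigl(y_1^{j}y_2^{k}\bigr)=q\bigl((4-2n)j+(3-2n)k\bigr)y_1^{j}y_2^{k}$, comparing the coefficients of $y_1$ and of $y_2$ gives $\partial v/\partial y_1(0,0)=1/q(4-2n)$ and $\partial v/\partial y_2(0,0)=-1/q(3-2n)$, which one verifies agree with the eigenvector ratios above. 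I do not anticipate a genuine obstacle: the argument is essentially bookkeeping once Proposition~\ref{prop:steady_0n32} is in hand, and the only point needing a little care — the graph representation of $W^\txtu(p_\kappa)$ over the $(x_1,x_2)$-plane together with the fact that $V_1$ and $V_2$ project onto the $x_1$- and $x_2$-axes separately — is exactly what was already established in Proposition~\ref{prop:steady_0n32} and in the explicit formulas~\eqref{eq:evec2}--\eqref{eq:evec1}.
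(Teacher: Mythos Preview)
Your argument is correct and follows essentially the same route as the paper: normalize $\kappa=1$, identify the tangent space of $W^\txtu(p_1)$ with $E^\txtu(p_1)=\spanned\{V_1,V_2\}$, use that $V_1$ and $V_2$ project onto the $x_1$- and $x_2$-axes respectively, and read off the partials of $g_1$. Your added ODE check via $q(L)v=(y_1-y_2)(1+v)^{1-n}$ is a clean independent verification not present in the paper. Note that your formulas $\partial_{x_1}v(0,0)=(V_1)_3/(V_1)_1$ and $\partial_{x_2}v(0,0)=(V_2)_3/(V_2)_2$ are the correct ones and in fact repair a typo in the displayed statement of the lemma, where the denominators $(V_1)_2$ and $(V_2)_1$ both vanish; the paper's own proof, like yours, implicitly produces the corrected indices.
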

\begin{proof}
The unstable eigenspace $E^\txtu(p_{1})$ (we may focus on the case $\kappa = 1$) is spanned by $V_1$, $V_2$ so
\begin{equation}
\label{eq:Euup}
E^\txtu(p_{1})=\{(x_1,x_2,F,F',F'')^\top=c_1 V_1 + c_2 V_2 + (0,0,1,0,0)^\top:c_1,c_2\in\R\}. 
\end{equation}
Using the first two equations giving $E^\txtu(p_{1})$ in~\eqref{eq:Euup}, we may express $c_1,c_2$ in terms of $x_1,x_2$. $E^\txtu(p_{1})$ is the tangent space to $W^\txtu(p_{1})$ so differentiating $F$ with respect to $x_1$ and $x_2$ and evaluating each time at $(x_1,x_2)=(0,0)$ yields the result.
\end{proof}
The last result also shows the difficulty to extend the same method beyond the interval $n\in(0,\frac32)$ as $(V_1)_1$ vanishes for $n=\frac32$. In fact, a dynamical systems approach is in general a very helpful strategy to identify special parameter values. Here these values are those of the exponent $n$.

\medskip

The following corollary turns out to be convenient for the proof of existence in Section~\ref{sec:globalexist}.
\begin{corollary}\label{cor:exp_0n32}
Let $H$ denote the solution to \eqref{ode_third2}\&\eqref{bcx1} with $\theta = 0$. Then
\begin{subequations}\label{hdhd2h_0n32}
\begin{align}
H &= \kappa x^2 \left(1 + \cO\left(\kappa^{-n} x^{3-2n} + \kappa^{-n} x^{4-2n}\right)\right) \quad \mbox{as} \quad x \searrow 0,\label{hdhd2h_0n32_0}\\
\frac{\d H}{\d x} &= 2 \kappa x \left(1 + \cO\left(\kappa^{-n} x^{3-2n} + \kappa^{-n} x^{4-2n}\right)\right) \quad \mbox{as} \quad x \searrow 0,\label{hdhd2h_0n32_1}\\
\frac{\d^2 H}{\d x^2} &= 2 \kappa \left(1 + \cO\left(\kappa^{-n} x^{3-2n} + \kappa^{-n} x^{4-2n}\right)\right) \quad \mbox{as} \quad x \searrow 0, \label{hdhd2h_0n32_2}
\end{align}
\end{subequations}
where $\kappa > 0$.
\end{corollary}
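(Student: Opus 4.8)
The plan is to obtain all three expansions by reading off the analytic graph parametrisation of the unstable manifold $W^\txtu(p_\kappa)$ established in Proposition~\ref{prop:steady_0n32}, after re-expressing the $x$-derivatives of $H$ in terms of the dynamical variables $F$, $F^\prime$, $F^{\prime\prime}$. First I would record the elementary change-of-variables identities coming from $s = \log x$ (so that $x\frac{\d}{\d x} = \frac{\d}{\d s}$) and $H = x^2 F$: one has $x\frac{\d F}{\d x} = F^\prime$ and $x^2 \frac{\d^2 F}{\d x^2} = F^{\prime\prime} - F^\prime$, hence
\[
H = x^2 F, \qquad \frac{\d H}{\d x} = x\left(2F + F^\prime\right), \qquad \frac{\d^2 H}{\d x^2} = 2 F + 3 F^\prime + F^{\prime\prime}.
\]
Thus it is enough to control $F - \kappa$, $F^\prime$, and $F^{\prime\prime}$ along the trajectory.

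By Lemma~\ref{lem:conv_0n32} this trajectory converges to $p_\kappa = (0,0,\kappa,0,0)$ as $s \to -\I$ and, by Proposition~\ref{prop:steady_0n32}, lies on $W^\txtu(p_\kappa)$, which is an analytic graph over the $(x_1,x_2)$-plane. Hence $F = g_1(x_1,x_2)$, $F^\prime = g_2(x_1,x_2)$, $F^{\prime\prime} = g_3(x_1,x_2)$ with each $g_i$ analytic near $(0,0)$ and $(g_1,g_2,g_3)(0,0) = (\kappa,0,0)$. The scaling invariance of~\eqref{dyn_0n32} exploited in the proof of Proposition~\ref{prop:steady_0n32} gives $g_i(x_1,x_2) = \kappa\,\widetilde g_i\left(\kappa^{-n} x_1, \kappa^{-n} x_2\right)$ with $\widetilde g_i$ $\kappa$-independent and analytic, $\widetilde g_1(0,0) = 1$ and $\widetilde g_2(0,0) = \widetilde g_3(0,0) = 0$. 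Since an analytic function vanishing at the origin is $\cO$ of the sum of the moduli of its arguments on a fixed neighbourhood, and $x_1 = x^{4-2n}$, $x_2 = x^{3-2n}$ by~\eqref{not_0n32}, this yields
\[
F = \kappa\left(1 + \cO\left(\kappa^{-n} x^{3-2n} + \kappa^{-n} x^{4-2n}\right)\right), \qquad F^\prime, F^{\prime\prime} = \kappa\,\cO\left(\kappa^{-n} x^{3-2n} + \kappa^{-n} x^{4-2n}\right)
\]
as $x \searrow 0$. Substituting these into the three identities above, and noting $2F + F^\prime = 2\kappa\left(1 + \cO(\cdots)\right)$ and $2F + 3F^\prime + F^{\prime\prime} = 2\kappa\left(1 + \cO(\cdots)\right)$ with the same error term, gives exactly~\eqref{hdhd2h_0n32_0}--\eqref{hdhd2h_0n32_2}.

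I expect no genuine obstacle — this is essentially a bookkeeping consequence of Proposition~\ref{prop:steady_0n32} — but two minor points should be checked explicitly. First, the implied $\cO$-constants must be uniform in $\kappa$; this holds because the functions $\widetilde g_i$ are $\kappa$-independent. Second, the radius of the neighbourhood on which the estimate is valid may shrink as $\kappa$ varies, which is harmless and is the reason the asymptotics are stated only ``as $x\searrow 0$''. Local positivity of $H$ near $x = 0$, which is needed to invoke Lemma~\ref{lem:conv_0n32} and Proposition~\ref{prop:steady_0n32}, is ensured by the leading-order asymptotic~\eqref{bpw_0n32}.
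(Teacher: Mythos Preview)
Your proof is correct and follows essentially the same route as the paper: both exploit the analytic graph representation of $W^\txtu(p_\kappa)$ from Proposition~\ref{prop:steady_0n32} together with the scaling invariance to obtain the $\cO$-bounds. The only cosmetic difference is that the paper keeps just the first component $F = g_1(x_1,x_2)$ and differentiates the formula $H = x^2 F$ in $x$ via the chain rule, whereas you read off $F^\prime$ and $F^{\prime\prime}$ directly from the remaining graph components and combine them through the algebraic identities $\frac{\d H}{\d x} = x(2F+F^\prime)$, $\frac{\d^2 H}{\d x^2} = 2F + 3F^\prime + F^{\prime\prime}$; either way the conclusion is immediate.
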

\begin{proof}
We may use expansion \eqref{eq:asymp1} of Proposition~\ref{prop:steady_0n32} (cf.~\eqref{trafo_0n32}):
\begin{subequations}\label{exp_hdhd2h_kappa}
\begin{equation}\label{exp_h_kappa}
H = \kappa x^2 \left(1 + v\left(\kappa^{-n} x_1, \kappa^{-n} x_2\right)\right) \quad \mbox{with} \quad (x_1,x_2) = \left(x^{3-2n}, x^{4-2n}\right),
\end{equation}
where $\kappa^{-n} (\verti{x_1} + \verti{x_2}) \ll 1$ and $v$ is $\kappa$-independent and analytic in a neighborhood of $(x_1,x_2) = (0,0)$ with $v(0,0) = 0$. Differentiating \eqref{exp_h_kappa} with respect to $x$, we obtain
\begin{align}
\begin{split}
\frac{\d H}{\d x} &= \kappa x \left(2 + \left(x \frac{\d}{\d x} + 2\right) v\left(\kappa^{-n} x_1, \kappa^{-n} x_2\right)\right) \\
&= \kappa x \left(2 + \left(x_1 \partial_{x_1} v + x_2 \partial_{x_2} v + 2 v\right)\left(\kappa^{-n} x_1, \kappa^{-n} x_2\right)\right),
\end{split}\\
\begin{split}
\frac{\d^2 H}{\d x^2} &= \kappa \left(2 + q \left(x \frac{\d}{\d x}\right) v\left(\kappa^{-n} x_1, \kappa^{-n} x_2\right)\right) \\
&= \kappa \left(2 + \left(q\left(x_1 \partial_{x_1} + x_2 \partial_{x_2}\right)v\right)\left(\kappa^{-n} x_1, \kappa^{-n} x_2\right)\right)
\end{split}
\end{align}
\end{subequations}
with $q(\zeta) = (\zeta+1)(\zeta+2)$, $(x_1,x_2) = \left(x^{3-2n}, x^{4-2n}\right)$, and $\kappa^{-n} (\verti{x_1} + \verti{x_2}) \ll 1$. Equations~\eqref{exp_hdhd2h_kappa} and $v(0,0) = 0$ immediately yield \eqref{hdhd2h_0n32}.
\end{proof}

\subsection{Nonzero dynamic contact angles: setup}\label{ssec:pw}
Now we proceed to the case of nonzero contact angles. The first step is again to reformulate the problem as a suitable autonomous dynamical system. The main strategy will be as in Section~\ref{ssec:cw} to try to identify invariant manifolds. However, a changed dynamical system has to be analyzed which leads to considerable differences, so we present all calculations.

\medskip

We start with the ansatz
\begin{equation}\label{trafo_pw_0n3}
H =: x F \quad \mbox{where } \, 0 \le x \ll 1.
\end{equation}
to factorize out the leading-order. Using~\eqref{trafo_pw_0n3}, equation~\eqref{ode_third2} transforms into
\begin{equation}\label{eq_pw_0n3}
x^{n-3} F^{n-1} \tilde{q}\left(x \frac{\d}{\d x}\right) F 
= -1 + x \quad \mbox{for } \, 0 < x \ll 1,
\end{equation}
where
\begin{equation}\label{def_qt}
\tilde{q}\left(x \frac{\d}{\d x}\right) = x^2 \frac{\d^3}{\d x^3} x 
= x \frac{\d}{\d x}\left(x \frac{\d}{\d x}-1\right)\left(x \frac{\d}{\d x}+1\right)
\end{equation}
is again a polynomial operator of order 3 in $x \frac{\d}{\d x}$. Now one just introduces slightly different coordinates $x_1 := x$, $x_2 := x^{3-n}$ and also passes to $\log$-coordinates $s := \log x$ (implying $x_1 = \txte^{s}$, $x_2 = \txte^{(3-n) s}$), so that
\begin{equation}\label{eq_0n3_s}
\tilde{q}\left(\frac{\d}{\d s}\right) F = \frac{x_2 (x_1-1)}{F^{n-1}} \quad 
\mbox{for } \, - \infty < s \ll -1.
\end{equation}
Using $F^\prime := \frac{\d F}{\d s}$ and $F^{\prime\prime} := \frac{\d^2 F}{\d s^2}$ as before, we re-write \eqref{eq_0n3_s} as an autonomous five-dimensional ODE
\begin{equation}\label{dyn_0n3}
\frac{\d}{\d s} \begin{pmatrix} x_1 \\ x_2 \\ F \\ F^\prime \\ F^{\prime\prime} 
\end{pmatrix} = \tilde{\mathcal F}\left(x_1,x_2,F,F^\prime,F^{\prime\prime}\right) 
:= \begin{pmatrix} x_1 \\  (3-n)x_2 \\ F^\prime \\ F^{\prime\prime} \\ 
\frac{x_2 (x_1-1)}{F^{n-1}} +F^\prime \end{pmatrix} \quad \mbox{for } \, 
- \infty < s \ll 1.
\end{equation}
Similar to the zero-contact angle case, the point $p_{\theta}:=(0,0,{\theta},0,0)$ is now a steady state of \eqref{dyn_0n3}. For the relevant regularity and asymptotics we must again study
\[
\left(x_1, x_2, F, F^\prime, 
F^{\prime\prime}\right) \to \left(0,0,{\theta},0,0\right) \qquad 
\text{as $s \to - \infty$.}
\]
The next result shows already some of the key differences between zero- and nonzero contact angle cases.
\begin{lemma}
Fix ${\theta}>0$ and consider~\eqref{dyn_0n3}. Then
\begin{equation}
\dim W^\txtu(p_{\theta}) = 3,\qquad \dim W^\txts(p_{\theta}) = 1, 
\qquad \dim W^\txtc(p_{\theta}) = 1. 
\end{equation}
Furthermore, for $n=2$ the matrix $\txtD \tilde{\mathcal F}(p_{\theta})$ is not diagonalizable, while for $n\neq 2$ it is diagonalizable.
\end{lemma}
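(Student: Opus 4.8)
The plan is to extract everything from the linearization $\txtD\tilde{\mathcal F}(p_\theta)$ at $p_\theta=(0,0,\theta,0,0)$. In $\tilde{\mathcal F}$ the $x_1$- and $x_2$-equations are already linear; the only nonlinear entry sits in the $F''$-equation, $\frac{x_2(x_1-1)}{F^{n-1}}+F'$, whose gradient at $p_\theta$ is $(0,-\theta^{1-n},0,1,0)$ — the $x_1$- and $F$-derivatives vanish because each carries a factor $x_2$. Hence
\[
\txtD\tilde{\mathcal F}(p_\theta)=\begin{pmatrix}
1&0&0&0&0\\
0&3-n&0&0&0\\
0&0&0&1&0\\
0&0&0&0&1\\
0&-\theta^{1-n}&0&1&0
\end{pmatrix}.
\]
This matrix is block lower triangular for the splitting $(x_1)\oplus(x_2)\oplus(F,F',F'')$, so its characteristic polynomial is $(\zeta-1)\,(\zeta-(3-n))\,\tilde{q}(\zeta)$ with $\tilde{q}(\zeta)=\zeta(\zeta-1)(\zeta+1)$ precisely the operator polynomial of~\eqref{def_qt}. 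The eigenvalues are therefore $0$, $-1$, $3-n$, and $1$, the last with algebraic multiplicity at least two.

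For the dimension count I use that $n\in(0,3)$ forces $3-n>0$, so, counted with algebraic multiplicity, the spectrum consists of three eigenvalues in the open right half-plane ($1$, $1$, $3-n$), one in the open left half-plane ($-1$), and the single zero eigenvalue. The standard (un)stable and center manifold theorems (cf.\ the invariant-manifold references cited earlier) then give $\dim W^\txtu(p_\theta)=3$, $\dim W^\txts(p_\theta)=1$, $\dim W^\txtc(p_\theta)=1$; this holds for \emph{every} $n\in(0,3)$, since at $n=2$ the value $3-n=1$ simply merges with the other two copies of $1$ and the eigenvalue $1$ then has algebraic multiplicity three.

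For diagonalizability the only possible obstruction is the repeated eigenvalue $1$, so it suffices to compute its geometric multiplicity. Solving $(\txtD\tilde{\mathcal F}(p_\theta)-I)v=0$, the third and fourth rows give $v_3=v_4=v_5$, whereupon the fifth row reads $-\theta^{1-n}v_2=0$, i.e.\ $v_2=0$; the surviving free parameters are $v_1$ and $v_3$. Thus the eigenspace of $1$ is two-dimensional for all $n$, spanned by $(1,0,0,0,0)^\top$ and $(0,0,1,1,1)^\top$. If $n\neq2$, the four numbers $1$, $3-n$, $0$, $-1$ are pairwise distinct (within $(0,3)$ one has $3-n\notin\{-1,0,1\}$), so together with the one-dimensional eigenspaces of $3-n$, $0$, $-1$ one obtains five independent eigenvectors and $\txtD\tilde{\mathcal F}(p_\theta)$ is diagonalizable. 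If $n=2$, the eigenvalue $1$ has algebraic multiplicity three but geometric multiplicity two by the computation above, so $\txtD\tilde{\mathcal F}(p_\theta)$ carries a nontrivial $2\times2$ Jordan block and is not diagonalizable.

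The whole argument is elementary linear algebra; the only point demanding care is the multiplicity bookkeeping at the resonant value $n=2$, where the eigenvalue $1$ receives contributions from three distinct sources (the $x_1$-equation, the factor $\tilde{q}$, and the $x_2$-equation) and one must check that only two of the associated directions are genuine eigenvectors. This coalescence is exactly the mechanism producing the logarithmic term in Theorem~\ref{th:n2}.
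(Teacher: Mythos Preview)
Your proof is correct and follows essentially the same approach as the paper's own argument: compute the linearization, read off the eigenvalues, and deduce the manifold dimensions and (non-)diagonalizability from the eigenvalue structure. The paper is terser---it merely asserts that diagonalizability for $n\neq2$ is ``easy to check'' and that a nontrivial Jordan block appears for $n=2$---whereas you actually carry out the kernel computation for the eigenvalue $1$, which is a welcome addition and makes the $n=2$ obstruction fully explicit.
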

\begin{proof}
The linearization of $\tilde{\mathcal F}$ at $p_{\theta}$ is
\begin{equation}\label{def_ta}
\tilde{A}:=\txtD \tilde{\mathcal F}(p_{\theta}) = \begin{pmatrix} 1 & 0 & 0 & 0 & 0 \\ 0 & 3-n & 0 & 0 & 0 \\ 0 & 0 & 0 & 1 & 0 \\ 0 & 0 & 0 & 0 & 1 \\ 0 & -{\theta}^{1-n} & 0 & 1 & 0 \end{pmatrix}
\end{equation}
with eigenvalues
\[
\tilde{\lambda}_1=1,\quad \tilde{\lambda}_2=3-n, \quad \tilde{\lambda}_3=1, \quad \tilde{\lambda}_4=0, \quad\tilde{\lambda}_5=-1.
\]
It is easy to check that $\tilde{A}$ is diagonalizable if $n\neq 2$. However, for $n=2$ there exists a triple eigenvalue $\tilde\lambda_1 = \tilde\lambda_2 = \tilde\lambda_3 = 1$. In this case, the Jordan canonical form for $\tilde{A}$ has a block of the form 
\begin{equation}\label{Jblock}
B:=\begin{pmatrix} 1 & 1 \\ 0 & 1 \end{pmatrix}.
\end{equation}
In both cases, the dimensions of the stable-, unstable-, and center manifolds follow from the eigenvalue configuration.
\end{proof}
The first main difference is that now the unstable manifold is three-dimensional. The second issue is illustrated by the Jordan block~\eqref{Jblock}. It is an indication that one must be aware of potential logarithmic terms. Indeed, if one solves a system of the form $\frac{\txtd \zeta}{\txtd s} = B \zeta$ for $\zeta=\zeta(s)\in\R^2$, then this yields
\begin{equation}
\label{eq:linsolven2}
\zeta_1(s)=c_1 \txte^s+c_2 s \txte^s,\qquad \text{for constants $c_1,c_2\in\R$.} 
\end{equation}
However, since $s = \log x$ by construction, we get $\zeta_1=c_1 x+c_2 x \log x$. Therefore, already on the linear level we see for $n=2$ a typical resonance effect. Presumably our approach is one of the easiest ways to see dynamically, why certain logarithmic expansion terms may be relevant for the thin-film equation near the contact line.
\begin{lemma}
\label{lem:umfldequat}
For $n\in(0,3)$ the unstable manifold $W^\txtu(p_{\theta})$ can locally be written as the graph of a mapping $\tilde{g}:\R^3\ra \R^2$, which is locally analytic in a tubular neighborhood of the center manifold. In particular, we have
\begin{equation}\label{eq_f_n_ne_2}
F'=\tilde{g}_1(x_1,x_2,F)=\tilde{g}_1(x,x^{3-n},F),
\end{equation}
where $\tilde{g}_1$ is locally analytic and $\tilde{g}_1(0,0,\theta)=0$. 
\end{lemma}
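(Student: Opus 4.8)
The plan is to mimic the structure of the proof of Proposition~\ref{prop:steady_0n32}, adapting it to the non-diagonalizable situation at $n=2$ and the higher-dimensional unstable manifold. First I would identify the linear center eigenspace of $\tilde A$ at $p_\theta$: since $\tilde\lambda_4 = 0$ has eigenvector $(0,0,1,0,0)^\top$ (the $F$-direction), we get $E^\txtc(p_\theta) = \{x_1 = x_2 = F' = F'' = 0\}$, and as in the zero-angle case this coordinate axis consists entirely of equilibria of the full nonlinear system~\eqref{dyn_0n3}, so the center manifold $W^\txtc(p_\theta)$ coincides with it and is trivially analytic (and the $n=2$ Jordan block sits in the \emph{unstable} directions, not the center direction, so it does not obstruct this step). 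Then, exactly as before, since $W^\txtc(p_\theta)$ is a curve of equilibria, a sufficiently small tubular neighborhood $\cT$ of it is foliated by the stable and unstable manifolds $W^\txts(p_\theta)$, $W^\txtu(p_\theta)$; because $\tilde{\mathcal F}$ is real-analytic near $p_\theta$ for $\theta > 0$ (the only non-smoothness is $F^{1-n}$, which is analytic away from $F = 0$), the unstable manifold is real-analytic by the standard analytic unstable manifold theorem, cf.~\cite[p.~330]{CoddingtonLevinson}, uniformly along the tube.

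Next I would argue that $W^\txtu(p_\theta)$ can be written as a graph over the three unstable coordinate directions. The unstable eigenvalues are $\tilde\lambda_1 = \tilde\lambda_3 = 1$ and $\tilde\lambda_2 = 3-n > 0$ (for $n \in (0,3)$); one computes the associated (generalized) eigenvectors and checks that the unstable subspace $E^\txtu(p_\theta)$ projects isomorphically onto the $(x_1, x_2, F)$-coordinate space — this is the analogue of the non-orthogonality statement in Proposition~\ref{prop:steady_0n32}, and needs to be verified for both $n \neq 2$ and $n = 2$ (where one of the eigenvectors is replaced by a generalized eigenvector from the $B$-block). Assuming this, the analytic graph representation gives $W^\txtu(p_\theta) = \{(x_1, x_2, F, F', F'') : (F', F'') = \tilde g(x_1, x_2, F)\} \cap \cT$ with $\tilde g : \R^3 \to \R^2$ analytic, and in particular $F' = \tilde g_1(x_1, x_2, F)$. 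Finally, since $0 \le x \ll 1$, any admissible trajectory must converge to a single equilibrium $p_\theta$ as $s \to -\infty$ (this is where one invokes that the thin-film solution has $H = \theta x(1 + o(1))$, hence $F \to \theta$), and by the local foliation it lies on $W^\txtu(p_\theta)$; evaluating the graph at the equilibrium gives $\tilde g_1(0,0,\theta) = 0$ since $F' = 0$ there. Substituting $x_1 = x$, $x_2 = x^{3-n}$ yields the displayed form~\eqref{eq_f_n_ne_2}.

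The main obstacle I anticipate is the graph/projection step: one must confirm that the unstable eigenspace (generalized eigenspace at $n=2$) is genuinely transverse to the $(F', F'')$-plane, so that $(x_1, x_2, F)$ are valid graph coordinates — unlike the zero-angle case where only two coordinates $(x_1, x_2)$ were used, here the eigenvalue $0$ in the center direction forces $F$ itself into the base of the graph, and one has to make sure no degeneracy (e.g.\ a vanishing determinant for some $n$ in the range) spoils this; a secondary subtlety is carefully invoking the analytic-dependence / foliation results \emph{uniformly} along the center manifold of equilibria rather than at a single point, which is exactly the same reasoning already used (and cited) in Proposition~\ref{prop:steady_0n32} and so can be reused verbatim. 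I would not expect the $n = 2$ Jordan block to cause trouble for \emph{this} lemma — it only matters later, for the asymptotic expansion where the logarithmic resonance term appears — since the existence and analyticity of $W^\txtu$ do not require diagonalizability.
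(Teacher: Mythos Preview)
Your proposal is correct and follows essentially the same approach as the paper: both proceed by identifying the center manifold as the line of equilibria, invoking the analytic unstable-manifold theorem along this line, and then verifying via the (generalized) eigenvectors of $\tilde A$ that $E^\txtu(p_\theta)$ is a graph over $(x_1,x_2,F)$, with a separate check at $n=2$ using the Jordan-block generalized eigenvector. The paper simply carries out the explicit eigenvector computation you anticipate (yielding the tangent-space equations~\eqref{es_n_ne_2}, which in particular confirm the transversality you flag as the main obstacle, uniformly in $n\in(0,3)$), while your proposal states the plan and correctly identifies which verifications are needed.
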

\begin{proof}
First, note that the center manifold is again a one-dimensional line of steady states, so we proceed in a similar manner as in Section~\ref{ssec:cw}. However, we use a case distinction for the mobility exponents and start with the case $n\neq 2$. In this case, the same arguments as in Section~\ref{ssec:cw} still apply with suitable modifications. We have to consider the, now three-dimensional, unstable manifold $W^\txtu(p_{\theta})$ and the associated tangent space $E^\txtu(p_{\theta})$ spanned by
\begin{align*}
\tilde{V}_1&= \left(1,0,0,0,0\right)^\top, \\
\tilde{V}_2&= \left(0, (n-2) (n-3) (n-4) {\theta} ^{n-1},1,3-n,(3-n)^2\right)^\top, \\
\tilde{V}_3&= (0,0,1,1,1)^\top.
\end{align*}
Therefore, the tangent space is determined by the set of equations
\begin{subequations}\label{es_n_ne_2}
\begin{align}
F^\prime &= - \frac{{\theta}^{1-n}}{(n-3)(n-4)} x_2 + F - \theta, \\
F^{\prime\prime} &= \frac{{\theta}^{1-n}}{n-3} x_2 + F - {\theta}.
\end{align}
\end{subequations}
It is now immediate from~\eqref{es_n_ne_2} that we can parametrize $W^\txtu(p_{\theta})$ by $(x_1,x_2,F)$ if $n\neq 2$. From~\eqref{es_n_ne_2} we can derive
\begin{equation}\label{eq_f_n_ne_2a}
x\frac{\txtd F}{\txtd x}=\frac{\txtd F}{\txtd s}=F'=\tilde{g}_1(x_1,x_2,F)
=\tilde{g}_1(x,x^{3-n},F).
\end{equation}
However, note that for $n=2$, the linearization is given by~\eqref{def_ta} and the eigenvalues are $\tilde{\lambda}_{1,2,3,4,5} = 1,1,1,0,-1$. The  eigenvectors associated to the three unstable eigenvalues $\tilde{\lambda}_{1,2,3}= 1,1,1$ are
\begin{align*}
\tilde{V}_1 = \left(1,0,0,0,0\right)^\top \quad \mbox{and} \quad \tilde{V}_3 = (0,0,1,1,1)^\top,
\end{align*}
where one needs another vector to span the generalized eigenspace associated to the triple eigenvalue $1$ with the aforementioned nontrivial Jordan block. Calculating a generalized eigenvector $\tilde{V}_2$ amounts to solving $(\tilde{A}-1\cdot\textnormal{Id})^2 \tilde{V}_2 = 0$, which has one solution given by
\[
\tilde{V}_2 = (0,2{\theta},2,1,0)^\top.
\]
The generalized eigenvectors span the unstable generalized eigenspace for the steady state $p_{\theta}$, given by the set of equations
\begin{align*}
F^\prime &= - \frac{1}{2 {\theta}} x_2 + F - \theta, \\
F^{\prime\prime} &= - \frac{1}{\theta} x_2 + F - {\theta},
\end{align*}
which is the same as~\eqref{es_n_ne_2} for $n = 2$. Consequently, equation~\eqref{eq_f_n_ne_2} remains satisfied.
\end{proof}
Note that we are in this case \emph{not able} to directly determine the local expansion near the contact line from~\eqref{eq_f_n_ne_2} as we have a differential equation for $F$, and not an algebraic equation as in~\eqref{eq:asymp1}. However, in analogy to Lemma~\ref{lem:ntcw}, one may show that the analytic map for the unstable manifold is nontrivial.
\begin{lemma}
\label{lem:nontrivtg}
The function $\tilde{g}$ is non-trivial. In particular, we have
\begin{equation}\label{der_n_ne_2}
\frac{\partial \tilde{g}_1}{\partial x_1}(0,0,{\theta})= 0, \qquad \frac{\partial \tilde{g}_1}{\partial x_2}(0,0,{\theta})= - \frac{{\theta}^{1-n}}{(n-3)(n-4)}, \qquad \frac{\partial \tilde{g}_1}{\partial F}(0,0,{\theta})=1.
\nonumber
\end{equation}
\end{lemma}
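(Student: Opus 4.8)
The plan is to read off the three partial derivatives directly from the tangent-space description of the unstable manifold already established in Lemma~\ref{lem:umfldequat}. By that lemma, $W^\txtu(p_\theta)$ is locally the analytic graph
\[
\left(F^\prime, F^{\prime\prime}\right) = \left(\tilde g_1(x_1,x_2,F),\tilde g_2(x_1,x_2,F)\right),
\]
and its tangent plane at $p_\theta$ is the unstable (generalized) eigenspace $E^\txtu(p_\theta)$, which is cut out by the pair of linear equations~\eqref{es_n_ne_2} — valid for all $n\in(0,3)$, including $n=2$, as shown at the end of the proof of Lemma~\ref{lem:umfldequat}. Since $\tilde g_1$ is $C^1$ and its graph is tangent to $E^\txtu(p_\theta)$ at $p_\theta=(0,0,\theta,0,0)$, the linear part of $\tilde g_1$ at that point must coincide with the first of the two linear relations in~\eqref{es_n_ne_2}, namely
\[
F^\prime = -\frac{\theta^{1-n}}{(n-3)(n-4)}\,x_2 + (F-\theta).
\]
First I would make this tangency argument explicit: write $\tilde g_1(x_1,x_2,F) = \partial_{x_1}\tilde g_1(p_\theta)\,x_1 + \partial_{x_2}\tilde g_1(p_\theta)\,x_2 + \partial_F\tilde g_1(p_\theta)\,(F-\theta) + o(|x_1|+|x_2|+|F-\theta|)$, and compare coefficients with the displayed linear relation. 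Matching the $x_1$-coefficient gives $\partial_{x_1}\tilde g_1(0,0,\theta)=0$, the $x_2$-coefficient gives $\partial_{x_2}\tilde g_1(0,0,\theta) = -\theta^{1-n}/((n-3)(n-4))$, and the $(F-\theta)$-coefficient gives $\partial_F\tilde g_1(0,0,\theta)=1$. In particular $\tilde g_1$ depends non-trivially on $x_2$ (as $(n-3)(n-4)\neq 0$ for $n\in(0,3)$) and on $F$, so $\tilde g$ is not identically the trivial constant map, which is the "non-trivial" assertion.

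For the case distinction one checks that nothing changes at $n=2$: the generalized eigenvectors $\tilde V_1,\tilde V_2=(0,2\theta,2,1,0)^\top,\tilde V_3$ computed in Lemma~\ref{lem:umfldequat} span exactly the same plane~\eqref{es_n_ne_2} evaluated at $n=2$, so the coefficient formulas above extend continuously to $n=2$ (there one has $-\theta^{-1}/((2-3)(2-4)) = -\theta^{-1}/2$, matching the explicit relation $F^\prime = -\tfrac{1}{2\theta}x_2 + F - \theta$ derived in the proof of Lemma~\ref{lem:umfldequat}). Thus the single formula in~\eqref{der_n_ne_2} is uniform in $n\in(0,3)$.

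I do not expect a genuine obstacle here; the only point requiring a little care is justifying that the first-order Taylor coefficients of the analytic graph map are precisely the coefficients appearing in the linear equations defining $E^\txtu(p_\theta)$. This is the standard fact that the tangent space to an invariant manifold at a fixed point is the corresponding invariant subspace of the linearization, combined with the elementary observation that if a graph $\{b = g(a)\}$ is tangent at the origin to a linear subspace given as a graph $\{b = La\}$, then $Dg(0)=L$. Both of these are immediate, so the proof is essentially a transcription of~\eqref{es_n_ne_2} into derivative form, exactly in the spirit of Lemma~\ref{lem:ntcw}.
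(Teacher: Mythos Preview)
Your proposal is correct and is precisely the intended argument: the paper states this lemma without proof, leaving it as an immediate consequence of the tangent-space equations~\eqref{es_n_ne_2} derived in Lemma~\ref{lem:umfldequat}, exactly parallel to Lemma~\ref{lem:ntcw}. You have simply made explicit the coefficient comparison between the linear part of the graph map $\tilde g_1$ and the first equation of~\eqref{es_n_ne_2}, including the verification that the $n=2$ case is covered by the same formula.
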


\subsection{Nonzero dynamic contact angles: resonances} \label{ssec:pw1}
To describe the actual trajectories we are interested in, it remains to study the non-autonomous ODE~\eqref{eq_f_n_ne_2} for $F=F(x)$. In particular, we would like to know when~\eqref{eq_f_n_ne_2} has analytic solutions, and if so, in which variables we have to consider the expansion. To explain dynamically why we expect the values $3-\frac1m$ for $m\in\N$ to be special, it is helpful to consider the full three-dimensional flow on the unstable manifold $W^\txtu(p_{\theta})$ given by
\begin{equation}
\label{3Dunstable}
\frac{\d}{\d s} \begin{pmatrix} x_1 \\ x_2 \\ u  \end{pmatrix}= \begin{pmatrix} x_1 \\  (3-n)x_2 \\ 
\tilde{g}_1(x_1,x_2,u + {\theta}) \end{pmatrix}=:\mathcal{A}\begin{pmatrix} x_1 \\ x_2 \\ u\end{pmatrix}
+\cN(x_1,x_2,u),
\end{equation}
where $u := F - {\theta}$ for some fixed $\theta>0$, $\mathcal{A}$ denotes the linear part, and $\cN(x_1,x_2,u)$ the nonlinear part. Regarding analyticity, one must ask whether~\eqref{3Dunstable} has locally convergent power series expansions as solutions near the origin. The question of convergence of formal power series and analytic equivalence of ODEs is a general problem~\cite{ArnoldGeomODE,Bibikov}. Let $q\in(\N_0)^3=(\N\cup\{0\})^3$, define $|q|:=\sum_{k=1}^3q_k$, and denote by
\[
\tilde \Lambda := \left(\tilde \lambda_1, \tilde \lambda_2, \tilde \lambda_3\right)^\top = \left(1, 3-n, 1\right)^\top
\]
the column vector containing the eigenvalues of $\mathcal{A}$. If
\begin{equation}
\label{eq:nonresonance}
q^\top \tilde \Lambda - \tilde \lambda_k\neq 0 \quad \mbox{for all} \quad k \in \{1,2,3\} \quad \mbox{and} \quad q \in \left(\N_0\right)^3 \quad \mbox{such that} \quad \verti{q} \ge 2,
\end{equation}
then the system~\eqref{3Dunstable} is called \emph{non-resonant}. If there exist $q$ and an index $k$ such that $q^\top \tilde \Lambda= \tilde \lambda_k$, then we have a \emph{resonance}~\cite{Bibikov}. It is well-known that resonances are connected to the failure of analytic equivalence and the non-removability of nonlinear polynomial factors with multi-index exponent $q$; see for example the assumptions in \cite[Thm.~3.2]{Bibikov}. For our case, the resonances can be computed.
\begin{lemma}\label{lem:reson}
If $n\in(0,3)$ then~\eqref{3Dunstable} is resonant in the following situations:
\begin{itemize}
 \item[(R1)] For $n=3-\frac1m$ with $m\in\N$, resonances occur with $q=(0,q_2,0)^\top$ for $q_2\geq 2$.
 \item[(R2)] If $n=2$, a resonance also occurs for $q \in \left\{(2,0,0)^\top, (1,0,1)^\top,(0,0,2)^\top\right\}$.
\end{itemize} 
\end{lemma}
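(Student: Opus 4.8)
The plan is to verify the two resonance conditions by a direct computation of $q^\top\tilde\Lambda-\tilde\lambda_k$ for $\tilde\Lambda=(1,3-n,1)^\top$, $k\in\{1,2,3\}$, and then argue that these are the only solutions with $|q|\ge 2$. Write $q=(q_1,q_2,q_3)^\top$ with $q_j\in\N_0$. Then $q^\top\tilde\Lambda=q_1+q_3+(3-n)q_2$, so the resonance equation $q^\top\tilde\Lambda=\tilde\lambda_k$ reads $q_1+q_3+(3-n)q_2=1$ for $k\in\{1,3\}$ and $q_1+q_3+(3-n)q_2=3-n$ for $k=2$.

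First I would handle the case $k\in\{1,3\}$. The equation is $(q_1+q_3-1)+(3-n)q_2=0$. If $q_2=0$ then $q_1+q_3=1$, which forces $|q|=1$, excluded. Hence $q_2\ge 1$, and then $3-n=\frac{1-(q_1+q_3)}{q_2}$; since $n\in(0,3)$ we have $3-n\in(0,3)$, which is positive, so $q_1+q_3<1$, i.e.\ $q_1=q_3=0$. Thus $3-n=\frac1{q_2}$, i.e.\ $n=3-\frac1{q_2}$ with $q_2\in\N$, $q_2\ge2$ (since $q_2=1$ gives $n=2$, and for $n=2$ we still want $q_2\ge 2$ here because $|q|\ge2$; note $q=(0,1,0)$ has $|q|=1$). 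This is exactly (R1): for $n=3-\frac1m$ with $m\in\N$ the multi-indices $q=(0,q_2,0)^\top$ with $q_2$ a multiple of $m$ and $q_2\ge2$ produce resonances — and in particular every such $n<2$ (i.e.\ $m\ge2$) is covered, while $n=2$ ($m=1$) is the boundary case where $q_2=m=1$ would be the ``missing'' index. I would state (R1) precisely as: resonances occur with $q=(0,q_2,0)^\top$ whenever $q_2\ge2$ is an integer multiple of $m$ — matching the statement in the lemma.

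Next I would treat $k=2$: $q_1+q_3+(3-n)q_2=3-n$, i.e.\ $q_1+q_3+(3-n)(q_2-1)=0$. Since $q_1+q_3\ge0$ and $3-n>0$, both terms are nonnegative only if $q_1=q_3=0$ and $q_2=1$, but that is $|q|=1$, excluded — \emph{unless} $3-n$ is rational and allows a trade-off, which cannot happen since $q_2-1\ge -1$ but if $q_2=0$ we get $q_1+q_3=3-n$, forcing $3-n\in\N$, i.e.\ $n=2$ (as $n\in(0,3)$ rules out $3-n\in\{2,3\}$ wait $n=1$ gives $3-n=2$); more carefully: $q_1+q_3=(3-n)(1-q_2)$, and for $q_2\ge2$ the right side is negative while the left is $\ge0$, impossible; for $q_2=1$ we need $q_1+q_3=0$, excluded by $|q|\ge2$; for $q_2=0$ we need $q_1+q_3=3-n\in\N_{\ge2}$, so $3-n=2$ ($n=1$) or $3-n$ is not in $\{2,3,\dots\}$ otherwise — hmm, $n=1$ gives $3-n=2\in\N$, so $q=(2,0,0),(1,0,1),(0,0,2)$ would resonate for $n=1$ too. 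I need to recheck whether the lemma intends (R2) only at $n=2$; rereading, at $n=2$ we have $\tilde\lambda_2=3-n=1=\tilde\lambda_1=\tilde\lambda_3$, so the $k=2$ equation coincides with the $k\in\{1,3\}$ equation $q_1+q_3+q_2=1$, which has no solution with $|q|\ge2$; the genuine new resonances at $n=2$ come instead from $q^\top\tilde\Lambda=q_1+q_2+q_3=1$? No — with all eigenvalues equal to $1$, resonance means $q_1+q_2+q_3=1$, impossible for $|q|\ge2$. So (R2) must be read differently: at $n=2$ the relevant degenerate (non-semisimple) structure means the ``resonance'' relevant for normal-form obstructions is $q^\top\tilde\Lambda=\tilde\lambda_k$ with the \emph{generalized} eigenvalue $1$; here $q^\top\tilde\Lambda$ with $\tilde\Lambda=(1,1,1)$ gives $|q|=1$, so actually the resonances listed as $(2,0,0),(1,0,1),(0,0,2)$ have $q^\top(1,1,1)=2\ne1$ — so these are resonances of the form $q^\top\tilde\Lambda=2\tilde\lambda_k$? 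I would reconcile this by noting that for $n=2$, because of the Jordan block, one must also consider whether $q^\top\tilde\Lambda$ equals eigenvalues of higher multiplicity structures; but the cleanest reading is that the list in (R2) is simply the set of $q$ with $|q|=2$ and all three eigenvalues equal to $1$, for which the normal-form obstruction at second order is non-removable precisely because $\tilde A$ is not diagonalizable — the ``resonance count'' $q^\top\tilde\Lambda=2$ coincides with $\tilde\lambda_k+\tilde\lambda_j$ and interacts with the nilpotent part.

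Given this subtlety, the structure of the write-up I propose is: (i) do the clean algebra for $k\in\{1,3\}$ establishing (R1) exactly as above; (ii) observe that for $n\ne2$ the equation $q^\top\tilde\Lambda=\tilde\lambda_2=3-n$ with $|q|\ge2$ forces (as shown) $q_2=0$, $q_1+q_3=3-n\in\N$, which among $n\in(0,3)$ happens only for $n=1$ or $n=2$ — but $n=1$ gives distinct eigenvalues $1,2,1$ and a genuine resonance $q=(2,0,0)$ etc.\ with $2=\tilde\lambda_2$; I would then note that the lemma's (R2) as stated singles out $n=2$, so I would either (a) check the paper's convention that at $n=1$ the explicit solution \eqref{n1_explicit} is polynomial hence those resonances are ``trivially resolved'' and excluded from interest, or (b) interpret (R2) as listing the extra resonances that appear \emph{on top of} and \emph{because of} the Jordan block at $n=2$, where $3-n=1$ so $q^\top\tilde\Lambda=|q|$ and the second-order resonances $|q|=2$ combine with the nilpotent block. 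I expect the main obstacle to be exactly this bookkeeping at $n=2$: carefully stating in what sense $(2,0,0),(1,0,1),(0,0,2)$ are resonant (they satisfy $q^\top\tilde\Lambda=2$, and $2$ is not an eigenvalue, so the resonance must be understood through the generalized/Jordan structure, or through the fact that $3-n=1$ makes $\tilde\lambda_2$ coincide with $\tilde\lambda_1=\tilde\lambda_3$ and thus $q^\top\tilde\Lambda-\tilde\lambda_k$ ranges over $\Z$), and making sure the case analysis for $n\ne 2,\,n\ne 3-\frac1m$ indeed yields no solutions — which it does, since for such $n$ the number $3-n$ is either irrational (no nonzero integer combination $q_1+q_3+(3-n)q_2$ can equal $1$ or $3-n$ except the trivial ones) or rational with denominator not dividing appropriately, and one checks $n=1$ separately as the polynomial case. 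The remaining steps — plugging into the definition of resonance and citing \cite{Bibikov} for the consequence — are routine.
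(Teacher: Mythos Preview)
Your treatment of (R1) is correct and follows the same route as the paper: for $k\in\{1,3\}$ one forces $q_1=q_3=0$, $q_2\ge 2$, and $n=3-1/q_2$. One minor correction: for a fixed $n=3-1/m$ the resonance occurs precisely at $q_2=m$, not at every multiple of $m$; the lemma's phrase ``$q_2\ge 2$'' merely records that $m\ge 2$ is needed (for $m=1$, i.e.\ $n=2$, the would-be resonant index $q_2=1$ is excluded by $|q|\ge 2$).

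For (R2) your confusion is warranted, and your algebra is right. In the $k=2$ case with $q_2=0$ the equation reads $q_1+q_3=3-n$, so with $|q|=q_1+q_3\ge 2$ and $n\in(0,3)$ one gets $q_1+q_3=2$ and hence $n=1$. The paper's proof writes ``$n=3-q_1-q_2$'' (evidently a typo for $3-q_1-q_3$) and then asserts $n=2$, which does not follow. Indeed, at $n=2$ all three eigenvalues equal $1$, so $q^\top\tilde\Lambda=|q|\ge 2\ne 1$ for every admissible $q$, and there is \emph{no} resonance in the sense of~\eqref{eq:nonresonance}. The listed triples $(2,0,0)$, $(1,0,1)$, $(0,0,2)$ satisfy $q^\top\tilde\Lambda=\tilde\lambda_2$ at $n=1$, not at $n=2$. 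So drop the Jordan-block speculation entirely; you are not missing a hidden convention. What is genuinely special about $n=2$ is the non-diagonalizable linearization discussed just before the lemma, which is a separate phenomenon from the eigenvalue resonance defined here. The resonance you correctly found at $n=1$ is real but harmless for the paper's program: the nonlinearity in~\eqref{3Dunstable} appears only in the third component, so only resonances with $k\in\{1,3\}$ obstruct the normal form, and indeed the non-resonant fixed-point construction in \S\ref{ssec:pw2} (whose denominators are $k+(3-n)\ell+p-1$) goes through at $n=1$ without difficulty, consistent with the explicit polynomial solution~\eqref{n1_explicit}.
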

\begin{proof}
The existence of resonances can be computed from the eigenvalues. To find resonances we have to solve (cf.~\eqref{eq:nonresonance})
\begin{equation}
\label{eq:reson}
q_1 + (3-n)q_2 + q_3 - \tilde\lambda_k=0,\quad n\in(0,3),\quad k\in\{1,2,3\}, \quad q\in(\N_0)^3, \quad \verti{q} \ge 2.
\end{equation}
Basically, \eqref{eq:reson} are three linear Diophantine equations with constraints, or alternatively viewed, constrained integer programming problems. Here we can solve the problem explicitly going through the three cases $k\in\{1,2,3\}$.

\medskip

We start with $k \in \{1,3\}$, so resonances occur if and only if
\begin{equation}\label{eq:reseq1}
q_1 + (3-n)q_2 + q_3 = 1 \quad \mbox{for} \quad \verti{q} \ge 2.
\end{equation}
If $q_2 = 0$ this implies $q_1 + q_3 = 1$ with $q_1 + q_3 \ge 2$ and no resonances occur. So suppose $q_2 \ne 0$: then one finds from~\eqref{eq:reseq1} that
\begin{equation}\label{eq:nsolve1}
n=\frac{q_1 + 3 q_2 + q_3 - 1}{q_2}.
\end{equation}
From the constraint $n\in(0,3)$ it follows that
\[
0 < q_1 + 3 q_2 + q_3-1< 3q_2 \quad \Rightarrow \quad q_1 + q_3< 1.
\]
Therefore, we must have $q_1=0=q_3$ and $q_2\geq 2$. From~\eqref{eq:nsolve1} it follows that $n=\frac{3 q_2 -1}{q_2}=3-\frac{1}{q_2}$ with $q_2\ge 2$. This gives all the resonances claimed in (R1) except for the one at $n=2$, and no other resonances for $k \in \{1,3\}$.

\medskip

Next, we focus on the case $k=2$, that is, resonances are present if
\begin{equation}\label{eq:reseq2}
q_1 + (3-n)q_2 + q_3 = 3-n \quad \mbox{for} \quad \verti{q} \ge 2.
\end{equation}
For $q_2 = 1$, \eqref{eq:reseq2} yields $q_1 + q_3 = 0$ and $q_1 + q_3 \ge 2$, so that no resonances occur. Otherwise, we may solve for $n$ and obtain
\[
n = \frac{q_1 + 3 q_2 + q_3 - 3}{q_2 - 1}.
\]
From the constraint $n \in (0,3)$ we conclude
\begin{equation}\label{eq:nsolve2}
0 < \frac{q_1 + 3 q_2 + q_3 - 3}{q_2 - 1} < 3.
\end{equation}
For $q_2 = 0$ we have $n = 3 - q_1 - q_2$, which leads to the resonance at $n = 2$ proving (R2). For $q_2 \ge 2$, the denominator in \eqref{eq:nsolve2} is positive and we obtain the constraints
\[
0 < q_1 + 3 q_2 + q_3 - 3 < 3 q_2 - 3.
\]
In particular $q_1 + q_3 < 0$ has to hold and therefore no further resonances can occur for $k = 2$.
\end{proof}

Lemma~\ref{lem:reson}, in addition to the discussion for $n=2$ resulting in~\eqref{eq:linsolven2}, clearly indicates that the values $n=3-\frac1m$ for $m\in\N$ should be special. In Section~\ref{ssec:pw2}, we are going to give a rigorous proof using a fixed-point argument for the existence and asymptotic expansion in the resonant and non-resonant cases. Here we briefly comment on the case (R1).

\medskip

The Poincar\'e-Dulac Theorem~\cite{ArnoldGeomODE} provides a way to \emph{formally conjugate} a system with resonances to a normal form. Condition (R1) leads to a normal form
\begin{equation}\label{formalnform}
\frac{\d}{\d s} \begin{pmatrix} \tilde x_1 \\ \tilde x_2 \\ w \end{pmatrix}= \begin{pmatrix} m \tilde x_1  \\  \tilde x_2 + mc_1 \tilde x_2^{q_2}\\ mw
\end{pmatrix}
\end{equation}
for some constant $c_1\in\R$ upon using a coordinate change $\left(\tilde x_1,\tilde x_2,w\right)=\Upsilon(x_1,x_2,u)$ and a time-rescaling $s\mapsto ms$. However, one may only express $\Upsilon$ via a \emph{formal series expansion} and one has to prove whether the series converges yielding an analytic change of coordinates, or whether the series even provides a topological equivalence of the original and transformed vector fields~\cite{Bibikov}. If the coordinate change would be analytic, one may simply transform~\eqref{formalnform} back to original coordinates, which yields the equation
\begin{equation}
\label{eq:resbad}
\frac{\txtd u}{\txtd s}=mu+\cG(x_1,x_2,u)
\end{equation}
where $\cG(x_1,x_2,u)$ is of order two or higher in $u$. Converting back to the $x$-coordinate via $s=\log x$, one ends up with a lowest-order system of the form
\begin{equation}
\label{eq:resbad1}
x\frac{\txtd u}{\txtd x}-mu=\cG(x,x^{1/m},u)
\end{equation}
which does generically lead to logarithmic terms in the solution for $u$; we are going to pick this observation up again in equation~\eqref{eq_u_res}. Although the argument is very appealing to obtain the expansion results near resonances, the problem remains that the Poincar\'e-Dulac Theorem does \emph{not} provide an analytic conjugacy in general~\cite{Bruno}. Therefore, and for the sake of a self-contained presentation that captures the dependence on parameters, we are going to use a fixed-point technique designed for the particular problem.

\subsection{Nonzero dynamic contact angles: fixed-point problem}
\label{ssec:pw2}
Recall from Lemma~\ref{lem:umfldequat} that it remains to study a one-dimensional non-autonomous ODE. Using
\begin{equation}\label{u_theta_0}
u=F - {\theta}
\end{equation}
it follows from~\eqref{eq_f_n_ne_2} that we have
\begin{equation}\label{eq_u_n_ne_2}
\left(x \frac{\d}{\d x} - 1\right) u = G\left(x_1,x_2,u\right) \quad \mbox{where} \quad x_1 = x, \quad x_2 = x^{3-n},
\end{equation}
and $G(x_1,x_2,u) := \tilde{g}_1(x_1,x_2,u+{\theta}) - u$. In particular $G(0,0,0) = \frac{\partial G}{\partial u}(0,0,0) = 0$ (cf.~\eqref{der_n_ne_2}). For $n \notin \left\{3 - \frac 1 m: \, m = 1, 2, 3, \cdots\right\}$ (following the strategy in~\cite[\S 2]{ggo.2013}) we may study~\eqref{eq_u_n_ne_2} by treating $x_1$ and $x_2$ as independent variables, replacing $u = u(x)$ by $v = v(x_1,x_2)$ such that
\begin{equation}\label{identify}
v\left(x, x^{3-n}\right) = u(x),
\end{equation}
and substituting the operator $x \frac{\d}{\d x}$ by $x_1 \partial_{x_1} + (3-n) x_2 \partial_{x_2}$, since indeed
\[
\left(x_1 \partial_{x_1} + (3-n) x_2 \partial_{x_2}\right) v\left(x_1, x_2\right) = x \frac{\d u}{\d x}(x) \quad \mbox{if} \quad  \left(x_1,x_2\right) = \left(x, x^{3-n}\right).
\]
The resulting problem reads
\begin{equation}\label{eq_v_n_ne_2}
\left(x_1 \partial_{x_1} + (3-n) x_2 \partial_{x_2} - 1\right) v = G\left(x_1,x_2,v\right) \quad \mbox{around} \quad (x_1,x_2) = (0,0).
\end{equation}
Then through~\eqref{identify}, constructing a solution to~\eqref{eq_v_n_ne_2} automatically yields a solution to~\eqref{eq_u_n_ne_2}. Observe that the regular function $x_1$ is in the kernel of the linear operator in \eqref{eq_v_n_ne_2}. Hence, we may impose the boundary conditions
\begin{equation}\label{bc_nonres}
\left(v, \partial_{x_1} v\right) = (0,b) \quad \mbox{at} \quad (x_1,x_2) = (0,0),
\end{equation}
where $b$ determines the curvature of the solution $H$ to problem~\eqref{ode_third2}\&\eqref{bcx1} and will be used as a shooting parameter in Section~\ref{sec:global} to match condition~\eqref{bcx2}. Since all characteristics of the differential operator $x_1 \partial_{x_1} + (3-n) x_2 \partial_{x_2} - 1$ meet in the origin $(x_1, x_2) = (0,0)$, the boundary conditions \eqref{bc_nonres} are sufficient in order to obtain existence and uniqueness to problem~\eqref{eq_v_n_ne_2}-\eqref{bc_nonres}. However, the dependence of $v$ on the parameter $b$ is implicit, and in order to make it \emph{explicit}, we may further unfold according to
\begin{equation}\label{identify_2}
w(x_1,x_2,x_3) + x_3 = v(x_1,x_2) \quad \mbox{with} \quad x_3 = b x_1,
\end{equation}
so that
\[
\left(x_1 \partial_{x_1} + (3-n) x_2 \partial_{x_2} + x_3 \partial_{x_3}\right) w + x_3 = \left(x_1 \partial_{x_1} + (3-n) x_2 \partial_{x_2}\right) v  \quad \mbox{if} \quad x_3 = b x_1.
\]
The resulting problem is
\begin{subequations}\label{prob_w_nonres}
\begin{align}
\left(x_1 \partial_{x_1} + (3-n) x_2 \partial_{x_2} + x_3 \partial_{x_3} - 1\right) w &= G\left(x_1,x_2,w+x_3\right) \quad \mbox{around} \quad (x_1,x_2,x_3) = (0,0,0), \label{eq_w_nonres}\\
\left(w,\partial_{x_1} w, \partial_{x_3} w\right) &= (0,0,0) \quad \mbox{at} \quad (x_1,x_2,x_3) = (0,0,0). \label{bc_w_nonres}
\end{align}
\end{subequations}
Indeed, as $b$ does \emph{not} appear in \eqref{prob_w_nonres} anymore, $w$ is \emph{independent} of $b$.
\begin{proposition}[non-resonant case]\label{prop:nonres}
Suppose $n \in (0,3)$ and $n\neq 3-\frac1m$ for $m\in\N$. Then for $0 < \eps \ll 1$ problem~\eqref{prob_w_nonres} has an analytic solution $w=w(x_1,x_2,x_3)$ for $(x_1,x_2,x_3)\in[0,\eps] \times [0,\eps^2] \times [-\eps,\eps]$.
\end{proposition}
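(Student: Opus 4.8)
The plan is to solve~\eqref{prob_w_nonres} by a power-series/fixed-point argument on the scale of analytic functions, exploiting the fact that the linear transport operator $L := x_1 \partial_{x_1} + (3-n) x_2 \partial_{x_2} + x_3 \partial_{x_3} - 1$ is diagonal on monomials. First I would expand $w$ formally as $w = \sum_{q \in (\N_0)^3, |q| \ge 2} w_q\, x_1^{q_1} x_2^{q_2} x_3^{q_3}$ (the constraint $|q|\ge 2$ coming from the boundary conditions~\eqref{bc_w_nonres}, which kill all constant and linear terms, together with the fact that $G(x_1,x_2,w+x_3)$ has no terms of order $\le 1$ once one notes $G(0,0,0)=\partial_u G(0,0,0)=0$ and $x_3$ enters $G$ only through $u=w+x_3$ at quadratic order or via the analytic dependence of $\tilde g_1$). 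Applying $L$ to a monomial $x^q$ multiplies it by the eigenvalue $\Lambda(q) := q_1 + (3-n) q_2 + q_3 - 1$, so the formal recursion reads $\Lambda(q)\, w_q = [\text{$q$-th coefficient of } G(x_1,x_2,w+x_3)]$, and the right-hand side depends only on $w_{q'}$ with $|q'| < |q|$ since $G$ is at least quadratic. The non-resonance hypothesis is exactly what guarantees $\Lambda(q)\neq 0$ for all $|q|\ge 2$: indeed $\Lambda(q) = 0$ forces $q_1 + (3-n)q_2 + q_3 = 1$, which by the computation in Lemma~\ref{lem:reson} (case $k\in\{1,3\}$, since here the right-hand eigenvalue shift is the same $\tilde\lambda=1$) happens only at $n = 3 - 1/q_2$ with $q_2 \ge 2$, the excluded values. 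Hence the formal series is uniquely determined.

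The substantive part is proving convergence of this formal series on a polydisc, and here I would follow the method of majorants, essentially the same device used in~\cite[\S 2]{ggo.2013}. The key quantitative input is a lower bound $|\Lambda(q)| \ge c > 0$ uniform in $q$ with $|q| \ge 2$: since $\Lambda(q)$ takes values in a discrete-plus-one-parameter set and is bounded away from $0$ precisely because $n$ is at positive distance from the resonant values $\{3-1/m\}$, one gets such a $c = c(n) > 0$. (One should check $\Lambda(q)\to\infty$ as $|q|\to\infty$ except along the $q_2$-direction, but along that direction $\Lambda(0,q_2,0) = (3-n)q_2 - 1$ is also bounded below by a positive constant when $n < 3$; so $c(n)>0$ is genuine.) With this, dividing the recursion by $\Lambda(q)$ costs only a factor $c^{-1}$, and one can dominate $w$ by the solution $W$ of the scalar majorant equation $c\,W = \Phi(x_1,x_2,W+x_3)$, where $\Phi$ is an analytic majorant of $G$ with nonnegative Taylor coefficients; by the analytic implicit function theorem $W$ is analytic near the origin with $W(0,0,0)=0$, and its Taylor coefficients dominate those of $w$ termwise. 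This yields a polydisc of convergence, which after rescaling in each variable gives the stated domain $[0,\eps]\times[0,\eps^2]\times[-\eps,\eps]$; the asymmetric scaling $\eps^2$ in the $x_2$-slot simply reflects that when one later sets $x_2 = x^{3-n}$ and $x_1 = x$ with $3-n$ possibly large, or when one wants the $x_2$-variable to be genuinely smaller, it is convenient (and harmless) to shrink that coordinate further. Finally, analyticity of $w$ implies it solves~\eqref{eq_w_nonres} classically, and the boundary conditions~\eqref{bc_w_nonres} hold by construction.

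The main obstacle I anticipate is the uniform lower bound on $|\Lambda(q)|$ and the attendant majorant bookkeeping: one must be careful that the right-hand side $G(x_1,x_2,w+x_3)$, when expanded, does not reintroduce dangerous low-order terms (it does not, because $G$ is quadratic in its third argument near $0$ and $x_3$ appears only there), and that the majorant series for $G$ can be chosen with radius of convergence controlled independently of the shooting parameter $b$ — which is the whole point of the unfolding~\eqref{identify_2}, since $b$ has been absorbed into the independent variable $x_3$ and no longer appears in~\eqref{eq_w_nonres}. A secondary technical point is handling the borderline behavior of $\Lambda$ along the pure-$x_2$ ray for $n$ close to (but not equal to) a resonant value $3-1/m$: there $c(n)\to 0$ as $n\to 3-1/m$, so the radius of convergence degenerates, but for each fixed admissible $n$ it is strictly positive, which is all that is claimed. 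Everything else is routine.
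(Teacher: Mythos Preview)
Your approach is essentially that of the paper: invert the diagonal operator $L$ on monomials using the non-resonance condition, then prove convergence of the resulting series. The paper packages this as a contraction-mapping argument in the weighted analytic norm
\[
\vertii{w} := \sum_{q\in\II} \frac{\eps^{q_1+2q_2+q_3}}{q_1!\,q_2!\,q_3!}\,\bigl|\partial^q w(0,0,0)\bigr|,
\]
showing that the solution operator $\TT$ (which multiplies the $q$-th Taylor coefficient by $\Lambda(q)^{-1}$) is bounded on this space and that $w\mapsto \TT G(x_1,x_2,w+x_3)$ contracts on a small ball. Your majorant-plus-implicit-function-theorem route achieves the same end and is a legitimate alternative; both rely on the same uniform lower bound $|\Lambda(q)|\ge c(n)>0$ over the relevant index set.

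One correction, however: the index set is not $\{|q|\ge 2\}$ but $\II=(\N_0)^3\setminus\{(0,0,0),(1,0,0),(0,0,1)\}$. The boundary conditions~\eqref{bc_w_nonres} kill only the constant, the $x_1$-linear, and the $x_3$-linear terms; the $x_2$-linear term of $w$ is present and nonzero, since $\partial_{x_2}G(0,0,0)=\partial_{x_2}\tilde g_1(0,0,\theta)\ne 0$ by Lemma~\ref{lem:nontrivtg}. This does not break your argument---$\Lambda(0,1,0)=2-n\ne 0$ is precisely the case $m=1$ of the excluded set $n=3-1/m$---but your assertion that ``$G(x_1,x_2,w+x_3)$ has no terms of order $\le 1$'' is false in the $x_2$-direction and should be amended. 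The monomials $(1,0,0)$ and $(0,0,1)$ are the genuinely dangerous ones (there $\Lambda\equiv 0$ for every $n$), and it is the structural facts $\partial_{x_1}G(0,0,0)=0$ and $\partial_u G(0,0,0)=0$ from Lemma~\ref{lem:nontrivtg} that ensure the right-hand side never feeds into them; this is the point the paper singles out explicitly when defining $\TT$.
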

\begin{proof}
 For $n \notin \left\{3 - \frac 1 m: \, m \in \N\right\}$, we may recast~\eqref{prob_w_nonres} in form of the fixed-point equation
\begin{equation}\label{fixed_v}
w = \SSS[w] := \TT G\left(x_1,x_2,w+x_3\right)
\end{equation}
using analyticity of $G$, where $x_1$ is in the kernel of the linear operator in \eqref{eq_w_nonres} and
\begin{equation}\label{def_t}
\begin{aligned}
\TT g (x_1,x_2,x_3) := \sum_{(k,\ell,p) \in \II} \frac{\partial_{x_1}^k \partial_{x_2}^\ell \partial_{x_3}^p g(0,0,0)}{k + (3-n) \ell + p - 1} x_1^k x_2^\ell x_3^p \\
\mbox{with} \quad \II = \left(\N_0\right)^3 \setminus \left\{(0,0,0), (1,0,0), (0,0,1)\right\}.
\end{aligned}
\end{equation}
Note that the denominator in the series in~\eqref{def_t} is nonzero in the \emph{non-resonant case} (i.e., for $n \in (0,3)$ if $n\neq 3-\frac1m$ with $m\in \N$). Also note that $G(x_1,x_2,x_3+w)$ (and therefore also $g$) fulfills conditions~\eqref{bc_w_nonres} because of the definition of $G$ after \eqref{eq_u_n_ne_2}, Lemma~\ref{lem:nontrivtg}, and $\partial_u G(0,0,0) = 0$. In order to construct a solution to \eqref{fixed_v}, we introduce 
\begin{equation}\label{def_norm}
\vertii{w} := \sum_{(k,\ell,p) \in \II} \frac{\eps^{k + 2 \ell +p}}{k ! \ell ! p !} \verti{\partial_{x_1}^k \partial_{x_2}^\ell \partial_{x_3}^p w(0,0,0)} \quad \mbox{with} \quad \eps > 0,
\end{equation}
which is a sub-multiplicative norm on the space of all analytic $w$ with $\vertii{w} < \infty$ meeting conditions~\eqref{bc_w_nonres}.

\medskip

We first show that the map $\SSS$ is a self-map in $\left\{w: \, \vertii{w} \le \delta\right\}$ for $0 < \eps \ll \delta \ll 1$. Indeed, the linear operator $\TT$ (cf.~\eqref{def_t}) allows for the estimate $\vertii{\TT g} \lesssim \vertii{g}$, i.e., there exists a constant $C >0$ (depending only upon $n$) such that $\vertii{\TT g} \leq C \vertii{g}$ holds uniformly in the small parameters. Therefore, we may conclude that
\begin{equation}
\label{eq:fixedest1}
\vertii{\SSS[w]} \stackrel{\eqref{fixed_v}}{\lesssim} \vertii{G\left(x_1,x_2,w+x_3\right)} \lesssim \eps^2 + \delta^2 \quad \mbox{for} \quad \vertii{w} \le \delta \quad \mbox{and} \quad \eps, \delta \le 1;
\end{equation}
observe that the second inequality in~\eqref{eq:fixedest1} follows just looking at the leading-order terms of $G$ and employing sub-multiplicativity of $\vertii{\cdot}$. This shows our claim for $0 < \eps \ll \delta  \ll 1$.

\medskip

By the same reasoning also
\begin{eqnarray*}
\vertii{\SSS\left[w^{(1)}\right] - \SSS\left[w^{(2)}\right]} &\stackrel{\eqref{fixed_v}}{\lesssim}& \vertii{G\left(x_1,x_2,w^{(1)}+x_3\right)-G\left(x_1,x_2,w^{(2)}+x_3\right)} \\
&\lesssim& (\eps + \delta) \vertii{w^{(1)} - w^{(2)}}
\end{eqnarray*}
for $\vertii{w^{(j)}} \le \delta$ and $\eps, \delta \le 1$. Then the contraction property follows for $0 <  \eps, \delta \ll 1$.

\medskip

The contraction-mapping theorem yields existence of a solution $w$ to~\eqref{fixed_v} with finite norm $\vertii{w}$, hence an analytic solution $w = w(x_1,x_2,x_3)$ for $(x_1,x_2,x_3) \in [0,\eps] \times [0,\eps^2] \times [-\eps,\eps]$.
\end{proof}
Next, we turn our attention to the resonant case for which $(3-n)^{-1} =: m = 1, 2, 3, \cdots$. Then equation~\eqref{eq_u_n_ne_2} changes to
\begin{equation}\label{eq_u_res}
\left(y \frac{\d}{\d y} - m\right) u = G\left(y,u\right) \quad \mbox{where} \quad y = x^{\frac 1 m}
\end{equation}
and $G(y,u) := m \left(\tilde{g}_1\left(y^m,y,u+{\theta}\right) - u\right)$. Equation~\eqref{eq_u_res} only \emph{seems} to be structurally simpler than~\eqref{eq_u_n_ne_2}, as logarithmic terms may appear. This can be easily seen by studying the corresponding linear problem
\[
\left(y \frac{\d}{\d y} - m\right) u = g(y) \quad \mbox{where} \quad y = x^{\frac 1 m}.
\]
Assuming the simple and generic case $g(y) = y^m$, we obtain solutions of the form $u(y) = y^m \log y + C y^m$ with a constant $C \in \R$. Hence it appears natural to replace $u = u(y)$ by $v = v(y_1,y_2)$ with
\begin{equation}\label{identify_y}
v(y_1,y_2) = u(y) \quad \mbox{if} \quad (y_1,y_2) = \left(y^m \log y,y\right),
\end{equation}
and to substitute the operator $y \frac{\d}{\d y}$ by $\left(m y_1 + y_2^m\right) \partial_{y_1} + y_2 \partial_{y_2}$, so that
\[
\left(\left(m y_1 + y_2^m\right) \partial_{y_1} + y_2 \partial_{y_2}\right) v\left(y_1, y_2\right) = y \frac{\d u}{\d y} (y) \quad \mbox{if} \quad  \left(y_1,y_2\right) = \left(y^m \log y, y\right).
\]
Instead of~\eqref{eq_u_res} we may solve
\begin{equation}\label{eq_u_res_un}
\left(\left(m y_1 + y_2^m\right) \partial_{y_1} + y_2 \partial_{y_2} - m\right) v = G(y_2,v) \quad \mbox{around} \quad (y_1,y_2) = (0,0).
\end{equation}
Notice that the monomial $y_2^m$ is in the kernel of the linear operator in~\eqref{eq_u_res_un} and we will indeed construct solutions meeting the boundary conditions
\begin{equation}\label{bc_res}
\left(v, \partial_{y_2}^m v\right) = \left(0, b \, m !\right) \quad \mbox{at} \quad (y_1,y_2) = (0,0).
\end{equation}
Again, $b$ determines the curvature of the solution $H$ to problem~\eqref{ode_third2}\&\eqref{bcx1} and will be used as a shooting parameter in Section~\ref{sec:global} to meet condition~\eqref{bcx2}.  Also note that only boundary conditions in the single point $(y_1,y_2) = (0,0)$ have to be assumed as all characteristics of the linear operator $\left(m y_1 + y_2^m\right) \partial_{y_1} + y_2 \partial_{y_2}$ end in (emanate from) this single point.

\medskip

As a last step, we make the dependence on $b$ explicit, by identifying
\begin{equation}\label{identify_y_2}
w(y_1,y_2,y_3) + y_3 = v(y_1,y_2) \quad \mbox{if} \quad y_3 = b y_2^m,
\end{equation}
so that
\[
\left(\left(m y_1 + y_2^m\right) \partial_{y_1} + y_2 \partial_{y_2} + m y_3 \partial_{y_3}\right) w + m y_3 = \left(\left(m y_1 + y_2^m\right) \partial_{y_1} + y_2 \partial_{y_2}\right) v \quad \mbox{if} \quad x_3 = b y_2^m.
\]
The resulting problem has the following form:
\begin{subequations}\label{prob_w_res}
\begin{align}
\left(\left(m y_1 + y_2^m\right) \partial_{y_1} + y_2 \partial_{y_2} + m y_3 \partial_{y_3} - m\right) w &= G\left(y_2,w+y_3\right) \quad \mbox{around} \quad (y_1,y_2,y_3) = (0,0,0),\\
\left(w,\partial_{y_2}^m w, \partial_{y_3} w\right) &= (0,0,0) \quad \mbox{at} \quad (y_1,y_2,y_3) = (0,0,0). \label{bc_w_res}
\end{align}
\end{subequations}
%
\begin{proposition}[resonant case]\label{prop:res}
Suppose $n \in (0,3)$ and $n=3-\frac1m$ for $m\in\N$. Then for $0 < \eps \ll 1$, problem~\eqref{prob_w_res} has an analytic solution $w=w(y_1,y_2,y_3)$ for $(y_1,y_2,y_3) \in [0,\eps^2] \times [0,\eps^2] \times [-\eps,\eps]$. 
\end{proposition}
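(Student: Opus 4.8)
The plan is to mimic the contraction-mapping scheme of the proof of Proposition~\ref{prop:nonres}, recasting problem~\eqref{prob_w_res} as a fixed-point equation $w = \SSS[w] := \TT\,G(y_2,w+y_3)$, where $\TT$ is a right-inverse of the linear operator
\[
\LL := \left(m y_1 + y_2^m\right)\partial_{y_1} + y_2\partial_{y_2} + m y_3\partial_{y_3} - m
\]
on a space of analytic germs obeying the boundary conditions~\eqref{bc_w_res}. The essential new feature --- compared to~\eqref{def_t} --- is that the monomial $y_2^m$ multiplying $\partial_{y_1}$ renders $\LL$ non-diagonal on monomials:
\[
\LL\!\left(y_1^k y_2^\ell y_3^p\right) = \left(m k+\ell+m p-m\right) y_1^k y_2^\ell y_3^p + k\, y_1^{k-1} y_2^{\ell+m} y_3^p .
\]
Thus $\LL$ leaves invariant each \emph{chain} $\{(k,\ell,p):\, m k+\ell=c\}$ (with $c,p\in\N_0$ fixed), which is finite because $\ell\ge0$, and on which $\LL$ acts upper-triangularly with the \emph{constant} diagonal entry $c+m(p-1)$. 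First I would therefore define $\TT$ by inverting $\LL$ chain by chain: starting at the top of a chain (largest $k$, where the off-diagonal term drops out) and descending in $k$, one solves the triangular recursion $(\TT g)_{k,\ell,p}=\big(c+m(p-1)\big)^{-1}\big((g)_{k,\ell,p}-(k+1)(\TT g)_{k+1,\ell-m,p}\big)$.

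Next I would dispose of the resonant chains. By Lemma~\ref{lem:reson} the diagonal entry $c+m(p-1)$ vanishes exactly on $\{(0,0,1)\}$ and on $\{(0,m,0),(1,0,0)\}$; on the former $\LL y_3=0$, on the latter $\LL y_2^m=0$ and $\LL y_1=y_2^m$. Inverting along these chains forces the solvability conditions $(g)_{(0,0,1)}=(g)_{(1,0,0)}=0$ --- both automatic for $g=G(y_2,w+y_3)$ because $G(0,0)=\partial_u G(0,0)=0$ by Lemma~\ref{lem:nontrivtg} --- leaves the coefficients $(\TT g)_{(0,0,1)}$ and $(\TT g)_{(0,m,0)}$ free, and \emph{determines} $(\TT g)_{(1,0,0)}=(g)_{(0,m,0)}$. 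The three coefficients singled out by~\eqref{bc_w_res}, namely $(0,0,0)$, $(0,m,0)$, $(0,0,1)$, are set to zero --- consistently, the first because $(g)_{(0,0,0)}=G(0,0)=0$, the latter two because they are the free directions on the resonant chains. The transfer of the $y_2^m$-part of the right-hand side into a $y_1$-term is exactly the unfolded incarnation of the resonant logarithm (compare the linear picture~\eqref{eq:linsolven2}--\eqref{eq:resbad1}) and is why the unfolding~\eqref{identify_y}--\eqref{identify_y_2} in the variable $y_1=y^m\log y$ is needed. With these conventions $\SSS$ maps the closed subspace of analytic $w$ with $w(0)=\partial_{y_2}^m w(0)=\partial_{y_3}w(0)=0$ into itself, and the solvability conditions of the chain inversion hold for every such $w$.

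The hard part will be to show that $\SSS$ is a self-map and a contraction on a small ball $\{\vertii{w}\le\delta\}$ for $0<\eps\ll\delta\ll1$, in a weighted sub-multiplicative norm $\vertii{\cdot}$ of the type used for Proposition~\ref{prop:nonres} (cf.~\eqref{def_norm}). Because $\LL$ is non-diagonal, one cannot read off a bound for $\TT$ as a supremum of reciprocals of eigenvalues; instead, unravelling the chain recursion expresses $(\TT g)_{k,\ell,p}$ as a \emph{finite} alternating sum $\sum_{j\ge0}(-1)^j\big(c+m(p-1)\big)^{-j-1}\tfrac{(k+j)!}{k!}\,(g)_{k+j,\ell-jm,p}$ along each non-resonant chain, and reindexing shows $\vertii{\TT g}$ to be dominated by $\sum_{k,\ell,p}|(g)_{k,\ell,p}|\rho_1^k\rho_2^\ell\rho_3^p$ times a factor bounded by $\tfrac{1}{|D|}\sum_j\big(\tfrac{k}{|D|}\big)^j\big(\tfrac{\rho_2^m}{\rho_1}\big)^j$ with $D=c+m(p-1)$; since $k\le 2|D|$ off the resonant chains, this geometric-type factor is controlled once $\rho_1$ is not too small relative to $\rho_2^m$. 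The resonant chain $\{(0,m,0),(1,0,0)\}$ has to be handled separately: it sends the $y_2^m$-coefficient of $g$ to a $y_1$-coefficient, and since for $g=G(y_2,w+y_3)$ that coefficient stays of size $\cO(1)$, this costs only an $\cO(\rho_1)$ contribution to $\vertii{\SSS[w]}$ and to the Lipschitz constant. Choosing the polydisc radii $\rho_1,\rho_2,\rho_3$ so that all of these requirements hold simultaneously, while tracking their dependence on the small parameters, is the technical crux; it is also exactly what this tailor-made fixed-point technique buys us over the merely formal Poincar\'e-Dulac normalization~\eqref{formalnform}.

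Once these estimates are in place, the conclusion follows as in Proposition~\ref{prop:nonres}: analyticity of $G$ together with $G(0,0)=\partial_u G(0,0)=0$ gives $\vertii{G(y_2,w+y_3)}\lesssim\eps^2+\delta^2$ for $\vertii{w}\le\delta$ --- the only term linear in the unfolding variables being the one proportional to $y_2$, all $w$- and $y_3$-dependent terms being at least quadratic --- so $\SSS$ is a self-map of $\{\vertii{w}\le\delta\}$ for $0<\eps\ll\delta\ll1$, and likewise $\vertii{\SSS[w^{(1)}]-\SSS[w^{(2)}]}\lesssim(\eps+\delta)\vertii{w^{(1)}-w^{(2)}}$, so $\SSS$ is a contraction for $0<\eps,\delta\ll1$. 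The contraction-mapping theorem then yields a unique fixed point $w$ with $\vertii{w}<\infty$, that is, an analytic solution of~\eqref{prob_w_res} on $[0,\eps^2]\times[0,\eps^2]\times[-\eps,\eps]$.
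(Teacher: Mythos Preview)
Your proposal is essentially the same approach as the paper's proof: both recast \eqref{prob_w_res} as a fixed-point problem $w=\SSS[w]=\TT\,G(y_2,w+y_3)$, build the right-inverse $\TT$ by triangular inversion along the finite chains $\{(k,\ell,p):mk+\ell=c\}$ on which $\LL$ acts with constant diagonal $c+m(p-1)$, dispose of the two resonant chains via the solvability conditions you state, and then close a contraction in a weighted $\ell^1$-type norm on Taylor coefficients. The paper simply writes the chain recursion as an induction in $\ell$ (its \eqref{def_tb}) and unravels it to the same alternating sum you wrote down; its chosen weight $\eps^{2mk+2\ell+p}$ corresponds to $\rho_1=\rho_2^m$, so the geometric factor $(\rho_2^m/\rho_1)^j$ in your estimate equals $1$ and the required uniform bound on $\sum_j$ comes instead from the $1/|D|$ prefactor together with $k+j\lesssim |D|$ along each chain, which is exactly the content of the paper's estimate $\frac{\prod_{i=1}^j(k+i)}{|D|^{j+1}}\lesssim\frac{1}{\ell}$.
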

\begin{proof}
Note that~\eqref{eq_u_res_un} can be converted into the fixed-point problem
\begin{equation}\label{fixed_v_res}
w = \SSS[w] := \TT G\left(y_1,w + y_3\right),
\end{equation}
where $\TT$ is the uniquely defined linear solution operator to
\begin{subequations}\label{lin_res}
\begin{align}
\left(\left(m y_1 + y_2^m\right) \partial_{y_1} + y_2 \partial_{y_2} + m y_3 \partial_{y_3} - m\right) \TT g &= g,\\
\left(g,\partial_{y_1} g, \partial_{y_3} g, \TT g, \partial_{y_2}^m \TT g, \partial_{y_3} \TT g\right) &= (0,0,0,0,0,0) \quad \mbox{at} \quad (y_1,y_2,y_3) = (0,0,0).
\end{align}
\end{subequations}
The second and third boundary condition $\partial_{y_1} g(0,0,0) = 0$ and $\partial_{y_3} g(0,0,0) = 0$ can be assumed as for $g$ replaced by $G(y_2,v) = G(y_2,w + y_3)$ an explicit dependence in $y_1$ does \emph{not} occur, $\partial_v G(0,0) = 0$, and $w(0,0,0) = 0$. We may use the power series representation
\begin{align*}
g(y_1,y_2,y_3) &= \sum_{(k,\ell,p) \in \left(\N_0\right)^3} \frac{\partial_{y_1}^k \partial_{y_2}^\ell \partial_{y_3}^p g(0,0,0)}{k! \ell ! p !} y_1^k y_2^\ell y_3^p, \\
\TT g(y_1,y_2,y_3) &= \sum_{(k,\ell,p) \in \left(\N_0\right)^3} \frac{\partial_{y_1}^k \partial_{y_2}^\ell \partial_{y_3}^p \TT g(0,0,0)}{k! \ell ! p !} y_1^k y_2^\ell y_3^p,
\end{align*}
where
\begin{align*}
\partial_{y_1}^k \partial_{y_2}^\ell \partial_{y_3}^p g(0,0,0) &= 0 \quad \mbox{for} \quad (k,\ell,p) \in \left(\N_0\right)^3 \setminus \II, \\
\mbox{with} \quad \II &:= \left(\N_0\right)^3 \setminus \left\{(0,0,0),(1,0,0),(0,0,1)\right\}, \\
\partial_{y_1}^k \partial_{y_2}^\ell \partial_{y_3}^p \TT g(0,0,0) &= 0 \quad \mbox{for} \quad (k,\ell,p) \in \left(\N_0\right)^3 \setminus \JJ,\\
\mbox{with} \quad \JJ &:= \left(\N_0\right)^3 \setminus \left\{(0,0,0),(0,m,0),(0,0,1)\right\},
\end{align*}
leading to
\begin{eqnarray*}
\lefteqn{\left(\left(m y_1 + y_2^m\right) \partial_{y_1} + y_2 \partial_{y_2} + m y_3 \partial_{y_3} - m\right) \TT g (y_1,y_2)} \\
&=& \sum_{(k,\ell,p) \in \left(\N_0\right)^3} \frac{(m (k +p-1)+ \ell) \partial_{y_1}^k \partial_{y_2}^\ell \partial_{y_3}^p \TT g(0,0,0)}{k! \ell ! p !} y_1^k y_2^\ell y_3^p \\
&& + \sum_{\ell \ge m} \frac{\ell \cdots (\ell-m+1) \partial_{y_1}^{k+1} \partial_{y_2}^{\ell-m} \partial_{y_3}^p \TT g(0,0,0)}{k ! \ell ! p !} y_1^k y_2^\ell y_3^p.
\end{eqnarray*}
Inserted into~\eqref{lin_res}, a comparison of coefficients leads to
\begin{subequations}\label{comp_coeff}
\begin{equation}
(m (k+p-1) + \ell) \partial_{y_1}^k \partial_{y_2}^\ell \partial_{y_3}^p \TT g(0,0,0) = \partial_{y_1}^k \partial_{y_2}^\ell \partial_{y_3}^p g(0,0,0)
\end{equation}
for $\ell < m$ and
\begin{equation}
\begin{aligned}
& (m (k+p-1) + \ell) \partial_{y_1}^k \partial_{y_2}^\ell \partial_{y_3}^p \TT g(0,0,0) + \ell \cdots (\ell-m+1) \partial_{y_1}^{k+1} \partial_{y_2}^{\ell-m} \partial_{y_3}^p \TT g(0,0,0) \\
&\quad = \partial_{y_1}^k \partial_{y_2}^\ell \partial_{y_3}^p g(0,0,0)
\end{aligned}
\end{equation}
\end{subequations}
for $\ell \ge m$. Now we may define the operator $\TT$ using~\eqref{comp_coeff} by setting
\begin{subequations}\label{def_tb}
\begin{align}\label{def_t_1}
\partial_{y_1}^k \partial_{y_2}^\ell \partial_{y_3}^p \TT g(0,0,0) &:= \frac{\partial_{y_1}^k \partial_{y_2}^\ell \partial_{y_3}^p g(0,0,0)}{m (k+p-1) + \ell} \quad \mbox{for} \quad (k,\ell,p) \in \II \quad \mbox{with} \quad \ell < m \\
\partial_{y_1} \TT g(0,0,0) &:= \frac{\partial_{y_2}^m g(0,0,0)}{m!},\label{def_t_2}
\end{align}
and then defining inductively in $\ell$
\begin{equation}\label{def_t_3}
\partial_{y_1}^k \partial_{y_2}^\ell \partial_{y_3}^p \TT g(0,0,0) := \frac{\partial_{y_1}^k \partial_{y_2}^\ell \partial_{y_3}^p g(0,0,0) - \ell \cdots (\ell-m+1) 
\partial_{y_1}^{k+1} \partial_{y_2}^{\ell-m} \partial_{y_3}^p \TT g(0,0,0)}{m (k+p-1) + \ell}
\end{equation}
\end{subequations}
for $\ell \ge m$ with $(k,\ell,p) \ne (0,m,0)$. This procedure uniquely determines the operator $\TT$ for functions $g = g(y_1,y_2,y_3)$ that are analytic in a neighborhood of the origin $(y_1,y_2,y_3) = (0,0)$ with $g(0,0,0) = \partial_{y_{1}} g(0,0,0) = \partial_{y_3} g(0,0,0) = 0$. We can also prove estimates for $\TT$ (with constants only depending on $n$): From \eqref{def_t_1} and \eqref{def_t_2} it is immediate that
\begin{subequations}\label{est_t_0}
\begin{equation}
\frac{\verti{\partial_{y_1}^k \partial_{y_2}^\ell \partial_{y_3}^p \TT g(0,0,0)}}{k! \ell ! p !} \lesssim \frac{\verti{\partial_{y_1}^k \partial_{y_2}^\ell \partial_{y_3}^p g(0,0,0)}}{k ! \ell ! p !} \quad \mbox{for} \quad (k,\ell,p) \in \II \quad \mbox{with} \quad \ell < m.
\end{equation}
From~\eqref{def_t_2} trivially
\begin{equation}
\verti{\partial_{y_1} \TT g(0,0,0)} \lesssim \frac{\verti{\partial_{y_2}^m g(0,0,0)}}{m!}.
\end{equation}
\end{subequations}
For $\ell \ge m$ and $(k,\ell,p) \ne (0,m,0)$, we obtain from~\eqref{def_tb} and~\eqref{est_t_0} by induction
\begin{subequations}\label{est_t_1}
\begin{equation}
\begin{aligned}
\frac{\verti{\partial_{y_1}^k \partial_{y_2}^\ell \partial_{y_3}^p \TT g(0,0,0)}}{k! \ell ! p !} &\le \sum_{m j \le \ell} \frac{\prod_{i = 1}^j (k+i)}{\verti{m (k-1) + \ell}^{j+1}} \frac{\verti{\partial_{y_1}^{k+j} \partial_{y_2}^{\ell-m j} \partial_{y_3}^p g(0,0,0)}}{(k+j) ! (\ell-mj) ! p !} \\
& \lesssim \frac 1 \ell \sum_{m j \le \ell} \frac{\verti{\partial_{y_1}^{k+j} \partial_{y_2}^{\ell-mj} \partial_{y_3}^p g(0,0,0)}}{(k+j) ! (\ell-mj) ! p !},
\end{aligned}
\end{equation}
where $k \ne 0$ or $\ell \notin m \N_1$ or $p \ne 0$ (note that $g(0,0,0) = 0$). In the particular case $k = 0$ and $\ell \in m \N_1$ and $p = 0$, an analogous computation shows 
\begin{equation}
\frac{\verti{\partial_{y_2}^\ell \TT g(0,0,0)}}{\ell !} \lesssim \frac 1 \ell \sum_{j = 0}^{\frac \ell m - 1} \frac{\verti{\partial_{y_1}^j \partial_{y_2}^{\ell-mj} g(0,0,0)}}{j ! (\ell-mj) !} + \frac{\verti{\partial_{y_2}^m g(0,0,0)}}{m!},
\end{equation}
\end{subequations}
Estimates~\eqref{est_t_0} and \eqref{est_t_1} imply
\begin{equation}\label{est_t_y}
\vertii{\TT g} \lesssim \vertii{g} \quad \mbox{with} \quad \vertii{w} := \sum_{(k,\ell,p) \in \JJ} \frac{\eps^{2 m k + 2 \ell + p}}{k! \ell! p!} \verti{\partial_{y_1}^k \partial_{y_2}^\ell \partial_{y_3}^p w(0,0,0)}.
\end{equation}
Using the linear estimate \eqref{est_t_y}, the contraction can be verified in almost exactly the same way as in the non-resonant case and yields existence of an analytic solution $w = w(y_1,y_2,y_3)$ of \eqref{fixed_v_res} in the cuboid $(y_1, y_2,y_3) \in [0,\eps^2] \times [0,\eps^2] \times [-\eps,\eps]$.
\end{proof}
In the following corollary, we discuss the dependence on the parameter $b$:
\begin{corollary}\label{cor:exp_hb}
Let $H$ denote the solution to problem~\eqref{ode_third2}\&\eqref{bcx1} with $\theta > 0$. Then
\begin{equation}\label{as_hdhd2h_theta}
H\left(\eps \verti{b}^{-1}\right) \sim \eps \verti{b}^{-1}, \qquad \frac{\d H}{\d x}\left(\eps \verti{b}^{-1}\right) \sim 1, \qquad \mbox{and} \qquad \frac{\d^2 H}{\d x^2}\left(\eps \verti{b}^{-1}\right) \sim b
\end{equation}
for $0 < \eps \ll 1$ and $\verti{b} \gg_\eps 1$, where $b$ determines $H$ through \eqref{bc_nonres} or \eqref{bc_res}, respectively.
\end{corollary}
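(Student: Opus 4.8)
The plan is to trace the asymptotics of $H$ through the chain of coordinate changes set up in Sections~\ref{ssec:pw} and~\ref{ssec:pw2}, and evaluate them at the specific point $x = \eps\verti{b}^{-1}$. Recall that $H = xF$ with $F = \theta + u$, and $u$ is recovered from the analytic solution $w$ constructed in Proposition~\ref{prop:nonres} (non-resonant case) or Proposition~\ref{prop:res} (resonant case) via the unfoldings~\eqref{identify}, \eqref{identify_2}, \eqref{bc_nonres} in the non-resonant case and~\eqref{identify_y}, \eqref{identify_y_2}, \eqref{bc_res} in the resonant case. In both cases the relevant structural fact is that $u(x) = b\,\varphi(x) + (\text{higher order})$, where $\varphi(x) = x$ in the non-resonant case (from $x_3 = b x_1 = bx$) and $\varphi(x) = x\log x$ in the resonant case (from $y_3 = b y_2^m = b x$, with an extra $\log$ generated by the resonance), while the $w$-part is quadratically small. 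Since $w$ has a convergent power series on the cuboid of size $\eps$ (resp.\ $\eps^2$) in the respective variables and $w$ vanishes to second order at the origin, the contribution of $w$ to $u$ at $x = \eps\verti{b}^{-1}$ is $\cO(\eps^2\verti{b}^{-2})$, hence negligible compared with the leading $b\varphi(x) = \pm\eps + \cO(\eps\log(\eps/\verti{b}))$-type term once $\verti{b}\gg_\eps 1$.

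First I would write out, in the non-resonant case, the identity $u(x) = x_3 + w(x, x^{3-n}, b x) = b x + w(x, x^{3-n}, bx)$ and substitute $x = \eps\verti{b}^{-1}$: the first term is $\pm\eps$ and the second is bounded by $\vertii{w}$ evaluated with the small parameters $(\eps, \eps^2, \eps)$, which by the construction in Proposition~\ref{prop:nonres} is $\cO(\eps^2)$. Hence $F = \theta + u = \theta + \cO(\eps)$ and $H = xF = \eps\verti{b}^{-1}(\theta + \cO(\eps)) \sim \eps\verti{b}^{-1}$ for $0<\eps\ll1$. For $\d H/\d x = F + x F' = F + u'$, I would use that $x\tfrac{\d u}{\d x} = F' = \tilde g_1(x, x^{3-n}, F) = u + G(x,x^{3-n},u)$ from~\eqref{eq_u_n_ne_2}, with $G$ vanishing to second order; combined with $u = bx + \cO(\eps^2)$ this gives $x u'(x) = bx + \cO(\eps^2)$, so $u'(x) = b + \cO(\eps)$ at $x = \eps\verti{b}^{-1}$, whence $\d H/\d x = \theta + \cO(\eps) + b + \cO(\eps)$; since $\verti{b}\gg_\eps1$ dominates $\theta$, we get $\d H/\d x \sim 1$ (treating $b+\theta \sim 1$ only makes sense if one also assumes $b$ is comparable to a constant — more precisely the claim as stated is that this quantity is bounded above and below by constants, which holds for $\eps$ fixed and $b$ ranging, using that the $\cO(\eps)$ errors are uniform). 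For $\d^2 H/\d x^2 = 2F' /x + x \tfrac{\d^2 u}{\d x^2}$ — better, differentiate $H = xF$ twice directly: $\d^2 H/\d x^2 = 2 u'/x + u''x$, no: I would instead use $\tfrac{\d^2 H}{\d x^2} = \tfrac{1}{x}\bigl(q(x\tfrac{\d}{\d x})H\bigr)$-type reductions, or simply note $x^2\tfrac{\d^2 H}{\d x^2} = (x\tfrac{\d}{\d x})(x\tfrac{\d}{\d x}-1)H = (x\tfrac{\d}{\d x})(x\tfrac{\d}{\d x}-1)(xF)$, and since $(x\tfrac{\d}{\d x}-1)(xF) = x\,x\tfrac{\d F}{\d x} = x F'$ and then $(x\tfrac{\d}{\d x})(xF') = xF' + x(x\tfrac{\d}{\d x}F') = xF' + x F'' $, giving $x^2 H'' = x(F' + F'')$, i.e. $x H'' = F' + F''$; now $F' = u' x$-wait, $F' = x\tfrac{\d F}{\d x}$ in $\log$ coordinates equals $\tfrac{\d F}{\d s}$, so this must be reconciled carefully — the cleanest route is to use the tangent-space equations~\eqref{es_n_ne_2} directly, which give $F'' = \tfrac{\theta^{1-n}}{n-3}x_2 + u$ and $F' = -\tfrac{\theta^{1-n}}{(n-3)(n-4)}x_2 + u$ on the unstable manifold, so both $F'$ and $F''$ are $u + \cO(x^{3-n}) = bx + \cO(\eps^2)$. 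Converting, $x^2\tfrac{\d^2 H}{\d x^2} = $ (a polynomial in $x\tfrac{\d}{\d x}$ applied to $xF$) $= x F' + x F''$ wait I need $\tfrac{\d^2}{\d x^2} = \tfrac{1}{x^2}(x\tfrac{\d}{\d x})(x\tfrac{\d}{\d x}-1)$, so $\tfrac{\d^2 H}{\d x^2} = \tfrac{1}{x^2}(x\tfrac{\d}{\d x})(x\tfrac{\d}{\d x}-1)(xF)$; since $(x\tfrac{\d}{\d x}-1)(xF) = x\cdot x\tfrac{\d F}{\d x} = x F'$ (writing $F' := \tfrac{\d F}{\d s} = x\tfrac{\d F}{\d x}$), then $(x\tfrac{\d}{\d x})(xF') = xF' + x F''$, so $\tfrac{\d^2 H}{\d x^2} = \tfrac{1}{x}(F' + F'')$, and with $F'+F'' = 2bx + \cO(\eps^2 \verti{b}^{-2})$-no, $\cO(x^{3-n}) = \cO((\eps\verti{b}^{-1})^{3-n})$ plus $\cO(\eps^2\verti{b}^{-2})$ from $u$'s remainder, evaluating at $x = \eps\verti{b}^{-1}$ gives $\tfrac{\d^2 H}{\d x^2} = \tfrac{2b\eps\verti{b}^{-1} + \cO(\eps\verti{b}^{-1}(\eps\verti{b}^{-1})^{2-n})}{\eps\verti{b}^{-1}} = 2b + \cO((\eps\verti{b}^{-1})^{2-n})$, which is $\sim b$ provided the error is lower order; this needs $\eps\verti{b}^{-1}\ll1$ and care when $n$ is close to $3$, but since $b$ is treated as large-and-essentially-fixed and $\eps$ small, the conclusion follows.

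For the resonant case $n = 3 - 1/m$ the argument is identical in spirit: now $u(x) = y_3 + w(y^m\log y, y, b y^m) = b x + w(x\log x, x^{1/m}, bx)$ with $x = y^m$, $y_3 = b y_2^m = b y^m = bx$; evaluated at $x = \eps\verti{b}^{-1}$ the leading term is again $\pm\eps$ and the $w$-remainder is $\cO(\eps^2)$ by Proposition~\ref{prop:res} (the logarithmic coordinate $y_1 = x\log x$ is itself $\cO(\eps^2\verti{b}^{-1}\log(\eps^{-1}\verti{b}))$, comfortably inside $[0,\eps^2]$ once $\verti{b}\gg_\eps1$), so $F = \theta + \cO(\eps)$ and $H \sim \eps\verti{b}^{-1}$. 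The derivative computations go through verbatim using~\eqref{es_n_ne_2} (valid also at $n=2$, hence for all resonant $n$ one treats the $m=1$ case, $n=2$, separately but with the same tangent-space equations) and the relation $\tfrac{\d^2 H}{\d x^2} = \tfrac{1}{x}(F'+F'')$.

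The main obstacle, as the sketch above already signals, is \textbf{bookkeeping of error terms uniformly in $b$}: one must verify that the remainder $w$ and the subleading $\cO(x^{3-n})$ (resp.\ $\cO(x^{1/m})$, and the $x\log x$) contributions are genuinely negligible after dividing by $x = \eps\verti{b}^{-1}$ in the second-derivative formula, and that the resulting comparabilities $H \sim \eps\verti{b}^{-1}$, $\d H/\d x \sim 1$, $\d^2 H/\d x^2 \sim b$ hold with implied constants independent of $b$ in the regime $\verti{b}\gg_\eps1$. The delicate sub-point is that $\d H/\d x = \theta + b + \cO(\eps)$ is comparable to $1$ only because the statement fixes $\eps$ first and then lets $\verti{b}$ be large; I would make explicit that $b + \theta$ and $b$ are comparable once $\verti{b}\gg_\eps1$ (so the sign of $b$ matters, and one reads $\sim$ as two-sided bounds), and that all $\cO(\cdot)$ constants coming from the norms $\vertii{\cdot}$ in Propositions~\ref{prop:nonres} and~\ref{prop:res} depend only on $n$. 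Everything else is a direct substitution into the already-established analytic representations.
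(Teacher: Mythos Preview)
Your overall strategy---trace the solution back through the coordinate changes and evaluate at $x=\eps|b|^{-1}$---matches the paper's. But there is a genuine error in your $\d H/\d x$ computation that propagates to a wrong conclusion.

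You write $\d H/\d x = F + xF' = F + u'$ and then substitute $u'(x) = b + \cO(\eps)$ to get $\d H/\d x = \theta + b + \cO(\eps)$. The problem is a silent switch between conventions: in ``$F + xF'$'' you are using $F' = \d F/\d x$, so $xF' = x\,\d u/\d x = \d u/\d s$; but in ``$u'(x) = b + \cO(\eps)$'' you have computed $\d u/\d x$, not $\d u/\d s$. The correct quantity is $x\,\d u/\d x = u + G \approx bx$, which at $x = \eps|b|^{-1}$ is $\pm\eps + \cO(\eps^2)$, \emph{not} $b$. Hence $\d H/\d x = \theta + u + x\,\d u/\d x = \theta + 2u + G = \theta \pm 2\eps + \cO(\eps^2) \sim \theta \sim 1$. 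Your formula $\theta + b$ would give $\d H/\d x \sim b$, contradicting the statement; you noticed the discrepancy (``treating $b+\theta\sim 1$ only makes sense if\ldots'') but the attempted rescue is not correct---the point is simply that your leading term is off by a factor of $x$.

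A second, smaller issue: you invoke the tangent-space equations~\eqref{es_n_ne_2} to evaluate $F'$ and $F''$. Those equations describe only the \emph{linear} eigenspace $E^\txtu(p_\theta)$, not the nonlinear unstable manifold; the true $F',F''$ are the analytic graphs $\tilde g_1,\tilde g_2$, which agree with~\eqref{es_n_ne_2} only to first order. For the leading asymptotics this still works, but you must then control the quadratic-and-higher corrections (which include terms like $x_3^2 = b^2x^2$) when dividing by $x$ in the second-derivative formula.

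The paper sidesteps both difficulties by differentiating the explicit representation $H = x\bigl(\theta + bx + w(x,x^{3-n},bx)\bigr)$ (resp.\ its resonant analogue) directly in $x$, rewriting $\d/\d x$ via $x\partial_{x_1} + (3-n)x_2\partial_{x_2} + x_3\partial_{x_3}$, and then using only the pointwise boundary conditions $w = \partial_{x_1}w = \partial_{x_3}w = 0$ at the origin. This keeps all derivatives in one coordinate system and makes the $b$-dependence of every error term explicit, yielding $\d H/\d x = \theta + 2bx + \cO(x^2 + x^{3-n} + |b|^2x^2)$ and $\d^2 H/\d x^2 = 2b + \cO(x + x^{2-n} + |b|^2x)$, from which the claim follows by direct substitution.
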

\begin{proof}[Proof of Corollary~\ref{cor:exp_hb} (non-resonant case)]
For $n \in (0,3) \setminus \{3-\frac1m: \, m \in \N\}$, we have due to equations \eqref{trafo_pw_0n3}\&\eqref{u_theta_0}\&\eqref{identify}\&\eqref{identify_2}
\begin{equation}\label{rep_d2h_nonres}
H = x \left(\theta + b x + w\left(x,x^{3-n},bx\right)\right) \quad \mbox{for} \quad 0 \le x \le c \min\left\{\eps,\eps^{\frac{2}{3-n}}, \frac{\eps}{\verti{b}}\right\} =: x_b^*(\eps),
\end{equation}
with a sufficiently small constant $c > 0$, and $\eps > 0$ and the analytic solution $w = w(x_1,x_2,x_3)$ to \eqref{prob_w_nonres} are chosen as in or given by Proposition~\ref{prop:nonres}, respectively. By absorbing into $\eps$, we may assume without loss of generality $c = 1$. Differentiating \eqref{rep_d2h_nonres} with respect to $x$, we obtain
\begin{align*}
\frac{\d H}{\d x} &= \theta + 2 b x + \left(x \frac{\d}{\d x} + 1\right) w = \theta + 2 b x + \left(x_1 \partial_{x_1} + (3-n) x_2 \partial_{x_2} + x_3 \partial_{x_3}+1\right) w,\\
\frac{\d^2 H}{\d x^2} &= 2 b + x^{-1} x \frac{\d}{\d x} \left(x \frac{\d}{\d x} + 1\right) w = 2 b + x^{-1} q\left(x_1 \partial_{x_1} + (3-n) x_2 \partial_{x_2} + x_3 \partial_{x_3}\right) w,
\end{align*}
where $q(\zeta) := \zeta(\zeta+1)$ and $(x_1,x_2,x_3) = \left(x,x^{3-n},bx\right)$. Due to the boundary conditions \eqref{bc_w_nonres}, we obtain
\begin{equation}\label{as_d2h_nonres}
\left.
\begin{aligned}
H &= \theta x + b x^2 + \cO\left(x^3 + x^{4-n} + \verti{b}^2 x^3\right) \\
\frac{\d H}{\d x} &= \theta + 2 b x + \cO\left(x^2 + x^{3-n} + \verti{b}^2 x^2\right) \\
\frac{\d^2 H}{\d x^2} &= 2 b + \cO\left(x + x^{2-n} + \verti{b}^2 x\right)
\end{aligned}
\right\} \quad \mbox{for} \quad 0 \le x \le x_b^*(\eps).
\end{equation}
Evaluating \eqref{as_d2h_nonres} at $x = x_b^*(\eps)$, we get the limiting behavior
\[
\left.
\begin{aligned}
H\left(\eps \verti{b}^{-1}\right) &= \theta \eps \verti{b}^{-1} \left(1 + \cO\left(\eps^2 \verti{b}^{-2} + \eps^{3-n} \verti{b}^{n-3} + \eps\right)\right) \\
\frac{\d H}{\d x}\left(\eps \verti{b}^{-1}\right) &= \theta + \cO\left(\eps^2 \verti{b}^{-2} + \eps^{3-n} \verti{b}^{n-3} + \eps\right) \\
\frac{\d^2 H}{\d x^2}\left(\eps \verti{b}^{-1}\right) &= b \left(2 + \cO\left(\eps \verti{b}^{-2} + \eps^{2-n} \verti{b}^{n-3} + \eps \right)\right)
\end{aligned}
\right\} \quad \mbox{as} \quad b \to \pm \infty.
\]
Hence, for $0 < \eps \ll 1$ and $b \gg_\eps 1$, we arrive at \eqref{as_hdhd2h_theta}.
\end{proof}

\begin{proof}[Proof of Corollary~\ref{cor:exp_hb} (resonant case)]
In the resonant case $n = 3 - \frac 1 m$ with $m \in \N$, we use the sequence of transformations \eqref{trafo_pw_0n3}\&\eqref{u_theta_0}\&\eqref{identify_y}\&\eqref{identify_y_2} and obtain
\begin{equation}\label{rep_d2h_res}
H = x \left(\theta + b x + w\left(\frac 1 m x \log x,x^{\frac 1 m},bx\right)\right) \quad \mbox{for} \quad 0 \le x \le c \min\left\{\tilde c(\eps), \frac{\eps}{\verti{b}}\right\} =: x_b^{**}(\eps),
\end{equation}
where $c > 0$ is sufficiently small, $\tilde c(\eps) > 0$ is an $\eps$-dependent constant, and $\eps > 0$ and the analytic solution $w = w(x_1,x_2,x_3)$ to \eqref{prob_w_nonres} are chosen or constructed in Proposition~\ref{prop:nonres}, respectively. Again, we may assume $c = 1$. Differentiating in $x$ yields
\begin{align*}
\frac{\d H}{\d x} &= \theta + 2 b x + \left(x \frac{\d}{\d x} + 1\right) w \\
&= \theta + 2 b x + \left(m^{-1} \left(\left(m y_1 + y_2^m\right) \partial_{y_1} + y_2 \partial_{y_2} + m y_3 \partial_{y_3}\right) + 1\right) w,\\
\frac{\d^2 H}{\d x^2} &= 2 b + x^{-1} x \frac{\d}{\d x} \left(x \frac{\d}{\d x} + 1\right) w \\
&= 2 b + x^{-1} q\left(m^{-1} \left(\left(m y_1 + y_2^m\right) \partial_{y_1} + y_2 \partial_{y_2} + m y_3 \partial_{y_3}\right) \right) w,
\end{align*}
where $q(\zeta) := \zeta (\zeta+1)$ and $\left(y_1,y_2,y_3\right) = \left(\frac 1 m x \log x, x^{\frac 1m}, b x\right)$. Due to \eqref{bc_w_res}, we may conclude
\begin{equation}\label{as_d2h_res}
\left.
\begin{aligned}
H &= \theta x + b x^2 + \cO\left(x^2 \verti{\log x} + x^{1 + \frac 1 m} + \verti{b}^2 x^3\right) \\
\frac{\d H}{\d x} &= \theta + 2 b x + \cO\left(x \verti{\log x} + x^{\frac 1 m} + \verti{b}^2 x^2\right) \\
\frac{\d^2 H}{\d x^2} &= 2 b + \cO\left(\verti{\log x} + x^{- 1 + \frac 1 m} + \verti{b}^2 x\right)
\end{aligned}
\right\}\quad \mbox{for} \quad 0 \le x \le x_b^{**}(\eps).
\end{equation}
We evaluate \eqref{as_d2h_res} at $x = x_b^{**}(\eps)$ and obtain
\[
\left.
\begin{aligned}
H\left(\eps \verti{b}^{-1}\right) &= \theta \eps \verti{b}^{-1} \left(1 + \cO\left(\eps \verti{b}^{-1} \verti{\log\left(\eps \verti{b}^{-1}\right)} + \eps^{\frac 1 m} \verti{b}^{-\frac 1 m} + \eps\right)\right) \\
\frac{\d H}{\d x}\left(\eps \verti{b}^{-1}\right) &= \theta + \cO\left(\eps \verti{b}^{-1} \verti{\log\left(\eps \verti{b}^{-1}\right)} + \eps^{\frac 1 m} \verti{b}^{-\frac 1 m} + \eps\right) \\
\frac{\d^2 H}{\d x^2}\left(\eps \verti{b}^{-1}\right) &= b \left(2 + \cO\left(\verti{b}^{-1} \verti{\log\left(\eps \verti{b}^{-1}\right)} + \eps^{- 1 + \frac 1 m} \verti{b}^{-\frac 1 m} + \eps\right)\right)
\end{aligned}
\right\} \quad \mbox{as} \quad b \to \pm \infty
\]
and because of $n < 3$, we again arrive at \eqref{as_hdhd2h_theta} for $0 < \eps \ll 1$ and $b \gg_\eps 1$.
\end{proof}

\section{Global solutions}
\label{sec:global}
In this section we study the global aspects of~\eqref{problem_x}. In particular, we are going to prove the existence and uniqueness of solutions, with local analytic series expansions near the contact line described in Section~\ref{sec:local}, by matching the remaining boundary condition \eqref{bcx2} at $x=1$. We emphasize that existence of solutions has been proven in \cite{bpw.1992} for zero contact angles $\theta = 0$, so that in principle only for $\theta > 0$ our arguments are necessary. However, since our method is quite different from the one in \cite{bpw.1992} (where solutions are constructed by shooting from $x = 1$ and matching conditions \eqref{bcx1}), we opt for a self-contained presentation also for $\theta = 0$.

\subsection{Existence (shooting argument) for zero and nonzero dynamic contact angles}
\label{sec:globalexist}

The proof is based upon a shooting argument. First, it is convenient to re-write~\eqref{problem_x} as a first-order autonomous system using $\frac{\txtd x}{\txtd \tau}=1$, $H^\prime := \frac{\txtd H}{\txtd \tau}$, $H^{\prime\prime} := \frac{\txtd^2 H}{\txtd \tau^2}$, so that
\begin{equation}
\label{eq:ODE_f}
\frac{\d}{\d \tau} \begin{pmatrix} x \\ H \\ H^\prime \\ H^{\prime\prime} \end{pmatrix} = \begin{pmatrix} 1 \\ H^\prime \\ H^{\prime\prime} \\ H^{1-n}(x-1) \end{pmatrix}
\end{equation}
Note that $H'$, $H''$ are now notations for phase space variables. To consider the problem from a geometric perspective, one introduces the boundary conditions as boundary manifolds
\be
\label{eq:ODE_BVP}
\begin{array}{rl}
\cB_0^\theta&:=\{\left(x, H, H^\prime, H^{\prime\prime}\right)\in\R^4:x=0,H=0,H^\prime=\theta\},\\
\cB_1&:=\{\left(x, H, H^\prime, H^{\prime\prime}\right)\in\R^4:x=1,H^\prime=0\}.\\
\end{array}
\ee
In particular, solutions to~\eqref{eq:ODE_f}-\eqref{eq:ODE_BVP} correspond to connecting orbits between $\cB_0^\theta$ and $\cB_1$ as shown in Figure~\ref{fig03}.
\begin{figure}[htbp]
\psfrag{B0}{$\cB_0^\theta$}
\psfrag{x}{$x$}
\psfrag{u}{$H'$}
\psfrag{v}{$H''$}
\psfrag{u0}{$\cB_1$}
\psfrag{ut}{$\{x=0\}$}
\psfrag{Bd}{$\{x=\delta\}$}
\psfrag{Bd1}{$\cB_\delta^\theta$}
\psfrag{B1}{$\{x=1\}$}
	\centering
		\includegraphics[width=0.5\textwidth]{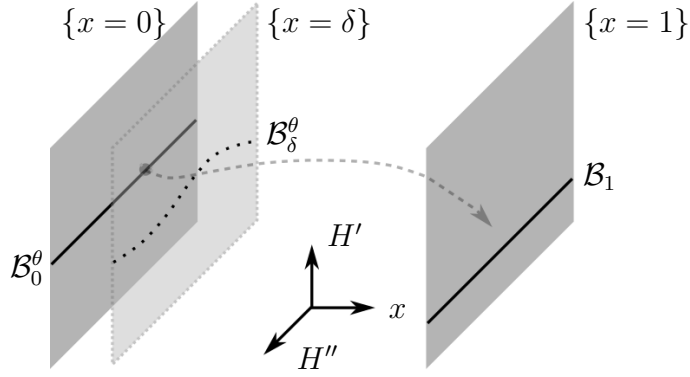}
		\caption{Sketch of the boundary value problem (BVP) geometric formulation. The starting sub-manifold $\cB_0^\theta$ is one-dimensional as it also accounts for $H=0$ while the target manifold $\cB_1$ is two-dimensional. A possible starting point (dot) in $\cB_0^\theta$ of a trajectory (dashed with an arrow at the end) is shown. This trajectory overshoots the condition $H^\prime=0$ at $x=1$.}
		\label{fig03}
\end{figure} 

Let $\phi_{\tau}\left(x, H, H^\prime, H^{\prime\prime}\right)$ denote the flow associated to the vector field~\eqref{eq:ODE_f}.
\begin{lemma}
\label{lem:shift}
There exists $\tau_0>0$ such that $\phi_{\tau}(\cB_0^\theta)$ is a well-defined analytic manifold for all $\tau \in (0,\tau_0]$ with 
$\dim(\{\phi_{\tau}(\cB_0^\theta):\tau\in(0,\tau_0]\})=\dim(\cB_0^\theta)+1$.
\end{lemma}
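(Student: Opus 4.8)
The plan is to exploit the local analytic description of solutions emanating from $\cB_0^\theta$ that was already established in Section~\ref{sec:local}. The one-dimensional manifold $\cB_0^\theta$ is parametrized by $H^{\prime\prime}(0) = 2b$ (equivalently, by the curvature parameter $b$ appearing in \eqref{bc_nonres} and \eqref{bc_res}), while the remaining initial data $x = 0$, $H = 0$, $H^\prime = \theta$ are fixed. The key difficulty is that the vector field in \eqref{eq:ODE_f} has an $H^{1-n}$-factor in the last component, which is singular at $H = 0$, so the standard flow $\phi_\tau$ is \emph{not} a priori well-defined or smooth starting from points of $\cB_0^\theta$ itself. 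This is exactly the obstacle the whole local theory was designed to overcome.

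First I would invoke Propositions~\ref{prop:nonres} and~\ref{prop:res} (together with Corollaries~\ref{cor:exp_0n32} and~\ref{cor:exp_hb}) to conclude that for each admissible value of $b$ there is a solution $H = H_b(x)$ of \eqref{ode_third2}\&\eqref{bcx1} on an interval $[0, x^*_b(\eps)]$ (resp.\ $[0, x^{**}_b(\eps)]$), which is analytic in $x$ on $(0, x^*_b)$ and whose dependence on $b$ is analytic as well, since $b$ enters the fixed-point problems \eqref{prob_w_nonres} and \eqref{prob_w_res} only through the analytic data and the contraction-mapping solution $w$ depends analytically on parameters. The point is that the singular initial-value problem has, for every $b$, a unique trajectory leaving $\cB_0^\theta$, and that trajectory reaches, after a definite amount of "time'' $\tau$, a region where $H > 0$ and the vector field \eqref{eq:ODE_f} is genuinely analytic. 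Concretely, I would fix a small $\delta > 0$ with $\delta \le x^*_b(\eps)$ uniformly for $b$ in a compact set (or, if one wants the statement for a single $b$, just for that $b$), and record the state $\big(\delta, H_b(\delta), H_b^\prime(\delta), H_b^{\prime\prime}(\delta)\big)$; by the local expansions this is an analytic function of $b$ with $H_b(\delta) > 0$.

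Next I would define $\tau_0$ as the elapsed parameter value corresponding to $x = \delta$ along these trajectories — since $\frac{\txtd x}{\txtd \tau} = 1$ we simply have $\tau = x$, so in fact $\tau_0 = \delta$ — and observe that for $\tau \in (0,\tau_0]$ the set $\phi_\tau(\cB_0^\theta)$ is the image under the (now regular, analytic) flow of \eqref{eq:ODE_f} of the analytic curve $b \mapsto \big(\tau, H_b(\tau), H_b^\prime(\tau), H_b^{\prime\prime}(\tau)\big)$, which lies entirely in the open set $\{H > 0\}$ where \eqref{eq:ODE_f} has an analytic right-hand side. Hence $\phi_\tau(\cB_0^\theta)$ is a well-defined analytic manifold. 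For the dimension count, I would note that the map $b \mapsto \big(\tau, H_b(\tau), H_b^\prime(\tau), H_b^{\prime\prime}(\tau)\big)$ is injective: two trajectories with the same value at $x = \tau$ would, by uniqueness of solutions of the analytic ODE \eqref{eq:ODE_f} in $\{H > 0\}$, coincide on $(0,\tau]$ and then, by the uniqueness part of Propositions~\ref{prop:nonres}/\ref{prop:res}, have the same $b$; and its derivative in $b$ is nonvanishing because, e.g., $\partial_b H_b^{\prime\prime}(\tau) \to 2$ as $\tau \searrow 0$ by Corollary~\ref{cor:exp_hb}. Therefore $\phi_\tau(\cB_0^\theta)$ is a one-dimensional analytic submanifold of $\R^4$, i.e.\ $\dim\big(\{\phi_\tau(\cB_0^\theta) : \tau \in (0,\tau_0]\}\big) = \dim(\cB_0^\theta) + 1 = 2$, the extra dimension coming from sweeping out the parameter $\tau$ (equivalently $x$).

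The main obstacle, as indicated, is not any of these routine manifold arguments but the justification that the flow starting from the singular boundary $\cB_0^\theta$ is well-defined and depends nicely on $b$ — and this is precisely what the local analysis of Section~\ref{sec:local} supplies, so the proof amounts to packaging those results. A minor technical point to be careful about is whether one wants $\tau_0$ uniform over all $b$ or merely for trajectories of bounded curvature; since $x^*_b(\eps)$ and $x^{**}_b(\eps)$ scale like $\eps/\verti{b}$ for large $\verti b$, uniformity holds only on compact $b$-ranges, which is all that is needed for the subsequent shooting argument (where $b$ will be taken large but fixed at the relevant stage, cf.\ Corollary~\ref{cor:exp_hb}).
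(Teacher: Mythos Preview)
Your proposal is correct and takes essentially the same approach as the paper: the paper's proof is a one-line citation of the local analytic expansions from Section~\ref{sec:local} (Propositions~\ref{prop:steady_0n32}, \ref{prop:nonres}, \ref{prop:res}), and you have simply unpacked what that citation entails. The only minor omission is that your argument is framed for $\theta > 0$ with parameter $b$, whereas for $\theta = 0$ the manifold $\cB_0^0$ is parametrized by $\kappa$ and one should invoke Proposition~\ref{prop:steady_0n32} instead; otherwise the reasoning is the same.
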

\begin{proof}
The result follows from the local analytic expansions of the solutions to the thin film equation near the contact line developed in Section~\ref{sec:local} (cf.~Proposition~\ref{prop:steady_0n32}, Proposition~\ref{prop:nonres}, and Proposition~\ref{prop:res}) if we select $\tau_0>0$ sufficiently small.
\end{proof}
Since $\frac{\d}{\d \tau} x = 1$, Lemma~\ref{lem:shift} implies that the definition
\be
\cB_\delta^\theta:=\{\phi_{\tau}(\cB_0^\theta):\tau\in[0,\tau_0]\} \cap \{x = \delta\}
\ee
yields a well-defined analytic sub-manifold for any $\delta\in(0,\tau_0]$. It is expected that connecting orbits are generically isolated since they correspond to intersections of $\{\phi_{\tau}(\cB_\delta^\theta):\tau\in[\delta,1]\}$ and $\{\phi_{-\tau}(\cB_1):\tau\in[0,1]\}$ and
\[
\dim(\{\phi_{\tau}(\cB_\delta^\theta):\tau\in[\delta,1]\})=2, \qquad \dim(\{\phi_{-\tau}(\cB_1):\tau\in[0,1]\})=3,
\]
i.e., the intersection of a two- and a three-dimensional manifold in four-dimensional ambient space generically consists of one-dimensional curves. The slight shift of the boundary manifold from $\cB_0^\theta$ to $\cB_\delta^\theta$ avoids the problem that the vector field in~\eqref{eq:ODE_f} may, a priori, not be well-defined for $n\geq 1$ when $H=0$. Furthermore, we can give a more precise description of $\cB_\delta^\theta$ which is the core technical contribution for the proof of existence via a shooting argument. We have $\dim(\cB_\delta^\theta)=1$ and therefore, we may parametrize the solution manifold $\cB_\delta^\theta$ by the parameter $\kappa > 0$ if $\theta = 0$ and by the parameter $b \in \R$ if $\theta > 0$ (cf.~\S \ref{ssec:cw} and \S \ref{ssec:pw2}).
\begin{lemma}
\label{lem:poslittle}
Suppose $\left(x, H, H^\prime, H^{\prime\prime}\right) \in \cB_\delta^\theta$. Then it follows that $H>0$, $H^\prime > 0$, and furthermore:
\begin{itemize}
\item[(a)] if $\theta=0$ and $0 < n < 3/2$, we have $H' \le 0$ at some $x = x^* \le 1$ for $0 < \kappa \ll 1$ (\emph{undershoot}) and $H' > 0$ at $x = 1$ for $\kappa \gg 1$ (\emph{overshoot}).
\item[(b)] if $\theta>0$ then $H' \le 0$ at some $x = x^* \le 1$ for $- b \gg 1$ (\emph{undershoot}) and $H' > 0$ at $x = 1$ for $b \gg 1$ (\emph{overshoot}).
\end{itemize}
\end{lemma}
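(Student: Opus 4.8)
The plan is to establish the four easy sign facts ($H>0$, $H'>0$ on $\cB_\delta^\theta$) directly from the local expansions of Section~\ref{sec:local}, and then to handle the undershoot/overshoot dichotomy in (a) and (b) by tracking how the solution scales with the shooting parameter ($\kappa$ for $\theta=0$, $b$ for $\theta>0$) using Corollaries~\ref{cor:exp_0n32} and~\ref{cor:exp_hb}. First I would note that a point of $\cB_\delta^\theta$ is, by construction, the time-$\delta$ image under $\phi_\tau$ of a point of $\cB_0^\theta$, hence lies on a genuine solution trajectory of~\eqref{eq:ODE_f} emanating from the contact line. For $\theta=0$ and $n\in(0,3/2)$, Corollary~\ref{cor:exp_0n32} gives $H=\kappa x^2(1+o(1))>0$ and $H'=2\kappa x(1+o(1))>0$ for $0<x\ll_\kappa 1$; for $\theta>0$, equations~\eqref{as_hdhd2h_theta} (equivalently~\eqref{as_d2h_nonres}/\eqref{as_d2h_res}) give $H=\theta x(1+o(1))>0$ and $H'=\theta+o(1)>0$. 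In both cases $\delta$ can be taken small enough that these asymptotics are valid at $x=\delta$, which proves the first assertion $H>0$, $H'>0$ on $\cB_\delta^\theta$.

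Next I would address the undershoot side. The key observation is that as long as $H>0$ the ODE~\eqref{ode_third2} reads $H^{n-1}\tfrac{\d^3H}{\d x^3}=x-1<0$ on $(0,1)$, so $\tfrac{\d^3H}{\d x^3}<0$ there, i.e.\ $H''$ is strictly decreasing. For $\theta=0$, Corollary~\ref{cor:exp_0n32} gives $H''(\delta)=2\kappa(1+o(1))$, which is small for $0<\kappa\ll1$; since $H''$ then only decreases, $H'$ is concave, and one integrates $H'(x)=H'(\delta)+\int_\delta^x H''<H'(\delta)+(x-\delta)H''(\delta)$, with $H'(\delta)=2\kappa\delta(1+o(1))$ also $O(\kappa)$. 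Thus for $\kappa$ small both $H'(\delta)$ and $H''(\delta)$ are $O(\kappa)$ while the third derivative is bounded below away from zero on the region where $H$ stays positive and bounded, forcing $H'$ to reach $0$ at some $x^*\le1$ (one must check $H$ does not vanish first, which follows since $H(\delta)=O(\kappa)$ but also $H$ is increasing as long as $H'>0$, so $H$ stays positive up to $x^*$). The same mechanism works for $\theta>0$, $-b\gg1$: by~\eqref{as_hdhd2h_theta}, $H''(\delta)\sim b\ll0$, so $H''<0$ and decreasing on $(0,1)$, hence $H'$ is decreasing and, since $H'(\delta)=\theta+o(1)$ is bounded while $\int_\delta^1 H''$ is very negative (dominated by $b$), $H'$ hits zero before $x=1$; again $H>0$ is maintained up to that point.

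For the overshoot side I would show that the solution stays close, on all of $[\delta,1]$, to the \emph{ODE obtained by dropping the nonlinearity} after rescaling. For $\theta=0$: rescale $H=\kappa\hat H$, $x$ unchanged; then $\hat H$ solves $\kappa^{n-1}\hat H^{n-1}\tfrac{\d^3\hat H}{\d x^3}=x-1$ with $\hat H(\delta)=\delta^2(1+o(1))$, $\hat H'(\delta)=2\delta(1+o(1))$, $\hat H''(\delta)=2(1+o(1))$. As $\kappa\to\infty$ the right-hand side $\kappa^{1-n}(x-1)\hat H^{1-n}\to0$ uniformly (as long as $\hat H$ stays bounded away from $0$, which an a priori bound on a fixed interval provides), so $\hat H$ converges to the solution of $\tfrac{\d^3\hat H}{\d x^3}=0$ with those Cauchy data at $x=\delta$, namely $\hat H(x)=x^2+o(1)$, for which $\hat H'(1)=2>0$; hence $H'(1)=\kappa\hat H'(1)>0$ for $\kappa\gg1$. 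For $\theta>0$, $b\gg1$: here the natural rescaling is the one underlying Corollary~\ref{cor:exp_hb}, $x=\verti{b}^{-1}\xi$, $H=\verti{b}^{-1}\hat H$; then $H^{n-1}\tfrac{\d^3 H}{\d x^3}=x-1$ becomes, to leading order in $1/\verti b$, the linear problem $\hat H''{}'= -\,\hat H^{1-n}$-type with the Cauchy data $\hat H(\eps)\sim\eps$, $\hat H'(\eps)\sim\theta$, $\hat H''(\eps)\sim b/\verti b=\mathrm{sgn}\,b=+1$ supplied by~\eqref{as_hdhd2h_theta}, and since $H''$ is decreasing but only by a bounded amount over the rescaled interval, $H'$ stays bounded below by a positive constant; transferring back, $H'(1)>0$. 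Thus the overshoot claims follow.

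The main obstacle I anticipate is making the ``stays positive and bounded'' a priori control rigorous in the overshoot regime so that the nonlinear term $H^{1-n}$ can be treated as a harmless perturbation: one needs a continuation/bootstrap argument showing that on the maximal subinterval of $[\delta,1]$ where $H$ stays, say, in $[\tfrac12 c_0,2C_0]$ (after rescaling), the rescaled solution is within $o(1)$ of the limiting linear solution, and that this interval is in fact all of $[\delta,1]$ for the parameter large enough. For $n\ge1$ the singularity of $H^{1-n}$ as $H\downarrow0$ is the delicate point, but it is only felt near the contact line, where it has already been resolved by the expansions of Section~\ref{sec:local}; away from $x=\delta$ one has $H\ge H(\delta)>0$ and $H$ increasing as long as $H'>0$, so the coefficient is bounded. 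Packaging this into a clean Gronwall-type estimate uniform in the large parameter is the step that requires the most care, but it is routine once the monotonicity facts ($H''$ decreasing, $H$ increasing while $H'>0$) are in place.
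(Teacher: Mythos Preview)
Your broad strategy---feed the local expansions of Corollaries~\ref{cor:exp_0n32} and~\ref{cor:exp_hb} into Taylor-type integral representations of $H'$ and $H''$, and exploit that $H'''=(x-1)H^{1-n}<0$ makes $H''$ strictly decreasing---is exactly the paper's. The positivity of $H$ and $H'$ on $\cB_\delta^\theta$, and the undershoot for $\theta>0$ with $-b\gg1$, are handled just as in the paper. But two of your four remaining cases contain a real gap, and the common cause is that the monotonicity of $H\mapsto H^{1-n}$ flips sign at $n=1$, which forces a case distinction you have not made.

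\textbf{Undershoot for $\theta=0$, $0<n<1$.} Your claim that ``the third derivative is bounded below away from zero on the region where $H$ stays positive and bounded'' is false here. For $\kappa\ll1$ one has $H\le\kappa x^2\le\kappa$ on $[0,1]$ (this follows from $H'''<0$), and since $1-n>0$ this gives only an \emph{upper} bound $H^{1-n}\le\kappa^{1-n}$, not a lower bound. The best lower bound available is $H\ge H(\delta)\sim\kappa\delta^2$ on the interval where $H'>0$, yielding $H^{1-n}\ge(\kappa\delta^2)^{1-n}$, which is small. The paper makes this work by carefully choosing $\delta$ as a suitable power of $\kappa$ and verifying that the resulting negative cubic contribution $-c\,\kappa^{1-n}\delta^{2-2n}(x-\delta)^2$ to $H'$ still dominates the positive linear part $2\kappa(x-\delta)$ at $x=1$; this is a genuine balance of exponents (one needs $\kappa^{n/(2-2n)}\ll\delta\ll\kappa^{n/(3-2n)}$), not a uniform bound. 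For $1\le n<3/2$ your sketch is fine: $H\le\kappa$ now gives $H^{1-n}\ge\kappa^{1-n}\ge1$, which is the paper's first sub-case.

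\textbf{Overshoot for $\theta>0$.} Your rescaling $x=\verti{b}^{-1}\xi$, $H=\verti{b}^{-1}\hat H$ sends the interval $[\eps\verti b^{-1},1]$ to $[\eps,\verti b]$, whose length is unbounded, so the statement ``$H''$ decreases only by a bounded amount over the rescaled interval'' cannot be read off from $\hat H'''\to0$ alone. The paper again splits: for $1\le n<3$ it uses $H\ge\theta x$ (valid while $H''>0$) to bound $H^{1-n}\le(\theta x)^{1-n}$ and shows directly that $\int_{\eps/b}^1(1-x_1)H(x_1)^{1-n}\d x_1=o(b)$, so $H''$ stays positive on all of $[0,1]$ and hence $H'>\theta$; for $0<n<1$ it uses the Taylor upper bound $H\le H(\delta)+H'(\delta)(x-\delta)+\frac12H''(\delta)(x-\delta)^2$ to control $H^{1-n}$ from above and estimates the double integral by a change of variables. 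Your rescaling could in principle be pushed through with this same bootstrap, but it is not ``routine'' without the case split.

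\textbf{Overshoot for $\theta=0$.} Your argument is correct but heavier than needed: since the remainder in~\eqref{hdhd2h_0n32_1} is $\cO(\kappa^{-n})$ uniformly on $[0,1]$, the expansion is valid at $x=1$ once $\kappa\gg1$, giving $H'(1)=2\kappa(1+o(1))>0$ directly. No rescaling or continuation argument is required.
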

\begin{proof}[Proof of Lemma~\ref{lem:poslittle}, Part~(a)]
We first focus on the zero contact angle case $\theta = 0$. From \eqref{hdhd2h_0n32_1} (cf.~Cor.~\ref{cor:exp_0n32}) we necessarily have $H' > 0$ at $x = 1$ for $\kappa \gg 1$, that is, we have an overshoot for $\kappa \gg 1$. For proving the undershoot, we first assume $1 \le n \le 3/2$ and notice that
\begin{equation}\label{dh_int}
H' = H'(0) + H''(0) x + \int_0^x \int_0^{x_1} H'''(x_2) \, \d x_2 \, \d x_1 \stackrel{\eqref{eq:ODE_f}}{=} 2 \kappa x - \int_0^x \int_0^{x_1} (1-x_2) \left(H(x_1)\right)^{1-n} \, \d x_2 \, \d x_1.
\end{equation}
In order to estimate the double integral in \eqref{dh_int}, we observe
\begin{eqnarray*}
H &=& \kappa x^2 + \int_0^x \int_0^{x_1} \int_0^{x_2} H'''(x_3) \, \d x_3 \, \d x_2 \, \d x_1 \\
&\stackrel{\eqref{eq:ODE_f}}{=}& \kappa x^2 - \int_0^x \int_0^{x_1} \int_0^{x_2} (1-x_3) \left(H(x_3)\right)^{1-n} \, \d x_3 \, \d x_2 \, \d x_1 \le \kappa x^2
\end{eqnarray*}
for $0 \le x \le 1$. Inserted into \eqref{dh_int}, we obtain
\begin{eqnarray*}
H'(1) &\le& 2 \kappa - \kappa^{1-n} \int_0^1 \int_0^{x_1} \left(x_2^{2-2n} - x_2^{3-2n}\right) \, \d x_2 \, \d x_1 \\
&\le& 2 \kappa - \kappa^{1-n} \left(\left((4-2n) (3-2n)\right)^{-1} - \left((5-2n) (4-2n)\right)^{-1}\right) \\
&=& 2 \kappa \left(1 - \kappa^{-n} \left((5-2n) (4-2n) (3-2n)\right)^{-1}\right),
\end{eqnarray*}
proving the undershoot for $0 < \kappa \ll 1$ in this case.

\medskip

For $0 < n < 1$ we set $x_\kappa^* := \min \{x > 0: \, H'(x) = 0\} \cup \{1\}$ and argue similarly as in \eqref{dh_int} to obtain for $\delta \le x \le x^*_\kappa$
\begin{eqnarray}\nonumber
H' &\stackrel{\eqref{eq:ODE_f}}{=}& H'(\delta) + H''(\delta) (x-\delta) - \int_\delta^x \int_\delta^{x_1} (1-x_2) \left(H(x_2)\right)^{1-n} \, \d x_2 \, \d x_1 \\
&\le& H'(\delta) + H''(\delta) (x-\delta) - \left(H(\delta)\right)^{1-n} \int_\delta^x \int_\delta^{x_1} (1-x_2) \, \d x_2 \, \d x_1 \nonumber \\
&\le& H'(\delta) + H''(\delta) (x-\delta) - \frac 1 6 \left(H(\delta)\right)^{1-n} (x - \delta)^2 \left(3 - x + \delta\right). \label{dh_int_2}
\end{eqnarray}
Using \eqref{hdhd2h_0n32} (cf.~Cor.~\ref{cor:exp_0n32}), provided that $\delta \le x \le x^*_\kappa$, estimate~\eqref{dh_int_2} upgrades to
\begin{eqnarray*}
H' &\le& 2 \kappa \delta \left(1 + \cO\left(\kappa^{-n} \delta^{3-2n} + \kappa^{-n} \delta^{4-2n}\right)\right) + 2 \kappa \left(1 + \cO\left(\kappa^{-n} \delta^{3-2n} + \kappa^{-n} \delta^{4-2n}\right)\right) (x - \delta) \\
&& - \frac 1 6 \kappa^{1-n} \delta^{2-2n} \left(1 + \cO\left(\kappa^{-n} \delta^{3-2n} + \kappa^{-n} \delta^{4-2n}\right)\right) (x-\delta)^2 \left(3 - x + \delta\right).
\end{eqnarray*}
Taking $0 < \delta \ll \kappa^{n/(3-n)}$, we may conclude that there exist constants $c_1, c_2, c_3 > 0$ with
\begin{equation}\label{taylor_nl1}
H' \le c_1 \kappa^{1 + \frac{n}{3-2n}} + c_2 \kappa (x-\delta) - c_3 \kappa^{1-n + \frac{n (2-2n)}{3-2n}} (x-\delta)^2 \left(3 - x + \delta\right) \quad \mbox{for} \quad \delta \le x \le x_\kappa^*.
\end{equation}
Since the exponents of $\kappa$ in \eqref{taylor_nl1} fulfill
\[
1-n + \frac{n (2-2n)}{3-2n} < 1 < 1 + \frac{n}{3-2n},
\]
estimate~\eqref{taylor_nl1} yields the desired undershoot for $0 < \kappa \ll 1$.
\end{proof}
\begin{proof}[Proof of Lemma~\ref{lem:poslittle}, Part~(b)]
As in \eqref{dh_int_2}, we have for $0 < \delta \le x \le 1$
\[
H' \stackrel{\eqref{eq:ODE_f}}{=} H'(\delta) + H''(\delta) (x-\delta) - \int_0^x \int_0^{x_1} (1-x_2) \left(H(x_2)\right)^{1-n} \d x_2 \, \d x_1 \le H'(\delta) + H''(\delta) (x-\delta),
\]
since the double integral always yields a non-positive contribution. Using estimates~\eqref{as_hdhd2h_theta} of Corollary~\ref{cor:exp_hb}, we conclude that there exist constants $c_1, c_2 > 0$ with
\begin{equation}\label{under_tg0}
H' \le c_1 + c_2 b \left(x- \eps \verti{b}^{-1}\right) \quad \mbox{provided} \quad \eps \verti{b}^{-1} \le x \le 1, \quad 0 < \eps \ll 1, \quad \verti{b} \gg_\eps 1.
\end{equation}
Taking $b \to - \infty$, \eqref{under_tg0} yields the desired undershoot.

\medskip

Proving the overshoot is more cumbersome and we again distinguish between two cases:

\medskip

We start with the case $0 < n < 1$, restrict our considerations to $b > 0$, and estimate (using Corollary~\ref{cor:exp_hb})
\begin{eqnarray}\nonumber
H' &\stackrel{\eqref{eq:ODE_f}}{=}&  H'(\delta) + H''(\delta) \left(x - \delta\right) - \int_\delta^x \int_\delta^{x_1} (1-x_2) \left(H(x_2)\right)^{1-n} \, \d x_2 \, \d x_1 \\
&\stackrel{\eqref{as_hdhd2h_theta}}{\ge}& c_1 + c_2 b \left(x - \eps b^{-1}\right) - \int_{\eps b^{-1}}^x \int_{\eps b^{-1}}^{x_1} (1-x_2) \left(H(x_2)\right)^{1-n} \, \d x_2 \, \d x_1, \label{over_tg0_2}
\end{eqnarray}
with constants $c_1, c_2 > 0$ and where we assumed $x \ge \delta := \eps b^{-1}$, $0 < \eps \ll 1$, $b \gg_\eps 1$. In order to estimate the double integral in \eqref{over_tg0_2}, observe that for $\delta \le x \le 1$
\begin{eqnarray*}
H &\stackrel{\eqref{eq:ODE_f}}{=}& H(\delta) + H'(\delta) (x-\delta) + \frac 1 2 H''(\delta) (x-\delta)^2 \\
&&- \int_\delta^x \int_\delta^{x_1} \int_\delta^{x_2} (1-x_3) \left(H(x_3)\right)^{1-n} \, \d x_3 \, \d x_2 \, \d x_1\\
&\le& H(\delta) + H'(\delta) (x-\delta) + \frac 1 2 H''(\delta) (x-\delta)^2.
\end{eqnarray*}
Once more appealing to \eqref{as_hdhd2h_theta} (cf.~Cor.~\ref{cor:exp_hb}), we conclude that there exist constants $c_3, c_4, c_5 > 0$ with
\begin{equation}\label{over_tg0_1}
H \le c_3 \eps b^{-1} + c_4 \left(x- \eps b^{-1}\right) + c_5 b \left(x- \eps b^{-1}\right)^2 \quad \mbox{for} \quad \eps b^{-1} \le x \le 1,
\end{equation}
provided $0 < \eps \ll 1$ and $b \gg_\eps 1$. Hence, using \eqref{over_tg0_1} the double integral in \eqref{over_tg0_2} can be estimated as follows:
\begin{eqnarray}\nonumber
\lefteqn{- \int_\delta^x \int_\delta^{x_1} (1-x_2) \left(H(x_2)\right)^{1-n} \, \d x_2 \, \d x_1} \\
&\ge& - \int_{\eps b^{-1}}^x \int_{\eps b^{-1}}^{x_1} (1-x_2) \left(c_3 \eps b^{-1} + c_4 \left(x_2- \eps b^{-1}\right) + c_5 b \left(x_2- \eps b^{-1}\right)^2\right)^{1-n} \, \d x_2 \, \d x_1 \nonumber \\
&\ge& - \eps^{3-n} b^{n-3} \int_1^{b \eps^{-1} x} \int_1^{y_1}  \left(c_3 + c_4 \left(y_2- 1\right) + c_5 \eps \left(y_2- 1\right)^2\right)^{1-n} \, \d y_2 \, \d y_1 \nonumber \\
&=& \cO\left(b^{1-n} x^{4-2n}\right), \label{over_tg0_3}
\end{eqnarray}
where the dependence on the constant $\eps$ has been discarded in the last step. The combination of \eqref{over_tg0_2} and \eqref{over_tg0_3} shows
\[
H' \ge c_1 + c_2 b \left(x - \eps b^{-1}\right) + \cO\left(b^{1-n} x^{4-2n}\right), \quad \mbox{where} \quad \eps b^{-1} \le x \le 1, \quad 0 < \eps \ll 1, \quad b \gg_\eps 1,
\]
proving the overshoot as $b \to \infty$.

\medskip

Finally, for $1 \le n < 3$ we notice that for $0 < \eps \ll 1$ and $b \gg_\eps 1$ we have $H''\left(\eps b^{-1}\right) \sim b > 0$ by \eqref{as_hdhd2h_theta} (cf.~Cor.~\ref{cor:exp_hb}). Hence, $H'' > 0$ for $0 < x \ll 1$ in this case. Define $x^{**} := \min \{x > 0: \, H''(x) = 0\} \cup \{1\}$. Due to the boundary conditions \eqref{bcx1} necessarily $H \ge \theta x$ for $0 \le x \le x^{**}$. This allows to estimate for $\delta := \eps b^{-1} < x \le x^{**}$
\begin{eqnarray*}
H'' &\stackrel{\eqref{eq:ODE_f}}{=}& H''(\delta) - \int_\delta^x (1-x_1) \left(H(x_1)\right)^{1-n} \, \d x_1 \\
&\stackrel{\eqref{as_hdhd2h_theta}}{\ge}& c \, b - \theta^{1-n} \int_{\eps b^{-1}}^x (1-x_1) x_1^{1-n} \, \d x_1 \\
&=& c \, b - \theta^{1-n} \left(\frac{x^{2-n} - \eps^{2-n} b^{n-2}}{2-n} - \frac{x^{3-n} - \eps^{3-n} b^{n-3}}{3-n}\right), \label{over_tg0_4}
\end{eqnarray*}
with a constant $c > 0$, $0 < \eps \ll 1$ and $b \gg_\eps 1$. Estimate~\eqref{over_tg0_4} implies  an overshoot of $H''$ at $x = 1$ as $b \to \infty$ and as $H' = \theta > 0$ at $x = 0$, also an overshoot of $H'$ at $x = 1$ as $b \to \infty$.
\end{proof}

\begin{proposition}
\label{prop:exist}
For any $\delta>0$ sufficiently small, there exists a connecting orbit between $\cB_\delta^\theta$ and $\cB_1$. The orbit is unique if $H> 0$ for all $x\in(0,1]$.
\end{proposition}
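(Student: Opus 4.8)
The plan is to run a shooting argument in the single parameter that coordinatizes $\cB_\delta^\theta$ — namely $\kappa>0$ if $\theta=0$ and $b\in\R$ if $\theta>0$, which I uniformly denote by $\mu$ — and then to upgrade the resulting one-parameter family of trajectories to a \emph{monotone} shooting function, so that existence follows from an intermediate-value argument and uniqueness from strict monotonicity.

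\emph{Existence.} Let $H_\mu$ be the solution of~\eqref{eq:ODE_f} issued from the point of $\cB_\delta^\theta$ with parameter $\mu$; by Lemma~\ref{lem:shift} and continuous dependence on initial data — valid as long as the trajectory stays in $\{H>0\}$, where the vector field in~\eqref{eq:ODE_f} is analytic — the map $\mu\mapsto H_\mu$ is continuous on the set of parameters for which the solution survives to $x=1$. Introduce
\[
\mathcal A:=\{\mu:\ H_\mu>0\ \text{and}\ H_\mu'>0\ \text{on}\ [\delta,1]\},\qquad
\mathcal B:=\{\mu:\ \exists\,x^*\in[\delta,1]\ \text{with}\ H_\mu>0\ \text{on}\ [\delta,x^*]\ \text{and}\ H_\mu'(x^*)<0\}.
\]
Both sets are open, disjoint, and by Lemma~\ref{lem:poslittle} nonempty: $\mathcal A$ contains all large $\mu$ (overshoot), $\mathcal B$ contains all small $\kappa$ resp.\ all $b\ll-1$ (undershoot). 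Since the parameter space ($(0,\infty)$ or $\R$) is connected, there is a value $\mu_c$ lying in neither set, and I claim $H_{\mu_c}$ is the desired connecting orbit. On the maximal subinterval of $[\delta,1]$ on which $H_{\mu_c}>0$, the relation $H'''=H^{1-n}(x-1)<0$ makes $H_{\mu_c}'$ strictly concave and $H_{\mu_c}''$ strictly decreasing, while $\mu_c\notin\mathcal B$ forces $H_{\mu_c}'\ge0$ there; hence $H_{\mu_c}$ is nondecreasing and stays $\ge H_{\mu_c}(\delta)>0$, $H''$ stays bounded, and the solution exists on all of $[\delta,1]$ with $H_{\mu_c}>0$. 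Since $\mu_c\notin\mathcal A$, the concave function $H_{\mu_c}'$ must vanish somewhere in $[\delta,1]$; an interior zero $x_1<1$ would satisfy $H_{\mu_c}''(x_1)<0$ (otherwise $H_{\mu_c}'$ would be increasing on $[\delta,x_1]$ and exceed $H_{\mu_c}'(\delta)>0$), so $H_{\mu_c}'<0$ just to the right of $x_1$, contradicting $\mu_c\notin\mathcal B$. Hence the only zero is at $x=1$, i.e.\ $H_{\mu_c}'(1)=0$, giving a connecting orbit; the local expansions of Section~\ref{sec:local} (with $H=x^2F$ resp.\ $H=xF$, $F(0^+)>0$) finally yield $H_{\mu_c}>0$ on $(0,\delta)$ as well.

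\emph{Uniqueness (assuming $H>0$ on $(0,1]$).} Any solution with $H>0$ on $(0,1]$ has near $x=0$ the expansion from Section~\ref{sec:local}, hence equals $H_\mu$ for the corresponding $\mu$, and — being a connecting orbit — satisfies $H_\mu>0$ and (by the concavity argument above) $H_\mu'>0$ on $(0,1)$ with $H_\mu'(1)=0$. It thus suffices to show $\mu\mapsto H_\mu'(1)$ is strictly monotone on the set of admissible parameters. Given $\mu_1<\mu_2$, set $w:=H_{\mu_2}-H_{\mu_1}$; from the expansions $w(0)=w'(0)=0$ and $w''(0^+)>0$ (only the $x^2$-coefficient, resp.\ $2b$, differs to leading order), and $w'''=(x-1)\bigl(H_{\mu_2}^{\,1-n}-H_{\mu_1}^{\,1-n}\bigr)$. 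Bootstrapping on $\{w>0\}$: there $H_{\mu_2}>H_{\mu_1}>0$, and for $n\ge1$ the monotonicity of $t\mapsto t^{1-n}$ together with $x-1<0$ gives $w'''\ge0$, so $w''\ge w''(0^+)>0$, $w'>0$, $w>0$ all propagate to $x=1$, whence $H_{\mu_2}'(1)>H_{\mu_1}'(1)$ and both cannot vanish. For $n\in(0,1)$ the sign of $w'''$ is reversed and one instead passes to $\psi:=(\d H/\d x)^2$ viewed as a function of $H$ — licit since $H$ is strictly increasing on $(0,1)$ along a connecting orbit — which solves $\tfrac{\d^2\psi}{\d H^2}=2H^{1-n}(x-1)\,\psi^{-1/2}$ and is therefore concave in $H$; the data $\psi(0)\in\{0,\theta^2\}$, $\psi'(0)=4\mu$, together with the endpoint constraint $\int_0^{H(1)}\psi^{-1/2}\,\d H=1$, then pin down $\mu$ by a comparison argument for this scalar concave problem. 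Either way the zero $\mu_c$ of $\mu\mapsto H_\mu'(1)$ is unique.

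\emph{Main obstacle.} The delicate point is the uniqueness part in the range $n\in(0,1)$: there the nonlinearity $t\mapsto t^{1-n}$ has the ``wrong'' monotonicity for the naive comparison of $w=H_{\mu_2}-H_{\mu_1}$, and the vector field degenerates as $H\to0$, so that controlling the sign of the difference (equivalently, the monotonicity of the shooting function) requires the passage to the $\psi$-formulation and exploiting that $\psi$ is concave as a function of $H$. A secondary nuisance, already in the existence step, is to certify that at the critical parameter $\mu_c$ the solution does not cease to exist — through $H\to0$ or blow-up of $H''$ — before reaching $x=1$; this is exactly what the monotonicity of $H$ guaranteed by $\mu_c\notin\mathcal B$ provides.
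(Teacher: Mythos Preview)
Your existence argument — overshoot/undershoot via Lemma~\ref{lem:poslittle}, the topological partition into $\mathcal A$ and $\mathcal B$, and the use of concavity of $H'$ (from $H'''=H^{1-n}(x-1)<0$) to rule out interior zeros at the critical parameter — is exactly the paper's shooting argument, phrased slightly more cleanly. This part is fine.

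For uniqueness with $n\ge 1$, your direct comparison of $w=H_{\mu_2}-H_{\mu_1}$ is a valid alternative to the paper's route. The paper's own proof of Proposition~\ref{prop:exist} gives only a one-line justification and defers the real work to Section~\ref{ssec:global_unique}, where for $\theta>0$, $n\in(1,3)$ it uses the auxiliary function $\Phi=(H_1-H_2)(H_1-H_2)''-\tfrac12((H_1-H_2)')^2$ and its monotonicity. Your bootstrapping from $w''(0^+)>0$ reaches the same conclusion more directly. One point you should make explicit: for $n>2$ each individual $H_{\mu_i}''$ diverges like $x^{2-n}$ at the origin, but that divergence comes from the $b$-independent part of the expansion (the $x_2$-variable in Section~\ref{ssec:pw2}), so it cancels in the difference and $w''(0^+)=2(\mu_2-\mu_1)$ is genuinely finite. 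With that said, your $n\ge1$ argument is correct.

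The gap is your treatment of $n\in(0,1)$. Passing to $\psi=(H')^2$ as a function of $H$ gives an equation $\psi''=2H^{1-n}(x(H)-1)\psi^{-1/2}$ that still depends on $x(H)=\int_0^H\psi^{-1/2}\,\d H'$, so the two candidate solutions satisfy \emph{different} ODEs — you are not comparing two solutions of one concave scalar problem, and the phrase ``a comparison argument'' hides exactly the coupling that makes this case hard. The endpoint constraint $\int_0^{H(1)}\psi^{-1/2}\,\d H=1$ also has a floating upper limit that depends on $\mu$. The paper avoids this entirely for $\theta>0$ by rescaling so that the shooting starts from the \emph{regular} endpoint $x=1$: in the new variable $\tilde x=a(1-x)$ one has $\tilde H'''=\tilde x\,\tilde H^{1-n}>0$, so $\tilde H'$ is strictly convex and an ordering argument between two putative solutions is clean (see the second proposition of Section~\ref{ssec:global_unique}). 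For $\theta=0$, $n\in(0,1)$ the paper falls back on~\cite{bpw.1992}. You need either to supply a genuine argument for the $\psi$-comparison that controls the $x(H)$-dependence, or adopt the paper's rescaling trick.
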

\begin{proof}
The strategy is based upon a shooting argument. Fix some $\delta>0$ sufficiently small so that Lemma~\ref{lem:shift} and Lemma~\ref{lem:poslittle} hold. Consider an initial condition
\[
\left(\delta, H(\delta), H^\prime(\delta), H^{\prime\prime}(\delta)\right) \in \cB_\delta^\theta
\]
for an orbit $\gamma=\gamma(\tau)$. We start with the case $\theta=0$. Observe that we must require $H^{\prime\prime}(\delta)>0$ due to the local analytic expansion of the solution near $x=0$. However, $H^{\prime\prime}(\delta)$ can be taken as shooting parameter. As long as $H>0$ for $x > 0$ we have that $H^{\prime\prime}$ is monotonically decreasing due to the last line of \eqref{eq:ODE_f}. Since $H^\prime(\delta), H^{\prime\prime}(\delta) > 0$ any $\gamma$ starting in $\cB_\delta^\theta$ has $H$ and $H^\prime$ increasing initially due to the second and third line of \eqref{eq:ODE_f}. If $H^{\prime\prime}(\delta)>0$ is chosen sufficiently large, we can guarantee that the monotonically increasing results for $\gamma$ hold up to the time $\tau=1$ when it reaches $\{x=1\}$. In particular, we have an overshoot since the $H^\prime$-component of $\gamma(1)$ is positive; for the detailed estimates of this overshoot we refer to Lemma~\ref{lem:poslittle}(a) and its proof.  

Next, we observe that $H^\prime$ can only change sign from positive to negative after $H^{\prime\prime}$ has changed sign due to the last two lines of \eqref{eq:ODE_f}. Hence, one should select $H^{\prime\prime}(\delta)>0$ sufficiently small. By Lemma~\ref{lem:poslittle}(a) it follows that there exists a time $\tau_m\in(\delta,1)$ such that the $H^\prime$-component of $\gamma$ changes sign. This guarantees an undershoot of the terminal boundary condition $H^\prime = 0$ at $x = 1$. Notice that the undershoot is easy if $\kappa>0$ could be chosen freely as a small parameter but we have to guarantee that the local analytic series remains valid near the contact line, which is precisely why Lemma~\ref{lem:poslittle}(a)
is needed. 

Hence, if we vary the curvature $H^{\prime\prime}(\delta)>0$ from its initial large value, continuous dependence on initial conditions and the assumption $H>0$ for $x\in(0,1]$ implies that there exists a unique choice of $H^{\prime\prime}(\delta)$ such that $\gamma$ is a connecting orbit. The existence proof for $\theta>0$ carries over verbatim with the modification that we are not restricted to $H^{\prime\prime}(\delta)>0$ on $\cB_\delta^\theta$ and we have to apply Lemma~\ref{lem:poslittle}(b).
\end{proof}

\begin{proposition}
\label{prop:exist1}
For any $\delta>0$ sufficiently small, there exists a connecting orbit between $\cB_0^\theta$ and $\cB_1$. If $\theta=0$ and $H> 0$ for all $x\in(0,1]$, the orbit is unique and monotone, i.e., $H'>0$ for $x\in(0,1)$.
\end{proposition}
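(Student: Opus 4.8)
The plan is to glue the local analytic solution near the contact line (Section~\ref{sec:local}) onto the global connecting orbit between $\cB_\delta^\theta$ and $\cB_1$ supplied by Proposition~\ref{prop:exist}, and then to extract uniqueness and monotonicity for $\theta=0$ from the structure of~\eqref{eq:ODE_f}. For \emph{existence}, fix $\delta>0$ small enough that Lemma~\ref{lem:shift}, Proposition~\ref{prop:exist} and the validity of the local expansions of Section~\ref{sec:local} on $[0,\delta]$ all apply, and let $\gamma$ be a connecting orbit between $\cB_\delta^\theta$ and $\cB_1$. Since $\cB_\delta^\theta=\{\phi_\tau(\cB_0^\theta):\tau\in[0,\tau_0]\}\cap\{x=\delta\}$, the point $\gamma(\delta)$ lies on the analytic surface $\{\phi_\tau(\cB_0^\theta):\tau\in[0,\tau_0]\}$ of Lemma~\ref{lem:shift}, at flow-time $\tau=\delta$ along the trajectory issuing from some $p\in\cB_0^\theta$. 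Continuing this trajectory back to $\tau=0$ — which is to be read off from the local analytic expansion of Proposition~\ref{prop:steady_0n32} (if $\theta=0$, $n\in(0,3/2)$) or of Proposition~\ref{prop:nonres}, Proposition~\ref{prop:res} (if $\theta>0$), each of which extends continuously down to $x=0$ with $(H,H')|_{x=0}=(0,\theta)$, rather than from the flow of~\eqref{eq:ODE_f}, which is singular on $\{H=0\}$ when $n\ge1$ — produces a curve on $[0,\delta]$ from $p$ to $\gamma(\delta)$, whose concatenation with $\gamma|_{[\delta,1]}$ is a connecting orbit between $\cB_0^\theta$ and $\cB_1$; by construction it satisfies $H>0$ on $(0,1]$.

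For \emph{uniqueness} when $\theta=0$, let $H_1,H_2$ be connecting orbits between $\cB_0^0$ and $\cB_1$ with $H_i>0$ on $(0,1]$. Near $x=0$ each $H_i$ necessarily follows the local analytic solution on the unstable manifold (Lemma~\ref{lem:conv_0n32}, Proposition~\ref{prop:steady_0n32}), so $H_i(\delta)\in\cB_\delta^\theta$ and hence $H_i|_{[\delta,1]}$ is a connecting orbit between $\cB_\delta^\theta$ and $\cB_1$. By the uniqueness clause of Proposition~\ref{prop:exist}, $H_1\equiv H_2$ on $[\delta,1]$, and in particular $H_1(\delta)=H_2(\delta)$; since both solve~\eqref{eq:ODE_f} on $(0,\delta]$ within the region $\{H>0\}$, where the vector field is analytic, backward uniqueness for the ODE forces $H_1\equiv H_2$ on $(0,\delta]$ as well, hence on all of $[0,1]$.

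For \emph{monotonicity} when $\theta=0$, Corollary~\ref{cor:exp_0n32} gives $H(0)=H'(0)=0$, $H'(x)=2\kappa x(1+o(1))>0$ for $0<x\ll1$, and $H''(0)=2\kappa>0$, while $H'(1)=0$ because $\gamma$ ends on $\cB_1$. The last line of~\eqref{eq:ODE_f} gives $H'''=H^{1-n}(x-1)<0$ on $(0,1)$ (using $H>0$ there), so $H''$ is strictly decreasing on $[0,1]$. If $H''$ stayed positive throughout $(0,1)$, then $H'$ would be strictly increasing from $H'(0)=0$, forcing $H'(1)>0$ — a contradiction; hence there is a unique $x_0\in(0,1)$ with $H''>0$ on $[0,x_0)$ and $H''<0$ on $(x_0,1]$. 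Therefore $H'$ is strictly increasing on $[0,x_0]$ and strictly decreasing on $[x_0,1]$, and since $H'(0)=H'(1)=0$ this yields $H'(x_0)>0$ and $H'>0$ on $(0,1)$.

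The main obstacle is the bookkeeping in the existence step: one must choose $\delta>0$ small enough, uniformly over the relevant range of the parameter ($\kappa$ if $\theta=0$, $b$ if $\theta>0$), so that the local analytic expansion of Section~\ref{sec:local} is valid on all of $[0,\delta]$ and is compatible with the flow-based definition of $\cB_\delta^\theta$ — precisely the uniform statement that Lemma~\ref{lem:shift} is designed to provide. Granting this, the remainder is a routine concatenation, a restriction plus backward-uniqueness argument, and a short sign analysis of $H''$.
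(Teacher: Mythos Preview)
Your proof is correct and follows essentially the same strategy as the paper: glue the local analytic expansion on $[0,\delta]$ to the connecting orbit of Proposition~\ref{prop:exist} on $[\delta,1]$, deduce uniqueness from the uniqueness clause of Proposition~\ref{prop:exist} plus backward uniqueness for the ODE, and read off monotonicity from the strict concavity of $H'$. Your monotonicity argument (unimodality of $H'$ from $H'''<0$ and the matching endpoint values $H'(0)=H'(1)=0$) is a direct shape analysis, whereas the paper argues the same fact by contradiction; both are equivalent. One minor point: the claim that the concatenated orbit satisfies $H>0$ on $(0,1]$ ``by construction'' is not actually established by Proposition~\ref{prop:exist} and is not needed for the proposition as stated (positivity enters only as a hypothesis in the uniqueness/monotonicity clause), so you may simply drop that remark.
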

\begin{proof}
In the shooting argument we have shown the existence of a connecting orbit between $\cB_\delta^\theta$ and $\cB_1$ for any sufficiently small $\delta>0$, which guarantees that $\cB_\delta^\theta$ is a smooth real-analytic sub-manifold. In the proof of Proposition~\ref{prop:exist}, we have seen that the shooting parameter $H''(\delta)>0$ can be chosen within a compact subset $\cK_\delta\subset \cB_\delta^\theta$ to achieve an overshoot and an undershoot. Furthermore, the construction in Lemma~\ref{lem:poslittle} guarantees that the connecting orbit from $\cB_\delta^\theta$ to $\cB_1$ locally matches the expansion near the contact line. 

For $\theta=0$, the result about uniqueness follows from the existence proof. For monotonicity, suppose $H'$ vanishes for some $\tau\in(\delta,1)$, then $H''$ must have changed sign previously as the solution starts with positive curvature near the contact line. Hence, the signs of the vector field~\eqref{eq:ODE_f} yield that $H'(1)=0$ cannot occur as $H>0$ on $(0,1]$ implies that $H''$ is decreasing on $(0,1]$.
\end{proof}

It should be noted that it is unclear what happens if we allow for the possibility that $H(x_*)=0$ for some $x_*\in(0,1)$ in the arguments above. In this case, the term $H^{n-1}$ does not directly make sense if $H'(x_*)<0$ since the second line of \eqref{eq:ODE_f} implies that $H$ would become negative. However, the next direct argument shows that for nonzero contact angles the solution must be unique; we remark that the uniqueness result for zero contact angles is well-known (cf.~\cite[Th.~1.2]{bpw.1992}).

\subsection{Uniqueness for nonzero dynamic contact angles\label{ssec:global_unique}}
The proof of uniqueness of classical solutions to~\eqref{problem_x} with $\theta > 0$ closely follows the argumentation in \cite[\S 6]{bpw.1992}. For $n = 1$, \eqref{problem_x} reduces to a third-order ODE with three boundary conditions, thus directly leading to the unique explicit representation~\eqref{n1_explicit}. We treat the cases $n \in (0,1)$ and $n \in (1,3)$ separately.
\begin{proposition}
Suppose $\theta > 0$ and $n\in(1,3)$. Then the solution to~\eqref{problem_x} is unique.
\end{proposition}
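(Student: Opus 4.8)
The plan is to follow the strategy of Bernis--Peletier--Williams for the zero-contact-angle case, adapting it to $\theta>0$. Suppose $H_1$ and $H_2$ are two solutions to~\eqref{problem_x} with the same $\theta>0$ and the same mobility exponent $n\in(1,3)$. The idea is to argue by contradiction: assume $H_1\not\equiv H_2$ and extract a sign condition near the contact line $x=0$ that propagates all the way to $x=1$, where it contradicts the Neumann condition~\eqref{bcx2}. First I would record that both solutions have the leading-order behavior $H_i=\theta x(1+o(1))$ as $x\searrow 0$ (forced by~\eqref{bcx1}), and that by Theorems~\ref{th:0n3_ne2} and~\ref{th:n2} (together with the uniqueness of the curvature parameter $b$ that parametrizes $\cB_\delta^\theta$) the two solutions are distinguished precisely by their curvature parameters $b_1\ne b_2$, say $b_1>b_2$; equivalently $H_1''(0^+)>H_2''(0^+)$ in the appropriate regularized sense, so $H_1>H_2$ on some interval $(0,\delta)$.

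The key step is a comparison/monotonicity argument for the difference $D:=H_1-H_2$. From~\eqref{ode_third2} one has
\begin{equation*}
H_1^{n-1}\frac{\d^3 H_1}{\d x^3}=H_2^{n-1}\frac{\d^3 H_2}{\d x^3}=-1+x\quad\text{for }x\in(0,1),
\end{equation*}
so $\frac{\d^3 H_i}{\d x^3}=(-1+x)H_i^{1-n}$, and hence for $x\in(0,1)$,
\begin{equation*}
\frac{\d^3 D}{\d x^3}=(-1+x)\left(H_1^{1-n}-H_2^{1-n}\right).
\end{equation*}
Since $1-n<0$, the map $t\mapsto t^{1-n}$ is strictly decreasing on $(0,\infty)$; thus on any interval where $H_1>H_2>0$ we get $H_1^{1-n}-H_2^{1-n}<0$, and since $-1+x<0$ on $(0,1)$, it follows that $\frac{\d^3 D}{\d x^3}>0$ there. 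Now I would run a bootstrapping argument on the signs of $D$, $D'$, $D''$: near $x=0$ we have $D(0)=0$, $D'(0)=0$ (both from~\eqref{bcx1}), and $D''>0$ initially; integrating $D'''>0$ three times from $0$ (using these vanishing data) shows $D''$, $D'$, $D$ all stay positive as long as $H_1>H_2>0$ persists, which is therefore a property that cannot be destroyed — it is self-sustaining on the maximal interval $(0,x_*)$ where both heights are positive. Since $H_2>0$ on $(0,1]$ (either by hypothesis, or because one shows $H_2\geq \theta x$ fails only after $H_2''$ changes sign, which a parallel argument rules out) and $H_1>H_2$, we get $x_*=1$, so $D'>0$ on all of $(0,1)$. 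But then $H_1'(1)=H_2'(1)+D'(1)>H_2'(1)$; since both must equal $0$ by~\eqref{bcx2}, this is a contradiction. Hence $b_1=b_2$ and $H_1\equiv H_2$.

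I expect the main obstacle to be the technical handling of the endpoints and the positivity set. At $x=0$ the coefficient $H_i^{1-n}$ blows up (as $n>1$), so the identity $D'''=(-1+x)(H_1^{1-n}-H_2^{1-n})$ must be justified as an improper-integral statement, with the triple integration starting from a small $\delta>0$ and then passing $\delta\searrow 0$ using the local expansions of Section~\ref{ssec:pw2} (Corollary~\ref{cor:exp_hb}) to control the boundary terms $D(\delta),D'(\delta),D''(\delta)$ and confirm they have the right signs and vanish at the correct rates. The second delicate point is ensuring the positivity set of each $H_i$ is all of $(0,1]$: if one only assumes $H_i>0$ on $(0,1)$ with possible interior zeros, the term $H^{n-1}$ is problematic — as the remark before this proposition already flags — so one should either assume $H>0$ on $(0,1]$ as in the companion propositions, or first show (via $H\geq\theta x$ up to the first sign change of $H''$, plus the sign structure of~\eqref{eq:ODE_f}) that a classical solution with $\theta>0$ cannot vanish in the interior. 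Everything else is a routine sign-chasing of iterated integrals.
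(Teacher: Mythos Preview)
Your approach is correct but takes a different route from the paper. The paper introduces the auxiliary function $\Phi := D\, D'' - \frac{1}{2}(D')^2$ with $D = H_1 - H_2$, shows $\Phi(0^+) = 0$ using only the crude estimate $H_1^{1-n}-H_2^{1-n}=o(x^{1-n})$ (which follows directly from $H_i=\theta x(1+o(1))$, i.e.\ from~\eqref{bcx1} alone), and observes $\Phi' = D\, D''' = (-1+x)\bigl(H_1^{1-n}-H_2^{1-n}\bigr)D \ge 0$ for $n>1$; hence $\Phi \ge 0$, which forces $(D^2)'' \ge 3(D')^2 \ge 0$, and since $(D^2)'$ vanishes at both endpoints it is identically zero, giving $D \equiv 0$. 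Your direct sign-chasing (bootstrapping $D''' > 0$ to $D,\,D',\,D'' > 0$ and contradicting $D'(1)=0$) is more elementary once the local expansions are granted, but it genuinely leans on them: you need $D''(0^+)$ to exist with a definite sign, which requires knowing that every classical solution lies on $W^\txtu(p_\theta)$ and is parametrized by some $b$ --- the analogue of Lemma~\ref{lem:conv_0n32} for $\theta>0$, which is plausible but not actually proved in the paper. The $\Phi$-argument, by contrast, is self-contained and needs nothing beyond the boundary conditions and leading-order asymptotics. One small point: citing Theorems~\ref{th:0n3_ne2} and~\ref{th:n2} to obtain $b_1\ne b_2$ is circular, since those theorems already contain the uniqueness you are proving; the correct references are the local Propositions~\ref{prop:nonres} and~\ref{prop:res} together with ODE uniqueness away from $x=0$.
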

\begin{proof}
Suppose that there are two solutions $H_1$ and $H_2$ to~\eqref{problem_x}. We define the auxiliary function
\begin{equation}\label{def_phi}
\Phi := \left(H_1 - H_2\right) \frac{\d^2}{\d x^2} \left(H_1 - H_2\right) - \frac 1 2 \left(\frac{\d}{\d x} \left(H_1 - H_2\right)\right)^2.
\end{equation}
We first show that in fact $\Phi = 0$ at $x = 0$. Therefore note that $H_1 - H_2 = o(x)$, $\frac{\d}{\d x} (H_1 - H_2) = o(1)$ as $x \searrow 0$ by~\eqref{bcx1}, and that for $\eps > 0$
\begin{eqnarray*}
\frac{\d^2}{\d x^2} \left(H_1 - H_2\right) &\stackrel{\eqref{ode_third2}}{=}& - \int_x^\eps \left(-1+x^\prime\right) \left(H_1^{1-n}\left(x^\prime\right) - H_2^{1-n}\left(x^\prime\right)\right) \d x^\prime + C(\eps) \\
&\stackrel{\eqref{bcx1}}{=}& - \int_x^\eps \left(-1+x^\prime\right) o\left(\left(x^\prime\right)^{1-n}\right) \d x^\prime + C(\eps) \\
&=& o\left(x^{2-n}\right) + \tilde C(\eps),
\end{eqnarray*}
where $C(\eps)$ and $\tilde C(\eps)$ are $\eps$-dependent constants. Using~\eqref{def_phi}, this yields 
\begin{equation}\label{bc_phi}
\Phi = \underbrace{\left(H_1 - H_2\right)}_{= o(x)} \underbrace{\frac{\d^2}{\d x^2} \left(H_1 - H_2\right)}_{= o\left(x^{2-n}\right) + \tilde C(\eps)} - \frac 1 2 \underbrace{\left(\frac{\d}{\d x} \left(H_1 - H_2\right)\right)^2}_{= o(1)} = o(1) \quad \mbox{as} \quad x \searrow 0.
\end{equation}
Then we may infer again using~\eqref{def_phi}
\[
\frac{\d \Phi}{\d x} = \left(H_1 - H_2\right) \frac{\d^3}{\d x^3} \left(H_1 - H_2\right) \stackrel{\eqref{ode_third2}}{=} \left(H_1 - H_2\right) \left(-1+x\right) \left(H_1^{1-n} - H_2^{1-n}\right),
\]
that is, $\frac{\d \Phi}{\d x} \ge 0$ in $(0,1)$ because of $n > 1$. Due to the boundary condition \eqref{bc_phi} this yields $\Phi \ge 0$ in $(0,1)$, which by~\eqref{def_phi} implies
\[
\left(H_1 - H_2\right) \frac{\d^2}{\d x^2} \left(H_1 - H_2\right) \ge \frac 1 2 \left(\frac{\d}{\d x} \left(H_1 - H_2\right)\right)^2 \ge 0.
\]
The latter leads to
\begin{align*}
\frac{\d^2}{\d x^2} \left(H_1 - H_2\right)^2 &= 2 \frac{\d}{\d x} \left(\left(H_1 - H_2\right) \frac{\d}{\d x} \left(H_1 - H_2\right)\right) \\
&= 2 \left(H_1 - H_2\right) \frac{\d^2}{\d x^2} \left(H_1 - H_2\right) + 2 \left(\frac{\d}{\d x} \left(H_1 - H_2\right)\right)^2 \\
&\ge 3 \left(\frac{\d}{\d x} \left(H_1 - H_2\right)\right)^2 \ge 0.
\end{align*}
Hence, $\frac{\d}{\d x} \left(H_1 - H_2\right)^2$ is an increasing function in $(0,1)$ with boundary values
\[
\frac{\d}{\d x} \left(H_1 - H_2\right)^2 \stackrel{\eqref{bcx1}}{=} 0 \quad \mbox{at} \quad x = 0 \qquad \mbox{and} \qquad \frac{\d}{\d x} \left(H_1 - H_2\right)^2 \stackrel{\eqref{bcx2}}{=} 0 \qquad \mbox{at} \quad x = 1.
\]
Necessarily we have $\frac{\d}{\d x} \left(H_1 - H_2\right)^2 \equiv 0$ in $(0,1)$ and once more appealing to~\eqref{bcx1} we conclude $H_1 \equiv H_2$ in $(0,1)$.
\end{proof}
\begin{proposition}
Suppose $\theta > 0$ and $n\in(0,1)$. Then the solution to~\eqref{problem_x} is unique.
\end{proposition}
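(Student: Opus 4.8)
The plan is to parallel the previous proposition and \cite[\S 6]{bpw.1992}, the essential new difficulty being that the decisive monotonicity now has the opposite sign. Let $H_1,H_2$ be two solutions of~\eqref{problem_x} for the given $\theta>0$ and $n\in(0,1)$; both are positive on $(0,1)$ (as implicit in~\eqref{ode_third2}) and satisfy $H_i=\theta x(1+o(1))$ as $x\searrow0$ (cf.~\eqref{bcx1}, Theorem~\ref{th:0n3_ne2}). Put $w:=H_1-H_2$, so that $w(0)=0$, $w'(0)=0$, $w'(1)=0$ by~\eqref{bcx1}--\eqref{bcx2}, and, subtracting the two instances of~\eqref{ode_third2},
\[
w''' = (-1+x)\bigl(H_1^{1-n}-H_2^{1-n}\bigr)\qquad\text{on }(0,1).
\]
Since $t\mapsto t^{1-n}$ is \emph{increasing} for $n<1$, the factor $H_1^{1-n}-H_2^{1-n}$ has the same sign as $w$, and as $-1+x<0$ on $(0,1)$ we obtain $w\,w'''\le0$ there. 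Consequently the function $\Phi$ of~\eqref{def_phi} (with $H_1-H_2=w$) obeys $\Phi'=w\,w'''\le0$, and the boundary computation of the preceding proof still gives $\Phi(0)=0$ (now using that $H_i\sim\theta x$ keeps $H_i''(0^+)$ finite while $w=o(x)$ and $w'=o(1)$). Hence $\Phi\le0$ on $[0,1)$, that is $w\,w''\le\tfrac12(w')^2$ — the \emph{reverse} of the inequality that closed the argument for $n\in(1,3)$ — so one cannot conclude as before.

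To remedy this I would linearize: by the mean value theorem $H_1^{1-n}-H_2^{1-n}=(1-n)\,\xi^{-n}\,w$ for some $\xi=\xi(x)$ between $H_1(x)$ and $H_2(x)$, whence $w$ solves
\[
w'''+a(x)\,w=0,\qquad a(x):=(1-n)(1-x)\,\xi(x)^{-n}>0\quad\text{on }(0,1),
\]
with $a(x)=O(x^{-n})$ (hence integrable at $x=0$) by $H_i\sim\theta x$, and $a(x)=O(1-x)$ as $x\nearrow1$ when $H_1(1),H_2(1)>0$. Writing $v:=w'$ (so $v(0)=v(1)=0$ and $v''=-a\,w$ with $w(x)=\int_0^x v(t)\,\d t$), multiplying by $v$ and integrating by parts twice, with all endpoint contributions at $x=0$ vanishing thanks to $w=o(x)$, yields the identity
\[
\int_0^1 (w'')^2\,\d x+\frac12\int_0^1 a'(x)\,w(x)^2\,\d x=\frac12\,a(1)\,w(1)^2 .
\]
I would then combine this with $w\,w''\le\tfrac12(w')^2$ and with weighted variants of the same calculation — multiplying $v''=-a\,w$ by $(1-x)\,v$, or by $\rho(x)\,v$ for a weight $\rho$ adapted to the profile of $a$ and to $w'(1)=0$ — in the spirit of \cite[\S 6]{bpw.1992}, so as to force $\int_0^1(w'')^2\,\d x=0$; then $w''\equiv0$ on $(0,1)$, and $w(0)=w'(0)=0$ give $w\equiv0$, i.e.\ $H_1\equiv H_2$. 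The remaining cases $n=1$ and $n\in(1,3)$ being already settled, this completes uniqueness for all $n\in(0,3)$.

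The main obstacle is that the coefficient $a$ is \emph{not} monotone — it blows up like $x^{-n}$ near the contact line and vanishes at $x=1$ — so the energy identity above carries no definite sign by itself; extracting coercivity amounts to a weighted Hardy/Poincar\'e-type estimate whose constants must beat $\tfrac12(-a')$ near $x=0$, and one must in addition track the endpoint $x=1$ carefully (ruling out, or treating separately, the degenerate possibility $H_i(1)=0$ with $H_i'(1)=0$). This is precisely the place where the more delicate bookkeeping of \cite[\S 6]{bpw.1992} is required.
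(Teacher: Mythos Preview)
Your proposal is not a proof but an outline that stops precisely where the real difficulty begins, as you yourself acknowledge. The energy identity you derive (note, incidentally, that $a(1)=0$ since $a$ carries the factor $(1-x)$, so the right-hand side actually vanishes) reads $\int_0^1 (w'')^2 + \tfrac12\int_0^1 a' w^2 = 0$, and would finish the job only if $a'\ge 0$. You correctly observe that $a'$ has no sign; but the ``weighted variants in the spirit of \cite[\S6]{bpw.1992}'' that you invoke to repair this are never specified, and it is not clear they can be made to work. There is also a genuine technical gap: the intermediate value $\xi(x)$ furnished by the mean value theorem is merely a measurable selection between $H_1(x)$ and $H_2(x)$, not a differentiable function, so $a'$ is not even defined as written; any rigorous version of the integration by parts must go through the actual nonlinearity, not through~$\xi$.

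The paper's argument is entirely different and avoids all of this. It rescales and reflects from the symmetry point: setting $\tilde H:=H/H(1)$, $a:=(H(1))^{-n}$, $\tilde x:=a(1-x)$ turns~\eqref{problem_x} into $\tilde H^{n-1}\tilde H'''=\tilde x$ on $(0,a)$ with $(\tilde H,\tilde H')=(1,0)$ at $\tilde x=0$ and $(\tilde H,\tilde H')=(0,-\tilde\theta)$ at $\tilde x=a$. Two solutions $(\tilde H_1,a_1)$, $(\tilde H_2,a_2)$ now agree to first order at the \emph{regular} endpoint $\tilde x=0$; if $\Psi:=\tilde H_1-\tilde H_2\not\equiv 0$ one may assume $\Psi''(0)>0$. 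Since $n<1$, $\Psi'''=\tilde x(\tilde H_1^{1-n}-\tilde H_2^{1-n})$ has the sign of $\Psi$, forcing $\Psi$ and $\Psi'$ to stay positive on $(0,a_2)$; in particular $\tilde H_1'(a_2)>0$. But $\tilde H_1'''=\tilde x\,\tilde H_1^{1-n}>0$ makes $\tilde H_1'$ strictly convex on $(0,a_1)$ with $\tilde H_1'(0)=0$ and $\tilde H_1'(a_1)<0$, hence $\tilde H_1'<0$ on $(0,a_1)$ --- a contradiction. The key idea you are missing is this change of viewpoint to the symmetry point, which replaces your indefinite weighted energy estimate by a two-line convexity argument.
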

\begin{proof}
For $n \in (0,1)$, the auxiliary function $\Phi$ from~\eqref{def_phi} is not necessarily monotonic and therefore a different approach has to be used. Here, another scaling transformation than the one used to derive~\eqref{problem_x} turns out to be convenient, that is, we may define $\tilde H := H/H(1)$, $a:= \left(H(1)\right)^{-n}$, $\tilde{\theta} := (H(1))^{n-1} \theta$, and $\tilde x := a (1-x)$, so that~\eqref{problem_x} is equivalent to
\begin{subequations}\label{problem_til}
\begin{align}
\tilde H^{n-1} \frac{\d^3 \tilde H}{\d x^3} &= \tilde x \quad \mbox{for} \quad \tilde x \in (0,a),\label{eq_til}\\
\left(\tilde H, \frac{\d \tilde H}{\d \tilde x}\right) &= \left(0,-\tilde{\theta}\right) \quad \mbox{at} \quad \tilde x = a,\\
\left(\tilde H, \frac{\d \tilde H}{\d \tilde x}\right) &= (1,0) \quad \mbox{at} \quad \tilde x = 0,\label{bc_til2}
\end{align}
\end{subequations}
where $\tilde H$ and $a$ are the unknowns. Now suppose that we have two solutions $\left(H_1,a_1\right)$ and $\left(H_2,a_2\right)$ of~\eqref{problem_til}. Defining $\Psi := \tilde H_1 - \tilde H_2$, we have $\Psi = \frac{\d \Psi}{\d \tilde x} = 0$ at $\tilde x = 0$ due to \eqref{bc_til2}. As equation~\eqref{eq_til} is non-degenerate in $\tilde x = 0$, by standard theory necessarily $\frac{\d^2 \Psi}{\d \tilde x^2} \ne 0$ at $\tilde x = 0$ and we may assume without loss of generality $\frac{\d^2 \Psi}{\d \tilde x^2} > 0$ at $\tilde x = 0$. Because of $\frac{\d^3 \Psi}{\d x^3} \stackrel{\eqref{eq_til}}{=} \tilde x \left(\tilde H_1^{1-n} - \tilde H_2^{1-n}\right)$ in $\left(0,\min\{a_1,a_2\}\right)$, this implies $a_1 > a_2$. Furthermore, $\Psi$ has to be strictly increasing on the interval $(0,a_2)$, which implies
\begin{equation}\label{ineq_h1}
\frac{\d \tilde H_1}{\d \tilde x}(a_2) = \frac{\d \Psi}{\d x}(a_2) > \frac{\d \Psi}{\d x}(0) = 0.
\end{equation}
On the other hand, we know that $\frac{\d \tilde H_1}{\d \tilde x} = 0$ at $\tilde x = 0$, $\frac{\d \tilde H_1}{\d \tilde x} < 0$ at $\tilde x = a_1$, and that $\frac{\d \tilde H_1}{\d \tilde x}$ is a strictly convex function in $(0,a_1)$ by~\eqref{eq_til}. Therefore, $\frac{\d \tilde H_1}{\d \tilde x} < 0$ in $(0,a_1)$, which contradicts \eqref{ineq_h1}.
\end{proof}

\appendix

\section{Higher Dimensions}\label{app:higher}
In this appendix, we discuss the regularity of source-type self-similar solutions in arbitrary dimensions $d \in \N$. Then the thin-film equation \eqref{tfe} reads
\begin{subequations}\label{source_pr_high}
\begin{equation}\label{tfe_higher}
\partial_t h + \nabla \cdot \left(h^n \nabla \Delta h\right) = 0 \quad \mbox{for } \, (t,z) \in \{h > 0\}.
\end{equation}
The initial condition is
\begin{equation}\label{init_high}
\lim_{t \searrow 0} h = M \delta_0 \quad \mbox{in} \quad \mathcal D^\prime\left(\R^d\right),
\end{equation}
\end{subequations}
where $M > 0$ denotes the mass of the fluid film. Existence, uniqueness, and leading-order asymptotics of this problem were studied by Bernis and Ferreira in \cite{bf.1997} under the assumption
\begin{equation}\label{similar_high}
h(t,z) = t^{-\frac{d}{n d + 4}} H(Z) \quad \mbox{with } \, Z = t^{-\frac{1}{n d + 4}} \verti{z},
\end{equation}
where $H \in C^1(\R) \cap C^3\left(\{H > 0\}\right) \cap L^1(\R)$ and $H^n \frac{\d^3 H}{\d Z^3} \in C^1\left(\{H > 0\}\right)$ (corresponding to the zero static contact-angle case). Then one may verify
\begin{align*}
\partial_t h &= - \frac{1}{n d + 4} t^{-\frac{(n+1) d + 4}{n d + 4}} Z^{1-d} \frac{\d}{\d Z} \left(Z^d H\right), \\
\nabla \cdot \left(h^n \nabla \Delta h\right) &= t^{- \frac{(n+1) d + 4}{n d + 4}} Z^{1-d} \frac{\d}{\d Z} \left(Z^{d-1} H^n \frac{\d}{\d Z} \left(Z^{1-d} \frac{\d}{\d Z} \left(Z^{d-1} \frac{\d H}{\d Z}\right)\right)\right),
\end{align*}
and therefore
\begin{equation}\label{four_high}
\frac{\d}{\d Z} \left(Z^{d-1} H^n \frac{\d}{\d Z} \left(Z^{1-d} \frac{\d}{\d Z} \left(Z^{d-1} \frac{\d H}{\d Z}\right)\right)\right) = \frac{1}{n d + 4} \frac{\d}{\d Z} \left(Z^d H\right) \quad \mbox{in } \, \{H > 0\}.
\end{equation}
We may assume the boundary condition $H = \frac{\d H}{\d Z} = 0$ at $Z \in \partial \{H > 0\}$, so that one trivial integration yields
\begin{equation}\label{three_high}
H^{n-1} \frac{\d}{\d Z} \left(Z^{1-d} \frac{\d}{\d Z} \left(Z^{d-1} \frac{\d H}{\d Z}\right)\right) = \frac{1}{n d + 4} Z \quad \mbox{in } \, \{H > 0\}.
\end{equation}
By the same reasoning as in Section~\ref{sec:transformations}, we may assume that the free boundary is at the positions $\pm Z_0$ with some constant $Z_0 > 0$. By rescaling without loss of generality $Z_0 = 1$ and by setting $x := Z+1$, we are led to study the problem (compare to \eqref{problem_x})
\begin{subequations}\label{pr_x_high}
\begin{align}
H^{n-1} \left(\frac{\d^3 H}{\d x^3} - \frac{d-1}{1-x} \frac{\d^2 H}{\d x^2} - \frac{d-1}{(1-x)^2} \frac{\d H}{\d x}\right) &= -1+x\quad \mbox{for } \, x \in \left(0,1\right),\label{ode_third_high2}\\
\left(H,\frac{\d H}{\d x}\right) &= \left(0,0\right) \quad \mbox{at } \, x = 0,\label{bcx_high1}\\
\frac{\d H}{\d x} &= 0 \quad \mbox{at } \, x = 1,\label{bcx_high2}
\end{align}
\end{subequations}
with $H \in C^3((0,1)) \cap C^1([0,1])$ (\emph{classical solutions}). The result in \cite{bf.1997} implies existence and uniqueness of classical solutions to \eqref{pr_x_high}. Unlike in the $1+1$-dimensional case, where we have also constructed solutions in the case of nonzero dynamic contact angles, here we only consider zero static contact angles. It seems that our method of constructing solutions does not apply in an apparent way to \eqref{pr_x_high} as singular terms of \eqref{ode_third_high2} at the symmetry point $x = 1$ appear. In some sense, these terms slow down the dynamics and move the symmetry point to infinity in de-singularized coordinates, which makes the construction of solutions by a shooting argument, starting at the contact line, less accessible and a shooting argument as in \cite{bf.1997}, starting at the point of symmetry, more favorable.

\medskip

For \eqref{pr_x_high} we can show the following results:
\begin{theorem}\label{th:high_0n32_0}
Suppose $n \in (0,3/2)$ and $H>0$ for $x\in(0,1]$. Then the unique solution $H$ of problem~\eqref{problem_x} fulfills the asymptotic
\[
H = \kappa x^2 \left(1 + v\left(x,x^\beta\right)\right) \quad \mbox{for } \, 0 \le x \ll 1,
\]
where $\kappa > 0$ is an $n$-dependent constant, $\beta = 3 - 2 n$, and $v = v(x_1,x_2)$ is an analytic function in a neighborhood of $(x_1,x_2) = (0,0)$, $v(0,0)=0$ but $v\not\equiv 0$ with a non-trivial dependence on both $x_1$ and $x_2$.
\end{theorem}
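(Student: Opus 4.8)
The plan is to transcribe the dynamical-systems argument of \S\ref{ssec:cw} to the higher-dimensional ODE \eqref{ode_third_high2}, exploiting that the extra geometric terms $-\frac{d-1}{1-x}\frac{\d^2H}{\d x^2}-\frac{d-1}{(1-x)^{2}}\frac{\d H}{\d x}$ have coefficients that are real-analytic near the contact line $x=0$: their only singularity is at the symmetry point $x=1$, which never enters the local analysis. Existence and uniqueness of $H$ and the leading-order asymptotic $H=\kappa x^{2}(1+o(1))$ are available from \cite{bf.1997}. Following \eqref{trafo_0n32} I set $H=:x^{2}F$ for $0\le x\ll1$; using $\frac{\d^2H}{\d x^2}=(x\frac{\d}{\d x}+1)(x\frac{\d}{\d x}+2)F$ and $\frac{\d^3H}{\d x^3}=\frac1x q(x\frac{\d}{\d x})F$ with $q$ from \eqref{def_q}, multiplying \eqref{ode_third_high2} by $x^{3-2n}/F^{n-1}$, and passing to $s:=\log x$, equation \eqref{ode_third_high2} becomes
\begin{align*}
q\left(\frac{\d}{\d s}\right)F&=\frac{-\txte^{(3-2n)s}+\txte^{(4-2n)s}}{F^{n-1}}\\
&\quad+\frac{(d-1)\txte^{s}}{1-\txte^{s}}\left(\frac{\d}{\d s}+1\right)\left(\frac{\d}{\d s}+2\right)F+\frac{(d-1)\txte^{2s}}{(1-\txte^{s})^{2}}\left(\frac{\d}{\d s}+2\right)F .
\end{align*}
Introducing $x_{2}:=x^{3-2n}=\txte^{(3-2n)s}$ and $x_{3}:=x=\txte^{s}$ — so the remaining monomial $x^{4-2n}=x_{2}x_{3}$ needs \emph{no} separate coordinate; adding one would only create a spurious transverse direction — together with $F':=\frac{\d F}{\d s}$, $F'':=\frac{\d^{2}F}{\d s^{2}}$, I obtain an autonomous five-dimensional system $\frac{\d}{\d s}(x_{2},x_{3},F,F',F'')^{\top}=\tilde{\mathcal F}_{d}(\cdot)$ whose right-hand side is real-analytic near $p_{\kappa}:=(0,0,\kappa,0,0)$ for every $\kappa>0$ (the new terms enter as analytic functions of $x_{3}$ vanishing at $x_{3}=0$), with $\tilde{\mathcal F}_{d}(p_{\kappa})=0$.

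First I would show that every trajectory arising from a solution of \eqref{pr_x_high} with $H>0$ on $(0,1]$ converges to $p_{\kappa}$ as $s\to-\infty$: $F\to\kappa$, $x_{2}\to0$, $x_{3}\to0$ are immediate from \cite{bf.1997}, while $F'\to0$ and $F''\to0$ follow, as in Lemma~\ref{lem:conv_0n32}, by upgrading the asymptotics to $\frac{\d H}{\d x}=2\kappa x(1+o(1))$ and $\frac{\d^2H}{\d x^2}=2\kappa(1+o(1))$; here one runs the substitution $\psi:=(\frac{\d H}{\d x})^{2}$ as a function of $H$ and checks that the extra terms $\frac{d-1}{1-x}\frac{\d^2H}{\d x^2}$ and $\frac{d-1}{(1-x)^{2}}\frac{\d H}{\d x}$ are of lower order near the contact line. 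Next, the linearization $\txtD\tilde{\mathcal F}_{d}(p_{\kappa})$ is block triangular with characteristic polynomial $(\zeta-(3-2n))(\zeta-1)\,q(\zeta)$, $q(\zeta)=\zeta(\zeta+1)(\zeta+2)$; for $n\in(0,3/2)$ this gives the positive eigenvalues $3-2n$ and $1$, the eigenvalue $0$, and the negative eigenvalues $-1$ and $-2$, hence $\dim W^{\txtu}(p_{\kappa})=2$, $\dim W^{\txtc}(p_{\kappa})=1$, $\dim W^{\txts}(p_{\kappa})=2$. The center eigenspace is the $F$-axis, a line of equilibria, so $W^{\txtc}(p_{\kappa})$ coincides with this analytic coordinate axis; one checks that no eigenvalue coincidence in $(0,3/2)$ creates a nontrivial Jordan block (at $n=1$ the double eigenvalue $1$ remains semisimple).

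From here the argument parallels Proposition~\ref{prop:steady_0n32}: a tubular neighbourhood of $W^{\txtc}(p_{\kappa})$ is foliated by $W^{\txts}(p_{\kappa})$ and $W^{\txtu}(p_{\kappa})$, so the relevant trajectory (existing for $s\ll-1$ and limiting onto $p_{\kappa}$) lies on $W^{\txtu}(p_{\kappa})$, which is real-analytic by \cite[p.~330]{CoddingtonLevinson}. Computing the two unstable eigenvectors shows that $E^{\txtu}(p_{\kappa})$ projects isomorphically onto the $(x_{2},x_{3})$-plane throughout $n\in(0,3/2)$, so $W^{\txtu}(p_{\kappa})$ is an analytic graph $(F,F',F'')=g(x_{2},x_{3})$ with $g(0,0)=(\kappa,0,0)$. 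Writing $F=g_{1}(x_{2},x_{3})=:\kappa(1+v(x_{3},x_{2}))$ with $v$ analytic near the origin, $v(0,0)=0$, and substituting $x_{3}=x$, $x_{2}=x^{\beta}$ with $\beta=3-2n$ (a mere relabelling, since $x^{4-2n}=x_{2}x_{3}$ is polynomial in these), I obtain $H=x^{2}F=\kappa x^{2}(1+v(x,x^{\beta}))$. Unlike in Theorem~\ref{th:0n32_0} I do not claim $\kappa$-independence of $v$: the constraint $x^{4-2n}=x_{2}x_{3}$ is incompatible with the scaling symmetry used there. Finally, for non-triviality I would insert $H=\kappa x^{2}(1+v(x,x^{\beta}))$ into \eqref{ode_third_high2} and match coefficients — as in Lemma~\ref{lem:ntcw}, treating $x$ and $x^{\beta}$ as independent via $x\frac{\d}{\d x}\mapsto x_{3}\partial_{x_{3}}+(3-2n)x_{2}\partial_{x_{2}}$ — obtaining an $x^{3}$-coefficient proportional to $d-1$ and a nonzero $x^{5-2n}$-coefficient; hence $v\not\equiv0$ with genuine dependence on both arguments, while for $d=1$ the $x^{3}$-coefficient vanishes and a mixed second-order term carries the $x$-dependence, consistently with Theorem~\ref{th:0n32_0}.

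The main obstacle is the single genuinely new point above: establishing the first- and second-derivative asymptotics of $H$ at the contact line for \eqref{ode_third_high2}, i.e.\ the analogue of Taliaferro's argument (Lemma~\ref{lem:conv_0n32}) in the presence of the geometric terms — alternatively one may simply invoke \cite{bf.1997}, which treats exactly this problem. Everything else is a routine transcription of \S\ref{ssec:cw}, the only structural change being that the coordinate $x_{3}=x$, forced by the higher-dimensional geometry, takes the role that $x_{1}=x^{4-2n}$ played in the $1+1$-dimensional setting.
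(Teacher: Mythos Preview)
Your proposal is correct and follows essentially the same route as the paper: factor $H=x^{2}F$, pass to $s=\log x$, augment by $x_{1}:=\txte^{s}$ and $x_{2}:=\txte^{(3-2n)s}$ (your $x_{3},x_{2}$), compute the same characteristic polynomial $(\zeta-1)(\zeta-(3-2n))q(\zeta)$, and invoke the analytic unstable-manifold graph over $(x_{1},x_{2})$ exactly as in Proposition~\ref{prop:steady_0n32} and Lemma~\ref{lem:ntcw}. Your additional remarks (semisimplicity of the double eigenvalue at $n=1$, the $(d-1)$-proportionality of $\partial_{x_{1}}v(0,0)$, and the absence of the $\kappa$-scaling) are all correct refinements the paper leaves implicit.
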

\begin{theorem}\label{th:high_32n3}
Suppose $n \in (3/2,3)$ and $H>0$ for $x\in(0,1]$. Then the unique solution $H$ of problem~\eqref{problem_x} fulfills the asymptotic
\[
H = \mu^{- \frac 1 n} x^{\frac 3 n} \left(1 + v\left(x,x^\beta\right)\right) \quad \mbox{for } \, 0 \le x \ll 1,
\]
where $\mu = \frac 3 n \left(\frac 3 n - 1\right) \left(2 - \frac 3 n\right)$, $\beta = \frac{\sqrt{-27+36n-8n^2}-9+4n}{2n} \in (0,1)$, and $v = v(x_1,x_2)$ is an analytic function in a neighborhood of $(x_1,x_2) = (0,0)$, $v(0,0)=0$ but $v\not\equiv 0$ with an in general non-trivial dependence on both $x_1$ and $x_2$.
\end{theorem}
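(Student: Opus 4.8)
The plan is to reproduce the dynamical-systems reduction of Section~\ref{ssec:cw} (and of \cite{ggo.2013} for the range $n\in(\tfrac32,3)$), the only novelty being the two lower-order terms $\tfrac{d-1}{1-x}$ and $\tfrac{d-1}{(1-x)^2}$ in \eqref{ode_third_high2}, which are analytic at $x=0$ and vanish there, hence perturb coefficients but not the leading exponents. First I would set $H=:x^{3/n}F$, pass to $s:=\log x$ so that $x\frac{\d}{\d x}=\frac{\d}{\d s}$, write $\alpha:=\tfrac3n\in(1,2)$ and $\tilde q(\zeta):=(\zeta+\alpha)(\zeta+\alpha-1)(\zeta+\alpha-2)$, and use $x^k\tfrac{\d^k}{\d x^k}=\tfrac{\d}{\d s}(\tfrac{\d}{\d s}-1)\cdots(\tfrac{\d}{\d s}-k+1)$ together with $x^{\alpha(n-1)}x^{\alpha}=x^{3}$ to bring \eqref{ode_third_high2} into
\[
\tilde q\!\left(\tfrac{\d}{\d s}\right)F=\frac{-1+\txte^{s}}{F^{n-1}}+(d-1)\frac{\txte^{s}}{1-\txte^{s}}\Big[F''+(2\alpha-1)F'+\alpha(\alpha-1)F\Big]+(d-1)\frac{\txte^{2s}}{(1-\txte^{s})^{2}}\big[F'+\alpha F\big],
\]
with $F':=\frac{\d F}{\d s}$, $F'':=\frac{\d^2F}{\d s^2}$. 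Adjoining $x_1:=\txte^{s}=x$ with $\dot x_1=x_1$ turns this into an autonomous four-dimensional ODE for $(x_1,F,F',F'')$ with analytic right-hand side near the steady state $p:=(0,\mu^{-1/n},0,0)$, where $\mu=\alpha(\alpha-1)(2-\alpha)=-\tilde q(0)>0$ for $n\in(\tfrac32,3)$, so that $\tilde q(0)\,\mu^{-1/n}\,\mu^{-(n-1)/n}=-1$ and the vector field vanishes at $p$. That any admissible solution of \eqref{pr_x_high} converges to $p$ as $s\to-\infty$ with $F',F''\to0$ follows from the leading-order asymptotics of \cite{bf.1997} together with the upgrading argument of Lemma~\ref{lem:conv_0n32} (run in the rescaled variable $F$, since $F'$ and $F''$ are built from $xH_x-\alpha H$ and one further $s$-derivative, both $o(x^{3/n})$).

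Next I would linearize at $p$. The $x_1$-equation contributes the eigenvalue $1$; the $(F,F',F'')$-block contributes the three roots of $\tilde q(\zeta)=(n-1)\mu$, and the perturbation terms, carrying a factor $x_1$, do not enter this block — which is why the result is $d$-independent. A direct substitution shows that $\zeta=-1$ is a root of $\tilde q(\zeta)-(n-1)\mu$, whence
\[
\tilde q(\zeta)-(n-1)\mu=(\zeta+1)\left(\zeta^{2}+\tfrac{9-4n}{n}\,\zeta+\tfrac{3(2n-3)(n-3)}{n^{2}}\right),
\]
and the quadratic factor, having product of roots $\tfrac{3(2n-3)(n-3)}{n^{2}}<0$ on $(\tfrac32,3)$, has exactly one positive root, namely $\beta=\tfrac{\sqrt{-27+36n-8n^{2}}-9+4n}{2n}$, and one negative root $\lambda$; elementary estimates (using $\alpha>1$) give $-27+36n-8n^2>0$ on $(\tfrac32,3)$ and $\beta\in(2-\alpha,1)\subset(0,1)$, in particular $\beta\neq1$. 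Hence $p$ is hyperbolic with spectrum $\{1,\beta,-1,\lambda\}$: a two-dimensional unstable and a two-dimensional stable manifold. The trajectory we need lies on $W^{\txtu}(p)$, which is analytic by \cite[p.~330]{CoddingtonLevinson}; since the unstable eigenvectors $(c,1,1,1)$ (for eigenvalue $1$, with $c=6(\alpha-1)^2/c_1\neq0$) and $(0,1,\beta,\beta^{2})$ (for eigenvalue $\beta$) project isomorphically onto the $(x_1,F)$-plane, $W^{\txtu}(p)$ is an analytic graph $(F',F'')=(G_1,G_2)(x_1,F)$, reducing the dynamics near the contact line to the scalar non-autonomous ODE $x\frac{\d F}{\d x}=G_1(x,F)$ with $G_1$ analytic, $G_1(0,\mu^{-1/n})=0$, and $\partial_FG_1(0,\mu^{-1/n})=\beta$.

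Finally I would solve this scalar equation exactly as in the non-resonant case of Proposition~\ref{prop:nonres}. Setting $u:=F-\mu^{-1/n}$ it reads $\left(x\frac{\d}{\d x}-\beta\right)u=G(x,u)$ with $G$ analytic, $G(0,0)=\partial_uG(0,0)=0$; unfolding $u(x)=v(x,x^{\beta})$ and replacing $x\frac{\d}{\d x}$ by $x_1\partial_{x_1}+\beta x_2\partial_{x_2}$ gives $\left(x_1\partial_{x_1}+\beta x_2\partial_{x_2}-\beta\right)v=G(x_1,v)$ around $(x_1,x_2)=(0,0)$. The monomial $x_2$ is in the kernel, and since $\beta\in(0,1)$ one has $\inf\{\,|k+\beta(\ell-1)|:(k,\ell)\in(\N_0)^{2}\setminus\{(0,1)\}\,\}=\min\{\beta,1-\beta\}>0$, so no resonances occur and the associated solution operator is bounded on the weighted analytic norm of the type in \eqref{def_norm}; the contraction-mapping argument then produces an analytic $v$ with $v(0,0)=0$ (one solution for each value of the curvature parameter $\partial_{x_2}v(0,0)$, fixed later by the global shooting as in Section~\ref{sec:global}). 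Re-inserting $v$ into the transformed equation and comparing lowest-order coefficients yields $\partial_{x_1}v(0,0)\neq0$ and $\partial_{x_2}v(0,0)\neq0$ in general, so $v\not\equiv0$ with non-trivial dependence on both arguments; transforming back via $H=x^{3/n}F=\mu^{-1/n}x^{3/n}\bigl(1+v(x,x^{\beta})\bigr)$ completes the proof. The main obstacle is the convergence step $(x_1,F,F',F'')\to p$ with $F',F''\to0$: unlike for $n\in(0,\tfrac32)$, here $\tfrac{\d^2H}{\d x^2}$ is unbounded at the contact line, so the decay of the first two $s$-derivatives has to be extracted from the refined asymptotics near $x=0$ rather than from boundedness of $H''$, and the extra singular coefficients must be controlled in the same limit.
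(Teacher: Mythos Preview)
Your proposal is correct and follows essentially the same approach as the paper's proof sketch: factor off the leading order $x^{3/n}$, pass to $s=\log x$, set up the four-dimensional autonomous system with analytic right-hand side, identify the hyperbolic steady state with unstable eigenvalues $1$ and $\beta$, reduce to the scalar equation on the two-dimensional unstable manifold, and solve the unfolded equation by contraction in a weighted analytic norm exploiting $\beta\in(0,1)$. The only cosmetic differences are that the paper normalizes so that $F\to 1$ (you have $F\to\mu^{-1/n}$), writes the solution operator as an explicit integral rather than via Taylor coefficients, and reads the eigenvalues from the Jacobian matrix rather than from $\tilde q(\zeta)=(n-1)\mu$; your undefined constant $c_1$ in the eigenvector for eigenvalue $1$ should be $a(a-1)(d+1-a)$, matching the paper's $v_{1,1}=\tfrac{2(3-n)n}{(d+1)n-3}$.
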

Theorem~\ref{th:high_0n32_0} is the generalization of Theorem~\ref{th:0n32_0} to higher dimensions, whereas Theorem~\ref{th:high_32n3} generalizes the one-dimensional result \cite[Th.~1]{ggo.2013}. The leading-order expansions
\[
H = \begin{cases} \kappa x^2 (1 + o(1)) \quad \mbox{as } \, x \searrow 0 & \mbox{for } \, 0 < n < 3/2 \\ \mu^{\frac 1 n} x^{\frac 3 n} (1 + o(1)) \quad \mbox{as } \, x \searrow 0 & \mbox{for } \, 3/2 < n < 3 \end{cases}
\]
were already proven in \cite[Th.~1.3]{bf.1997}.

\begin{proof}[Proof of Theorem~\ref{th:high_0n32_0}]
The proof follows the lines of the proof of Theorem~\ref{th:0n32_0}, which is why we keep the presentation brief and only point out some essential differences. Following the reasoning in Section~\ref{ssec:cw}, we may set $H =: x^2 F$ for $0 \le x \ll 1$, so that -- after passing to $s := \log x$ (cf.~\eqref{log_coord}) -- \eqref{ode_third_high2} transforms into
\begin{equation}\label{ode_high_t1}
\begin{aligned}
& \frac{\d^3 F}{\d s^3} + 3 \frac{\d^2 F}{\d s^2} + 2 \frac{\d F}{\d s} - \frac{(d-1) \txte^s}{1 - \txte^s} \left(\frac{\d^2 F}{\d s^2} + 3 \frac{\d F}{\d s} + 2 F\right) - \frac{(d-1) \txte^{2s}}{(1-\txte^s)^2} \left(\frac{\d F}{\d s} + 2 F\right) \\
& \quad = \frac{-\txte^{(3-2n)s} + \txte^{(4-2n)s}}{F^{n-1}}.
\end{aligned}
\end{equation}
Compared to \eqref{not_0n32} we define more generally
\begin{equation}\label{not_0n32_high}
x_1 := \txte^s, \quad x_2 := \txte^{(3-2n) s}, \quad F^\prime := \frac{\d F}{\d s}, \quad \mbox{and } \, F^{\prime\prime} := \frac{\d^2 F}{\d s^2},
\end{equation}
so that \eqref{ode_high_t1} can be recast as a five-dimensional autonomous dynamical system
\begin{equation}\label{dyn_0n32_high}
\begin{aligned}
\frac{\d}{\d s} \begin{pmatrix} x_1 \\ x_2 \\ F \\ F^\prime \\ F^{\prime\prime} \end{pmatrix} = \mathcal F\left(x_1,x_2,F,F^\prime,F^{\prime\prime}\right) := \, & \begin{pmatrix} x_1 \\ (3-2n) x_2 \\ F^\prime \\ F^{\prime\prime} \\ \mathcal F_5\left(x_1,x_2,F,F^\prime,F^{\prime\prime}\right) \end{pmatrix} \quad \mbox{for } \, - \infty < s \ll 1, \\
\mbox{with} \quad \mathcal F_5\left(x_1,x_2,F,F^\prime,F^{\prime\prime}\right) := \, & \frac{x_1 x_2 - x_2}{F^{n-1}} - 3 F^{\prime\prime} - 2 F^\prime \\
& + \frac{(d-1) x_1}{1-x_1} \left(F^{\prime\prime} + 3 F^\prime + 2 F\right) \\
& + \frac{(d-1) x_1^2}{(1-x_1)^2} \left(F^\prime + 2 F\right).
\end{aligned}
\end{equation}
For $d \ge 2$ it appears to be unavoidable to choose the coordinates as in \eqref{not_0n32_high} and not as in \eqref{not_0n32}, as the additional terms in the last two lines of \eqref{dyn_0n32_high} can otherwise not be written in an analytic form at the stationary point $\left(x_1, x_2, F, F^\prime, F^{\prime\prime}\right) = (0,0,\kappa,0,0) =: p_\kappa$ with $\kappa > 0$ to which the solution converges as $s \to - \infty$ (cf.~\cite[Th.~1.3]{bf.1997} and the analogue of Lemma~\ref{lem:conv_0n32} for higher dimensions). As the linearization of $\mathcal F$ in $p_\kappa$ is given by
\[
\txtD \mathcal F (p_\kappa) = \begin{pmatrix} 1 & 0 & 0 & 0 & 0\\ 0 & 3-2n & 0 & 0 & 0\\ 0 & 0 & 0 & 1 & 0\\ 0 & 0 & 0 & 0 & 1\\ 2 (d-1) \kappa & - \kappa^{1-n} & 0 & -2 & -3 \end{pmatrix},
\]
its characteristic polynomial can be calculated to be
\[
\zeta \mapsto (\zeta-1) (\zeta-(3-2n)) \underbrace{\zeta (\zeta+1) (\zeta+2)}_{\stackrel{\eqref{def_q}}{=} q(\zeta)}
\]
and the same reasoning as in the proofs of Proposition~\ref{prop:steady_0n32} and Lemma~\ref{lem:ntcw} applies.
\end{proof}

\begin{proof}[Proof sketch of Theorem~\ref{th:high_32n3}]
Here, we closely follow the arguments of \cite[\S 2.2]{ggo.2013} and \cite[\S 2]{ggo.2015}. First, we factor off the leading-order behavior (cf.~\cite[Th.~1.3]{bf.1997}) by setting
\[
H =: \mu^{- \frac 1 n} x^{\frac 3 n} F \quad \mbox{with } \, \mu = \frac 3 n \left(\frac 3 n - 1\right) \left(2 - \frac 3 n\right),
\]
so that \eqref{ode_third_high2} can be recast in the variable $s := \log x$ (cf.~\eqref{log_coord}) as
\begin{equation}\label{ode_high_t2}
\begin{aligned}
& \frac{\d^3 F}{\d s^3} + 3 \left(\frac 3 n - 1\right) \frac{\d^2 F}{\d s^2} + \left(3 \left(\frac 3 n\right)^2 - 6 \frac 3 n + 2\right) \frac{\d F}{\d s} - \frac 3 n \left(\frac 3 n - 1\right) \left(2 - \frac 3 n\right) F\\
& - \frac{(d-1) \txte^s}{1-\txte^s} \left(\frac{\d^2 F}{\d s^2} + \left(2 \frac 3 n - 1\right) \frac{\d F}{\d s} + \frac 3 n \left(\frac 3 n - 1\right) F\right) \\
& - \frac{(d-1) \txte^{2s}}{(1-\txte^s)^2} \left(\frac{\d F}{\d s} + \frac 3 n F\right) \\
& \quad = \mu \frac{-1+\txte^s}{F^{n-1}}
\end{aligned}
\end{equation}
Now we may use the independent variables
\[
x = \txte^s, \quad F, \quad F^\prime := \frac{\d F}{\d s}, \quad \mbox{and } \, F^{\prime\prime} := \frac{\d^2 F}{\d s^2},
\]
for which \eqref{ode_high_t2} can be reformulated as a four-dimensional autonomous dynamical system of the form
\begin{equation}\label{dynsys_32n3}
\begin{aligned}
\frac{\d}{\d s} \begin{pmatrix} x \\ F \\ F^\prime \\ F^{\prime\prime} \end{pmatrix} = \mathcal F\left(x,F,F^\prime,F^{\prime\prime}\right) := \, & \begin{pmatrix} x \\ F^\prime \\ F^{\prime\prime} \\ \mathcal F_4\left(x,F,F^\prime,F^{\prime\prime}\right) \end{pmatrix} \quad \mbox{for } \, - \infty < s \ll 1, \\
\mbox{with} \quad \mathcal F_4\left(x,F,F^\prime,F^{\prime\prime}\right) := \, & \mu \frac{-1+x}{F^{n-1}} \\
&- 3 \left(\frac 3 n - 1\right) F^{\prime\prime} - \left(3 \left(\frac 3 n\right)^2 - 6 \frac 3 n + 2\right) F^\prime + \mu F \\
& + \frac{(d-1) x}{1-x} \left(F^{\prime\prime} + \left(2 \frac 3 n - 1\right) F^\prime + \frac 3 n \left(\frac 3 n - 1\right) F\right) \\
& + \frac{(d-1) x^2}{(1-x)^2} \left(F^\prime + \frac 3 n F\right),
\end{aligned}
\end{equation}
where the flow is apparently analytic in a neighborhood of the stationary point $\left(x,F,F^\prime,F^{\prime\prime}\right) = (0,1,0,0) =: p$, to which the solution converges as $s \to - \infty$ (the proof follows the lines of the proof of Lemma~\ref{lem:conv_0n32}). The linearization of the flow in $p$ is given by 
\[
\txtD \mathcal F(p) = \begin{pmatrix} 1 & 0 & 0 & 0 \\ 0 & 0 & 1 & 0 \\ 0 & 0 & 0 & 1 \\ \mu + (d-1) \frac 3 n \left(\frac 3 n - 1\right) & n \mu & - 3 \left(\frac 3 n\right)^2 + 6 \frac 3 n - 2 & - 3 \left(\frac 3 n - 1\right) \end{pmatrix}
\]
and the characteristic polynomial follows as
\[
\zeta \mapsto (\zeta-1) (\zeta - \beta) (\zeta+1) (\zeta + \alpha)
\]
with roots
\[
\alpha := \frac{- \sqrt{-27+36n-8n^2}-9+4n}{2n} \in (-2,0), \quad \beta := \frac{\sqrt{-27+36n-8n^2}-9+4n}{2n} \in (0,1).
\]
In particular the flow $\mathcal F$ \emph{is} hyperbolic in $p$. Now, we may determine the eigenspace $E^\txtu(p)$ of the positive eigenvalues $1$ and $\beta$ (the tangent space to $W^\txtu(p)$), which is spanned by the eigenvectors
\[
v_1 := \left(\frac{2 (3-n) n}{(d+1) n - 3}, 1, 1, 1\right)^\top \qquad \mbox{and} \qquad v_2 := \left(0, \frac{1}{\beta^2}, \frac 1 \beta, 1\right)^\top,
\]
and therefore given by
\begin{subequations}\label{eigen_32n3}
\begin{align}
\frac{(1-\beta) ((d+1)n-3)}{2 (3-n) n} x + \beta F - F^\prime &= \beta, \label{eigen_32n3_1}\\
\frac{(1-\beta^2) ((d+1)n-3)}{2 (3-n) n} x + \beta^2 F - F^{\prime\prime} &= \beta^2. \label{eigen_32n3_2}
\end{align}
\end{subequations}
In particular, we can write $F^\prime$ as a graph of $x$ and $F$ on $W^\txtu(p)$, that is, we can write
\begin{equation}\label{graph_32n3}
\frac{\d u}{\d s} = G(x,u) \quad \mbox{with } \, u := F-1,
\end{equation}
where $G = G(x,u)$ is an analytic function in a neighborhood of $(x,u) = (0,0)$ with $G(0,0) = 0$. Additionally knowing that $\frac{\partial G}{\partial u}(0,0) = \beta$ (this can be read off from \eqref{eigen_32n3_1}), we can recast \eqref{graph_32n3} as
\begin{equation}\label{graph_32n3_2}
\left(\frac{\d}{\d s} - \beta\right) u = G(x,u) - \beta u.
\end{equation}
Then one may replace the operator $\frac{\d}{\d s} = x \frac{\d}{\d x}$ by $x_1 \partial_{x_1} + \beta x_2 \partial_{x_2}$ and $u = u(x)$ by $v = v\left(x_1,x_2\right)$ and solve instead of \eqref{graph_32n3_2} the unfolded equation
\[
\left(x_1 \partial_{x_2} + \beta x_2 \partial_{x_2} - \beta\right) v = G\left(x_1,v\right) - \beta v,
\]
leading to the fixed-point problem
\begin{equation}\label{32n3_unfold}
v(x_1,x_2) = - b x_2 + \int_0^1 r^{-\beta} \left(G\left(r x_1, v\left(r x_1 r^\beta x_2\right)\right) - \beta v\left(r x_1 r^\beta x_2\right)\right) \frac{\d r}{r},
\end{equation}
where $b > 0$ is a real parameter. This fixed-point problem can be solved by applying the contraction-mapping theorem. We refer to Section~\ref{ssec:pw2} or \cite[\S 3.,4]{ggo.2013} for more details in analogous cases.
\end{proof}

%
\bibliography{tfe_dynsys} 
\bibliographystyle{plain} 

\end{document}